\documentclass[12pt]{article}
\title{\NTP{} topological structures}
\author{Pablo And\'ujar Guerrero}
\date{}

\usepackage{amsmath, amssymb, amsthm, mathtools}  % need for subequations, mathtools for \prescript
\usepackage{fullpage} 	% without this, will have wide math-article-paper margins
\usepackage{hyperref}
\usepackage{centernot}
\usepackage{xcolor}
\usepackage{bm} %for bold math
\usepackage[shortlabels]{enumitem} % for (a) labels in enumerate
\usepackage[normalem]{ulem} % to cross out with \sout

\newcommand{\Bb}{\mathcal{B}}
\newcommand{\Cc}{\mathcal{C}}

\newcommand{\Kk}{\mathcal{K}}
\newcommand{\Mm}{\mathcal{M}}
\newcommand{\Nn}{\mathcal{N}}

\newcommand{\Ff}{\mathcal{F}}
\newcommand{\Dd}{\mathcal{D}}
\newcommand{\Gg}{\mathcal{G}}

\newcommand{\Uu}{\mathcal{U}}

\newcommand{\Rr}{\mathcal{R}}

\newcommand{\Yy}{\mathcal{Y}}

\newcommand{\dF}{\ensuremath{F_\sigma^{\text{def}}}}
\newcommand{\dG}{\ensuremath{G_\delta^{\text{def}}}}
\newcommand{\BP}{\textbf{SHB}}

\newcommand{\Ralg}{\mathbb{R}_{\text{alg}}}
\newcommand{\R}{\mathbb{R}}

\newcommand{\BU}{\Bb} %Basis uniformity

\newcommand{\N}{\mathbb{N}}
\newcommand{\Z}{\mathbb{Z}}
\newcommand{\Q}{\mathbb{Q}}
\newcommand{\Qp}{\mathbb{Q}_p}

\newcommand{\Msh}{\Mm^{\text{Sh}}}
\newcommand{\Rshe}{\Rr^{\text{Sh}}}

\newcommand{\NIP}{\ensuremath{\mathrm{NIP}}}
\newcommand{\NTP}{\ensuremath{\mathrm{NTP}_2}}
\newcommand{\TP}{\ensuremath{\mathrm{TP}_2}}

\newcommand{\Mo}{\Mm^o}% open core 
\newcommand{\Ro}{\Rr^o}
\newcommand{\No}{\Nn^o}

\newcommand{\partn}[1]{\partial^{(#1)}}
\newcommand{\X}[1]{X^{(#1)}}

\newtheorem{theorem}{Theorem}[section] % numbered like the section
\newtheorem{lemma}[theorem]{Lemma}
\newtheorem{proposition}[theorem]{Proposition}
\newtheorem{corollary}[theorem]{Corollary}
\newtheorem{claim}{Claim}[theorem]
\newtheorem{fact}[theorem]{Fact}

\newtheorem{thmA}{Theorem}

\theoremstyle{definition}
\newtheorem{definition}[theorem]{Definition}
\newtheorem{example}[theorem]{Example}
\newtheorem{remark}[theorem]{Remark}
\newtheorem{question}[theorem]{Question}

\theoremstyle{remark}

\newenvironment{claimproof}[1][\proofname]
               {
                 \proof[#1]
                 
               }
               {
                 \endproof
               }

\begin{document}

\maketitle 

\begin{abstract}
A subset of a topological space is constructible if it is a finite Boolean combination of closed sets. We prove that every $\NTP$ expansion of $(\R,<,+)$ by constructible sets defines only constructible sets, and that definable functions are generically piecewise continuous. The result also holds for all $\NTP$ expansions of $(\Q_p,+,\cdot)$, and all $\NTP$ definably complete expansions of ordered groups. In the latter case, the structure is generically locally o-minimal, has definable choice, and carries a well-behaved notion of naive topological dimension. For $\NIP$ uniform topological structures, constructibility of definable sets is preserved in the Shelah expansion. We classify strong expansions of $(\R,<,+)$ by constructible sets, and obtain results on $\NTP$ d-minimal structures. 
\end{abstract}

%TODO: address and affiliation.

\noindent
{\small \emph{Mathematics Subject Classification 2020.} 03C45, 03C64 (Primary); 54H05 (Secondary). \\
\emph{Key words.} Neostability, $\NTP$, tame topology, constructible sets, open core.} 

\section{Introduction}

%This paper advances two related programs in topological model theory. On one hand, it delves into the exploration of tameness beyond o-minimality, centered mostly on expansions of the real line, known as \emph{Miller's program}~\cite{MS99, miller05-tame, DMS10}. On the other hand, it contributes to the recent breakthroughs in the investigation of the connections between neostability or tame combinatorics ---a model-theoretic setting put forward by Shelah--- and topological tameness~\cite{DG17, HW18, Wal22}, with a focus on general topological structures whose topology is generated by a definable uniformity.

This paper advances two closely related programs at the interface of topology and model theory. On one hand, it forwards the exploration of first-order structures over the real ordered group $(\R,<,+)$ known as \emph{Miller's program}~\cite{MS99, miller05-tame, DMS10}, which aims to identify meaningful dividing lines among these structures, governing the topological and geometric behavior of definable sets and functions. On the other hand, it contributes to the recent breakthroughs in the investigation of the connections between neostability ---Shelah's model-theoretic framework of tame combinatorics--- and topological tameness~\cite{DG17, HW18, Wal22}, with a focus on general topological structures whose topology arises from a definable uniformity. 
%Roughly speaking we show that, under mild combinatorial tameness assumptions, definable sets and functions in a broad class of topological structures are subject to strong topological constraints.

The main axiom of neostability that we will work with is the tree property of the second kind ($\NTP$), a setting generalizing both simple and $\NIP$ theories. A subset of a topological space is constructible if it is a finite Boolean combination of closed sets. Our first main result, Theorem~\ref{thm:main-intro}, shows that every $\NTP$ expansion of $(\R,<,+)$ or $(\Q_p, +, \cdot)$ by constructible sets defines only constructible sets. This improves previous work of Walsberg in the $\NIP$ setting~\cite{Wal22}, and answers the $\NTP$ case of what is known as \emph{the main conjecture on expansions of the real field}. This conjecture postulates that an expansion of $(\R,+,\cdot)$ by constructible sets defines a dense and codense subset of $\R$ iff it defines $\Z$. Our result yields that, if one such expansion is $\NTP$, then it cannot define a dense and codense subset of $\R$.

We call a structure $\Mm=(M,\ldots)$ a uniform (topological) structure if $M$ is a (Hausdorff) uniform space induced by a uniformity on $M\times M$ that admits a basis given by a definable family of sets. Our main examples of uniform structures are provided by expansions of topological groups. Given the induced topology on $M$ (and product topology on $M^n$), we say that $\Mm$ is constructible if every definable set is constructible. The open core $\Mo$ of $\Mm$ is defined as the reduct $\Mo$, in the sense of definability, generated by all definable closed sets (equivalently by all definable constructible sets). In particular, every continuous function $f:M^n \rightarrow M^m$ definable in $\Mm$ is definable in $\Mo$. In this framework, we prove the following. See Section~\ref{sec:prelim} for definitions. 

\begin{thmA}\label{thm:main-intro}
Let $\Mm$ be an $\NTP$ definably $\sigma$-compact uniform structure. Then the open core $\Mo$ is constructible. In particular, the following structures have constructible open core:
\begin{enumerate}
    \item  Any $\NTP$ definably complete expansion of an ordered group. For example, any $\NTP$ expansion of $(\R,<,+)$.
    \item For any prime $p$, any $\NTP$ expansion of the field of $p$-adic numbers $(\Q_p,+,\cdot)$.
\end{enumerate}
\end{thmA}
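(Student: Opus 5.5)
The plan is to reduce constructibility of $\Mo$ to the statement that no definable closed set is "nowhere-locally-closed", and to use $\NTP$ to rule this out. A subset $X\subseteq M^n$ is constructible iff the transfinite sequence $X \supseteq \partn{1}X \supseteq \partn{2}X\supseteq\cdots$ reaches $\emptyset$ after finitely many steps. Here $\partial X = \overline{X}\setminus X$ and $\partn{k+1}X=\partial(\partn{k}X)$, with the usual convention on iterated frontiers of the locally closed part. Since $\Mo$ is generated by closed sets, every $\Mo$-definable set is a projection of a Boolean combination of definable closed sets. I will therefore prove two things in $\Mm$:
\begin{enumerate}[(i)]
\item every definable set $X$ with $\overline{X}$ definable in $\Mo$ that has nonempty interior in $\overline{X}$, in a suitable uniform sense, is locally closed on a dense open subset of $\overline{X}$;
\item the class of constructible definable sets is closed under projections.
\end{enumerate}
Given (i) and (ii), a standard induction on the complexity of formulas in the open-core language shows that every $\Mo$-definable set is constructible.

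The heart of the argument is a key lemma: \emph{an $\NTP$ definably $\sigma$-compact uniform structure defines no set $D\subseteq M^n$ that is dense and codense in some nonempty open box (basic neighbourhood)}, where $D$ is obtained from constructible data. The proof goes by contradiction. Suppose $D$ and $U\setminus D$ are both dense in an open $U$. Using the definable basis $\{B_t\}$ of the uniformity, build an array of formulas $\varphi(x;t,a) := x\in D\cap B_t[a]$. Density and codensity let us choose, in an $\aleph_1$-saturated extension, rows of pairwise disjoint small balls $B_{t_i}[a_{i,j}]$ along a shrinking sequence of scales. Each ball of row $i+1$ must be placed inside a ball meeting $D$ chosen from row $i$, so that every path meets $D$ while each row is $2$-inconsistent. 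This gives $\TP_2$, a contradiction.

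Definable $\sigma$-compactness enters in two places. It makes "dense in an open set" a definable, Baire-category-like notion: a definable increasing union of compact sets cannot be nowhere dense everywhere. It also guarantees that the nested families used in the array have nonempty intersections, so that paths are consistent. Granting the key lemma, apply it to $\partn{k}X$ inside $\overline{\partn{k}X}$. At each stage the frontier is nowhere dense in the previous closure, so $\dim$-type (or rank-type) considerations force the sequence to terminate. Here the rank is given by a $\sigma$-compactness-based Cantor–Bendixson argument that is uniform in parameters. Uniformity together with compactness of the theory then yields a finite bound.

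For the ``in particular'' clauses, I only need to verify the hypothesis in each case. A definably complete expansion of an ordered group is definably $\sigma$-compact via the family of closed intervals $[-t,t]$, and its order-topology uniformity is given by the intervals $(-t,t)$. For $(\Q_p,+,\cdot)$, balls $p^n\Z_p$ give a definable uniformity, and $\Q_p=\bigcup_n p^{-n}\Z_p$ is a definable family of compact sets. I expect the main obstacle to be the interaction between the $\TP_2$ array construction and passing to elementary extensions. Compactness of closed bounded sets is not first-order, so I would have to phrase definable $\sigma$-compactness so that it transfers and still yields consistency of the infinitely many nested balls along paths.
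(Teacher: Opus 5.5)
Your proposal has a genuine gap at its core: the \TP{} configuration you describe is not an inp-pattern, and your key lemma targets the wrong sets.

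\textbf{The array is a tree, not an inp-pattern.} Suppose the balls in each row are pairwise disjoint and every ball of row $i+1$ lies inside some ball of row $i$. Then any path that picks a row-$i$ ball $B$ and a row-$(i+1)$ ball lying in a different row-$i$ ball is inconsistent, since that second ball misses $B$. What you get is a tree. Nested disjoint balls already produce this in the o-minimal (hence \NIP) structure $(\R,<,+)$, so no contradiction with \NTP{} can come from it.

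The construction that works (Lemma~\ref{lem:noise-NTP2}) uses \emph{shells around closed sets}, not balls around points. Row $i$ is $\{(U_{i,j}\setminus U_{i,j+1})[C_i] : j\leq m\}$ for one closed set $C_i$ and a decreasing chain of entourages, so each row is automatically pairwise disjoint. All paths are made consistent as follows. Choose $C_n$, from an upward directed definable family of closed sets whose union is dense in $Z$, so that it meets each of the finitely many relatively open sets $W_\eta$ produced by the earlier rows. Then use that $C_n$ is closed with empty interior in $Z$ to pick the entourages so that every shell of $C_n$ meets every $W_\eta$ in a set with nonempty interior in $Z$.

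Path consistency is only finite consistency, so no compactness is involved here. Nor can $\sigma$-compactness supply the Baire-type principle you ascribe to it: $(\R,+,\cdot,\Z)$ is definably $\sigma$-compact, yet $\Q=\bigcup_t\{p/q : p,q\in\Z,\ q\neq 0,\ |p|,|q|\leq t\}$ is a definable upward directed union of finite sets that is dense in $\R$. That principle (\BP, Proposition~\ref{prop:BCT}) comes from \NTP{} alone.

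\textbf{The key lemma excludes the wrong sets, and the projection step is missing.} Ruling out sets dense and codense in an open box is mere noiselessness. Even relativized to the closed sets $cl(\partn{k}X)$, it only excludes definable witnesses. In general a non-constructible definable $X$ need not be dense and codense in any definable set. What is true (Lemma~\ref{lem:cons-noise1}) is that, in an $\omega_1$-saturated extension, $X$ is dense and codense in the type-definable set $\bigcap_n cl(\partn{n}X)$. This is why the \NTP{} lemma must hold for an arbitrary $Z$.

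Your closing step via ``rank-type considerations'' has nothing behind it. A long chain of closed sets, each nowhere dense in the previous one, yields only a deep inp-pattern (Lemma~\ref{lem:closed-chain}), which contradicts strongness but not \NTP.

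You also never say how the key lemma gives (ii), closure under projections. The missing idea is the class of $\dF$ sets:
\begin{enumerate}
\item In a uniform structure, every constructible definable set is an upward directed definable union of closed sets (Lemma~\ref{lem:cons-dF}).
\item Definable $\sigma$-compactness is used only to show that a projection of such a union is again one: continuous images of definably compact sets are definably compact, hence closed.
\item \NTP{} forces every $\dF$ set $X$ to be constructible. Apply the shell lemma to $Z=\bigcap_n cl(\partn{n}X)$ and the witnessing closed family. Its members lie in $X$ and are therefore codense in $Z$, while their union $X$ is dense in $Z$.
\end{enumerate}
With this in place, your item (i) is superfluous. The ``in particular'' clauses are fine, modulo a short proof that closed bounded intervals are definably compact in a definably complete structure.
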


We prove Theorem~\ref{thm:main-intro} by introducing a new notion of being ``definably $F_\sigma$", which we denote $\dF$, and by showing that the $\NTP$ assumption implies that definable sets are $\dF$ iff they are constructible. Given this result, the rest of the proof generalizes ideas present in~\cite{MS99, DMS10}. Specifically, our notion of definable $\sigma$-compactness ensures that $\dF$ sets are closed under projections, and from this and the aforementioned equivalence it is easy to see that definable constructible sets form a structure, namely the open core.

Our second main result further explores the assumptions in Theorem~\ref{thm:main-intro}, improving, in this context, the conclusion of a theorem of Jayne and Rogers~\cite{Jayne-Rogers} on first level Borel functions. Specifically, it establishes that definable functions in the open core are both generically and (finitely) piecewise continuous. 

\begin{thmA}\label{thm:f-intro}
Let $\Mm$ be an $\NTP$ definably $\sigma$-compact uniform structure. Let $f:X\rightarrow Y$ be a function definable in the open core. Let $X_0=X$ and, for every $n<\omega$, let $X_{n+1}$ be the closure in $X_n$ of the set of points of discontinuity of $f|_{X_n}$. Then:
\begin{enumerate}
     \item $X_j$ has empty interior in $X_i$ for every $i<j$ (generic continuity).
     \item There exists an $m$ such that $X_m=\emptyset$ (piecewise continuity).  
\end{enumerate}
\end{thmA}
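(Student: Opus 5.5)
Since $f$ is definable in $\Mo$, Theorem~\ref{thm:main-intro} makes $\Mo$ a constructible structure: every set definable in $\Mo$, in particular the graph of $f$ and the preimages $f^{-1}(U_a)$ of uniformly definable basic open sets, is constructible. The plan is to reduce both items to two standard facts about constructible sets in a Hausdorff space.

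\emph{Fact (a).} A constructible set is locally closed on a dense open subset of its closure. Equivalently, it has a finite ``locally closed decomposition'' $A=\bigcup_{k}(C_{2k}\setminus C_{2k+1})$, with $C_0\supseteq C_1\supseteq\cdots$ closed and the chain of length bounded by the Boolean complexity.

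\emph{Fact (b).} Any constructible set $A$ satisfies that $\partial A$ (more precisely $\overline{A}\setminus \mathrm{int}(A)$ relative to an ambient locally closed set) is nowhere dense.

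\textbf{Generic continuity (item 1).} Fix $i$; it suffices to show that $X_{i+1}$ has empty interior in $X_i$, since the $X_j$ decrease. Write $D_i$ for the set of discontinuity points of $f|_{X_i}$. The set $D_i$ is definable in $\Mo$ (using the definable uniform basis), hence constructible, and $X_{i+1}=\overline{D_i}\cap X_i$. Suppose, towards a contradiction, that $\overline{D_i}\cap X_i$ contains a nonempty open subset $V$ of $X_i$. Using Fact (a) applied to $D_i$, we can shrink $V$ so that $D_i\cap V$ is open in $X_i$, i.e.\ $f|_{X_i}$ is discontinuous at every point of an open set $V$.

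Now use definable $\sigma$-compactness together with a Baire-type argument. By a uniform basis entourage $U$, every $x\in V$ admits some basic $W$ such that $f^{-1}(U[f(x)])$ fails to be a neighbourhood of $x$. The relevant sets are constructible, so Fact (b) says each $f^{-1}(U_a)$ has nowhere dense boundary in $X_i$. Covering $Y$ by a definable $\sigma$-compact family of small balls and using that $V$ is not a definable countable-like union of nowhere dense sets, we locate a point of continuity in $V$, contradicting the choice of $V$.

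The main obstacle is exactly this definable Baire property: ruling out that a definable open set is an increasing definable union of nowhere dense sets. Here I would invoke the $\NTP$ machinery from the $\dF$ analysis, which shows that definable $\dF$ sets are constructible. An increasing definable family of closed nowhere dense sets has a $\dF$ union, which is therefore constructible; a constructible set with dense interior-free behaviour cannot cover an open set. That is what I would try first.

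\textbf{Piecewise continuity (item 2).} Item 1 gives a strictly decreasing chain of closed-in-each-other constructible sets, each nowhere dense in the previous one. To bound its length, note that the chain is uniformly definable in $\Mo$ from $f$. If it did not terminate, then by compactness we would obtain, in an elementary extension, definable sets $X_n$ of arbitrarily large ``nowhere dense depth''. This contradicts a uniform bound on the Boolean/locally-closed complexity of constructible sets in a definable family, which follows by compactness from constructibility of $\Mo$: a definable family of constructible sets has uniformly bounded complexity, otherwise some non-constructible set would be definable in a saturated model. Since each $X_{n+1}$ is nowhere dense in $X_n$, the sets $X_n\setminus X_{n+1}$ realize a locally closed decomposition of depth $n$ of a fixed set built from $X$. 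That depth is bounded, so $X_m=\emptyset$ for some $m$.
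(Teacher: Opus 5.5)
Your proposal has a genuine gap in both items, and in each case it sits exactly at the step you identify as the main obstacle.

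\textbf{Item 1.} The family you want to feed into a Baire argument is $\{f^{-1}(U[y]) : y\in Y\}$, or the boundaries of these sets. That family is not upward directed, and a definable family of nowhere dense sets can perfectly well cover an open set (singletons do). The classical Baire-class-one argument relies on a countable base of $Y$, and there is no definable substitute for it. Your covering of $Y$ by small balls therefore never produces an upward directed family of closed subsets of $X$.

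Your proposed source for the Baire property is also a non sequitur. Knowing that an upward directed union of closed nowhere dense sets is $\dF$, hence constructible, says nothing against that union containing an open set. The paper actually argues in the other direction: it derives ``$\dF$ implies constructible'' from $\BP$. What you need is Proposition~\ref{prop:BCT} ($\NTP$ implies $\BP$), which comes directly from the $\TP$ pattern of Lemma~\ref{lem:noise-NTP2}.

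You also need the right directed family, and the paper extracts it from the graph. Since $\Gamma(f)$ is constructible, it is $\dF$ (Lemma~\ref{lem:cons-dF}). Hence it is covered by an upward directed definable family of definably compact sets $C_b\subseteq\Gamma(f)$ (Lemma~\ref{lem:compdF}). The projections $\pi(C_b)$ are closed and cover $X$, and a function with definably compact graph is continuous. So if $f$ were nowhere continuous on a nonempty open subset of $X$, $\BP$ would give some $\pi(C_b)$ with nonempty interior there, and $f$ is continuous on it. This is where definable $\sigma$-compactness genuinely enters.

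\textbf{Item 2.} Each $X_n$ is definable, but not uniformly in $n$: the defining formula grows with $n$. So there is no definable family whose uniform complexity bound the chain could violate.

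Moreover, no bound on the Boolean complexity of any fixed set controls the length of a chain of relatively closed sets, each nowhere dense in its predecessor. Take a countable closed $S\subseteq\R$ with $S^{[n]}\neq\emptyset$ for all $n$. Define $f=1/d(x,S)$ off $S$, $f=1/d(x,S^{[n+1]})$ on $S^{[n]}\setminus S^{[n+1]}$, and $f=0$ on $\bigcap_n S^{[n]}$. This $f$ has closed graph, and $X_{n+1}=S^{[n]}$ for every $n$, so the chain never terminates. Yet $X$, $\Gamma(f)$ and every $X_n$ are open or closed. Constructibility of $\Mo$ alone therefore cannot give termination.

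The paper's argument is as follows:
\begin{enumerate}
\item Pass to an $\omega_1$-saturated model.
\item Apply $\BP$ to the \emph{type-definable} set $K=\bigcap_n X_n$ and the closed family $\{\pi(C_b)\}$. This is precisely why $\BP$ is stated for arbitrary $Z$.
\item Obtain $x\in K$ and $U$ with $K\cap U[x]\subseteq\pi(C_b)$.
\item By saturation, get $X_n\cap U[x]\subseteq\pi(C_b)$ for some $n$.
\item Conclude that $f|_{X_n}$ is continuous near $x$, contradicting $x\in X_{n+1}$.
\end{enumerate}
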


Both Theorems~\ref{thm:main-intro} and~\ref{thm:f-intro} can be derived from a condition apparently weaker than $\NTP$, which we call \emph{strongly hereditarily Baire (\BP)}. In the case of expansions $\Rr$ of $(\R,+,\cdot)$, we are able to recast the previous two theorems into a dichotomy of descriptive set theoretic flavour (Theorem~\ref{thm:Baire-class-one}). That is, either the open core $\Ro$ is constructible and every function definable in it is generically piecewise continuous, or otherwise $\Ro$ defines a first level Borel (in particular Baire class one) function that is not a finite union of continuous functions.

We investigate the smaller class of $\NIP$, rather than $\NTP$, uniform structures. Many $\NIP$ classes of topological structures (e.g. o-minimal, P-minimal) are known to be constructible. Given a structure $\Mm$, its Shelah expansion is the expansion of $\Mm$ generated by all externally definable sets. Shelah expansions are central objects of study in $\NIP$ model theory. %Many $\NIP$ uniform structures are known to be constructible, including the open core of the structures described in Theorem~\ref{thm:main-intro}. 

\begin{thmA}\label{thm:nip-intro}
Let $\Mm$ be an $\NIP$ uniform structure. If $\Mm$ is constructible, then its Shelah expansion $\Msh$ is constructible too. 
\end{thmA}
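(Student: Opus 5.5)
The plan is to reduce to externally definable sets and then to control their constructibility through honest definitions.

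\textbf{Reduction.} By Shelah's theorem on NIP theories, the definable sets of $\Msh$ are exactly the externally definable sets $X=\varphi(N,b)\cap M^n$, where $N\succ\Mm$ is $|M|^+$-saturated and $b\in N$. So it suffices to show that every such trace $X$ is constructible in the topology of $M$.

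\textbf{Why the naive argument fails, and the basic tool.} One cannot simply take a decomposition of $\varphi(N,b)$ into closed sets of $N$ and intersect with $M^n$. The $N$-topology has infinitesimal entourages $U_c$ with $c\in N\setminus M$, so the trace on $M^n$ of an $N$-closed set need not be $M$-closed. Instead I would use a finite-rank characterization of constructibility. For $Y\subseteq M^n$, set $\partn{0}(Y)=\overline{Y}$ and $\partn{k+1}(Y)=\overline{\partn{k}(Y)\setminus Y'_k}$, where $Y'_k$ is the relative interior of $Y$ or of its complement at that stage; this is the frontier-descent of the Hausdorff difference hierarchy. Then $Y$ is constructible iff this sequence reaches $\emptyset$ after finitely many steps.

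The basis of entourages is a definable family with parameters in $M$. Hence closure and relative interior are first-order definable operations, uniformly in parameters. Consequently, for each $k$, the condition ``$\varphi(x,y)$ has rank $\le k$'' is expressed by a formula $\rho_k(y)$, and the same formula applied in $\Msh$ computes the rank of externally definable sets.

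\textbf{Key steps, in order.}
\begin{enumerate}
\item Show that constructibility of $\Mm$ gives a \emph{uniform} bound $k_\psi$ on the rank of all instances $\psi(M,d)$, $d\in M$, of each formula $\psi$. I would derive this from NIP together with compactness: if the ranks were unbounded, a Baire-type nesting of frontiers should produce an array witnessing the independence property. Once the bound holds, it is the elementary statement $\forall y\,\rho_{k_\psi}(y)$, so it transfers to $N$.
\item Apply honest definitions (Chernikov--Simon) to $X$. They give an $L$-formula $\psi(x,z)$ such that for every finite $A\subseteq X$ and finite $B\subseteq M^n\setminus X$ there is $d\in M$ with $A\subseteq\psi(M,d)$ and $\psi(M,d)\cap B=\emptyset$.
\item Show that the rank of $X$ is at most $k_\psi$. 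Suppose $\partn{k_\psi}(X)\neq\emptyset$. Nonemptiness at stage $k$ is witnessed by a finite configuration: a point together with finitely many points of $X$ and of its complement, inside nested $M$-basic neighbourhoods, arranged alternately. An instance $\psi(M,d)$ that agrees with $X$ on this finite configuration would then have $\partn{k_\psi}(\psi(M,d))\neq\emptyset$, contradicting step~1.
\end{enumerate}

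\textbf{Main obstacle.} I expect step~3 to be the hardest part, specifically the claim that nonemptiness of $\partn{k}$ is witnessed by finitely many points. Closures involve the universal quantifier ``for every entourage $U_c$,'' so one point per level does not suffice. I would first try to replace that quantifier by a single sufficiently small entourage at each level, chosen after $X$ is fixed, so that the witnessing set becomes finite. Step~1 is the second delicate point. If the uniform rank bound resists a direct independence argument, I would try to deduce it from the equivalence between $\dF$ and constructibility that the paper proves.
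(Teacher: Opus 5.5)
\textbf{There is a genuine gap in step~3.} It is more than the technical obstacle you flag: the inequality you want there is false. Take $\Mm=(\R,<,+)$, which is $\NIP$ and constructible. Let $X=(0,1]$ and let $\psi(x;z_1,z_2)$ be $z_1\le x\le z_2$. Then $X$ is the trace of $[c,1]$ for an infinitesimal $c>0$. Moreover $\psi$ satisfies your step~2, and even the genuine Chernikov--Simon property: $A\subseteq[\min A,1]\subseteq X$ for every finite $A\subseteq X$. Every instance of $\psi$ is closed, so step~1 gives $\partial\psi(M,d)=\emptyset$ for all $d$, and step~3 would conclude $\partial X=\emptyset$. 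But $\partial X=\{0\}$.

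Frontier points of a trace are produced in the limit and need not be frontier points of any instance. The instance $\psi(M,d)$ is chosen after your finite configuration, so it can be sparse below any entourage you fix in advance. Hence no finite configuration, even with one small entourage per level, forces $\partn{k}\psi(M,d)\neq\emptyset$.

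Two related points. First, the version of honest definitions you state (agreement with $X$ on finite $A$ and $B$) is trivial: it holds with $\psi=\varphi$ in any structure, so your argument never really uses $\NIP$, which should itself be a warning sign. Second, step~1 needs no $\NIP$ either. The uniform bound follows from compactness plus the fact that constructibility is preserved under elementary equivalence (Remark~\ref{rem:extensions}).

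\textbf{The missing idea.} What passes from the instances to $X$ is not a rank bound but the property of being a directed union of closed sets. That property yields constructibility only through an $\NTP$ Baire-category argument carried out in $\Msh$. The paper proceeds in three steps.
\begin{enumerate}
\item It uses the honest part of honest definitions, $F\subseteq\varphi(M,b)\subseteq X$ for finite $F\subseteq X$.
\item It combines this with the uniform $\dF$ witnesses available in any constructible uniform structure (Lemma~\ref{lem:cons-dF} and Remark~\ref{rem:dF-uniform}). This gives one formula $\psi(x,y,z)$ such that every finite $F\subseteq X$ lies in some closed $\psi(M,b,d)\subseteq X$.
\item Since $\Msh$ is $\NIP$, hence $\NTP$, Lemma~\ref{lem:NIP-Fsigma} applies. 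Suppose $X$ were not constructible. In an $\omega_1$-saturated extension, $X$ is dense and codense in $Z=\bigcap_n cl(\partn{n}X)$. The closed instances lying inside $X$ are then codense in $Z$, yet meet any finitely many nonempty relatively open subsets of $Z$. Lemma~\ref{lem:noise-NTP2} then produces a $\TP$ pattern, a contradiction.
\end{enumerate}
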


Motivated by Theorem~\ref{thm:main-intro}, we investigate constructible expansions of ordered groups, and specifically of $(\R,<,+)$. A linearly ordered structure $\Mm$ is definably complete if every bounded nonempty unary definable set has a supremum in $M$. In this context, we establish the following.

\begin{thmA}\label{thm:cons-tame-top}
Let $\Mm=(M,<,+,\ldots)$ be a constructible definably complete expansion of an ordered group. Then:
\begin{enumerate}
    \item $\Mm$ is generically locally o-minimal (Proposition~\ref{prop:generic-local-o-min}).
    \item  $\Mm$ has definable choice (Proposition~\ref{prop:choice}).
    %\item If $M=\R$, then the topological dimension\footnote{That is, small inductive dimension, large inductive dimension, or Lebesgue covering dimension, all of which are identical for subsets of $\R^n$.} of every definable set coincides with its naive dimension (Proposition~\ref{prop:dim}).
\end{enumerate}
\end{thmA}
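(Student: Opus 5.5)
For (1), I will use the standard characterisation of local o-minimality through unary sets. Since $\Mm$ is constructible, every definable $X\subseteq M$ is a finite Boolean combination of closed definable sets. A closed definable subset of $M$ with empty interior is nowhere dense, and its complement is open. So $X$ and $\partial X$ differ from an open set by a nowhere dense closed definable set.

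I will take ``generically locally o-minimal'' to mean the following. For every definable family $\{X_a\}$ of unary sets, there is a definable open dense $U\subseteq M$ such that for every $x\in U$ and every $a$, some neighbourhood of $x$ meets $X_a$ in a finite union of points and intervals. A uniform version in $a$ may be required. The key steps, in order, are as follows.

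(i) Every definable unary set has nowhere dense boundary. This holds because the boundary of a constructible set is constructible with empty interior. Constructible sets with empty interior are nowhere dense: a finite union of locally closed sets with empty interior is nowhere dense, by induction on the length of the Boolean combination using definable completeness (definable Baire property).

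(ii) For fixed $a$, every $x\notin \partial X_a$ has a neighbourhood on which $X_a$ is either empty or full. So local o-minimality holds off the closed nowhere dense set $\partial X_a$.

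(iii) For families, I will consider $D=\bigcup_a \partial X_a$. This set is definable, hence constructible. I need to show it is nowhere dense, or at least that its complement contains a dense open set after a suitable modification. If $D$ had interior, then constructibility would make $D$ contain an open interval $I$. I would then try to show that $\{(a,x): x\in\partial X_a\}$ projects onto $I$. By definable completeness and the definable choice that is still to be established, this would give a definable function witnessing a failure of Baire category. Alternatively, I would use that in definably complete structures a definable increasing union of nowhere dense sets is nowhere dense, if the paper has such a fact for $\dF$ sets.

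This uniformity step is the main obstacle. If it fails, I will settle for the pointwise version, i.e.\ generic local o-minimality for each single definable set, which (ii) already gives.

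For (2), I will show definable choice for definable families $\{X_a\}$ of unary sets and then induct on the ambient dimension via coordinate projections. For nonempty unary $X_a$, constructibility gives that either $X_a$ has interior, or $X_a$ is nowhere dense, hence closed relative to a definable locally closed decomposition. In the interior case, I will pick a canonical point. Let $b=\inf$ of the left endpoints of open intervals inside $X_a$, which exists by definable completeness when the set is bounded below; otherwise use $\sup$ or a translate by $0$. I will then take a point definably from the first maximal interval, such as its midpoint, or $b+1$ in unbounded cases, using group structure. In the nowhere dense case, I will write $X_a$ as a finite union of sets $C\setminus C'$ with $C,C'$ closed, uniformly in $a$. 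Taking a component $C_a\setminus C'_a$ that is nonempty, I will choose the point of least $|x|$ if it is attained. When it is not attained, I will use the closest point to $0$ in $C_a$ relative to the open complement of $C'_a$, employing the fact that closed definable sets contain their infima. I expect a careful case split ensuring the chosen infimum lies outside $C'_a$ to be routine but fiddly.
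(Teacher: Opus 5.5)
\textbf{Part (1) has a genuine gap: you prove a different, much weaker property.}

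In this paper, generic local o-minimality asks, for each definable $X\subseteq M$, for a definable $Y\subseteq X$ that is open and dense \emph{in $X$}, such that every point of $Y$ is an interior point or an isolated point of $X$. Your version (a dense open $U\subseteq M$ off which $X$ is locally empty or full) only restates noiselessness. Steps (i)--(ii) say nothing about the points of $X$ lying in its boundary, and for nowhere dense $X$ that is all of $X$. So the real content is never addressed: that a constructible definably complete structure defines no nonempty set with neither interior nor isolated points. Meanwhile the uniformity over families in (iii), which you call the main obstacle, is not needed at all.

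The missing argument runs as follows.
\begin{itemize}
\item Let $Y$ be the set of interior and isolated points of $X$. If $Y$ is not dense in $X$, replace $X$ by $X\setminus cl(Y)$, which is nonempty and has no interior and no isolated points.
\item By constructibility $cl(X)$ has empty interior; otherwise $X$ would be dense and codense in it, contradicting Lemma~\ref{lem:cons-noise0}. So you may assume $X$ is closed.
\item Let $C_0$ (resp.\ $C_1$) be the set of $y\in X$ with $(y',y)\cap X=\emptyset$ (resp.\ $(y,y')\cap X=\emptyset$) for some $y'$. These sets are disjoint, because $X$ has no isolated points.
\item Both are dense in $X$. Given points $u<v$ of $X$, pick $z\in(u,v)\setminus X$. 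Then $\inf(X\cap[z,v])$ lies in $C_0$, by definable completeness and closedness; symmetrically for $C_1$.
\item Hence $C_0$ is dense and codense in $X$, contradicting constructibility.
\end{itemize}

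\textbf{Part (2): the reduction is fine, but the unary case is not carried out.}

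Reducing to families of unary sets by projecting and choosing coordinate by coordinate is legitimate here, since projections and fibres are constructible when $\Mm$ is. This is a different route from the paper, which treats $X_b\subseteq M^n$ directly. The paper's route needs only the fibres to be constructible, and that is what yields the strongly noiseless corollary.

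Your unary case has two problems.
\begin{itemize}
\item In the interior case there need not be a ``first maximal interval''. Components of $int(X_a)$ can accumulate at the infimum of their left endpoints, e.g.\ $\bigcup_{k\in\Z}(2^k,3\cdot 2^{k-1})$ in the d-minimal, hence constructible, structure $(\R,<,+,\cdot,2^\Z)$.
\item In the locally closed case, the situation where $\inf|x|$ is not attained is exactly the nontrivial point, not routine bookkeeping, and you give no mechanism for it.
\end{itemize}

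What you need is a nonempty closed subset of $X_b$, defined uniformly in $b$.
\begin{itemize}
\item If $X_b$ is not closed, put $Z=cl(\partial X_b)$. Constructibility gives $W=X_b\setminus Z\neq\emptyset$.
\item Let $r_1$ be the supremum of those $s>0$ with $W\setminus B(Z,s)\neq\emptyset$, or $r_1=1$ if this set is unbounded.
\item Then $X_b\setminus B(Z,r_1/2)=cl(X_b)\setminus B(Z,r_1/2)$ is closed and nonempty.
\item Choose the lexicographic minimum of its intersection with $\overline{B(0,r_0+1)}$, where $r_0$ is its distance to $0$.
\end{itemize}
This also makes your interior versus nowhere-dense case split unnecessary.
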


We also show, in Proposition~\ref{prop:dim}, that topological dimension is well-behaved. 

We then investigate the smaller class of strong, rather than $\NTP$, structures. In this setting we generalize the description of strongly dependent expansions of the real ordered group due to Walsberg~\cite{Wal22}. 

\begin{thmA}\label{thm:strong-intro}
Let $\Rr$ be a strong definably complete expansion of an ordered group. Then the open core $\Ro$ is locally o-minimal. 

If $\Rr$ is a strong expansion of $(\R,<,+)$, then $\Ro$ is either o-minimal or interdefinable with $(\R,<,+,\Bb, \alpha\Z)$, for some $\alpha>0$, where $\Bb$ is a collection of bounded Euclidean sets satisfying that $(\R,<,+,\Bb)$ is o-minimal. In particular $\Ro$ is constructible and has dp-rank $2$. 
\end{thmA}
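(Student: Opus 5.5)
Theorem~\ref{thm:strong-intro} has two parts, and I will take them in order. Strong theories are $\NTP$, so Theorem~\ref{thm:main-intro} applies: the open core $\Ro$ is constructible. Being a reduct of $\Rr$, $\Ro$ is also strong. Theorem~\ref{thm:cons-tame-top} then applies to $\Ro$, so $\Ro$ is generically locally o-minimal and has definable choice. This reduces everything to showing that a strong, constructible, definably complete expansion of an ordered group is \emph{locally} o-minimal, and not merely generically so.

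For the first part I argue by contradiction. Suppose $\Ro$ is not locally o-minimal. Then there is a definable unary set $X$ and a point $a$ at which $X$ fails to be a finite union of points and intervals on every neighbourhood of $a$. By constructibility, $X$ is a finite Boolean combination of closed sets. Hence some definable closed discrete-like set accumulates at $a$: the boundary of $X$ is closed, has empty interior, and has $a$ as an accumulation point. Take a definable closed set $D$ with empty interior that accumulates at $a$. Using the group structure, I translate and use definable choice to produce a uniformly definable family of such sets near $a$ at smaller and smaller scales. The plan is then to build an ict-pattern of infinite depth, which contradicts strength. Row $k$ of the pattern uses the formulas ``$x$ lies within distance $\epsilon_k$ of the $i$-th point of $D$ at scale $k$''. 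The bounded-alternation structure coming from constructibility, together with generic local o-minimality, should make distinct rows independent. The main obstacle is arranging this independence: the rows must be genuinely different scales. I would first handle the case where $D$ is discrete near $a$ and its points accumulate only at $a$. In that case the consecutive gaps define a function to $M$, and iterating it should give the infinitely many independent partitions, mimicking Walsberg's strongly dependent argument~\cite{Wal22}.

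For the second part, over $\R$, I apply the known classification of locally o-minimal expansions of $(\R,<,+)$. A definable closed discrete unbounded set $D\subseteq\R$ either exists or does not. If it does not, local o-minimality plus completeness of $\R$ gives o-minimality. If it does, I want to show that $D$ is eventually periodic, i.e.\ a finite union of cosets of some $\alpha\Z$ up to a bounded set. If $D$ were not, the difference set of $D$ would together with $D$ yield a dense-codense or rank-$3$ configuration. I would rule this out by showing that $(\R,<,+,\Z,\beta\Z)$ with $\beta\notin\Q$ is not strong (its dp-rank is infinite), so all discrete definable sets lie in a single $\alpha\Z$. Once $\alpha\Z$ is definable, every definable set decomposes as a union of translates, by elements of $\alpha\Z$, of bounded definable sets. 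This yields interdefinability with $(\R,<,+,\Bb,\alpha\Z)$, where $\Bb$ consists of the bounded definable sets. The o-minimality of $(\R,<,+,\Bb)$ follows from local o-minimality and compactness of closed bounded intervals. The dp-rank $2$ claim then follows from the known computation for $(\R,<,+,\alpha\Z)$ expanded by an o-minimal bounded structure.
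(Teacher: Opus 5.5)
\textbf{There is a genuine gap in both parts.}

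\textbf{First part.} Your reduction agrees with the paper. Strength gives $\NTP$, so $\Ro$ is constructible by Theorem~\ref{thm:main-intro}, and $\Ro$ is strong as a reduct. If local o-minimality fails at $a$, then $Y=cl(X)\setminus int(X)$ is closed and has empty interior, since constructibility gives noiselessness. It also accumulates at $a$: for $a<x<y$ in $X\cap I$ with $[x,y]\not\subseteq X$, the point $\inf([x,y]\setminus X)$ lies in $Y\cap I$.

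The step meant to produce the contradiction, however, is only a plan, and as described it would not give one.
\begin{itemize}
\item An ict-pattern of infinite depth contradicts strong dependence, not strength. $\Rr$ is not assumed $\NIP$; the Dolich--Goodrick example mentioned in Section~\ref{sec:prelim} is definably complete, of burden $2$, and has the independence property. The independence property alone already yields ict-patterns of every depth. You need an inp-pattern: rows that are $k$-inconsistent, with every path consistent.
\item You only treat the case where $Y$ is discrete near $a$, and even there the pattern is only sketched. Discreteness does not follow from $Y$ being closed with empty interior. Generic local o-minimality together with Lemma~\ref{lem:closed-chain}, applied to the Cantor--Bendixson derivatives of $Y$, would give you finite Cantor--Bendixson rank. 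That still allows a discrete set accumulating at $a$, and excluding this is exactly the nontrivial content.
\end{itemize}
The paper constructs no pattern here. It passes to a monster model and invokes \cite[Corollary 2.13]{DG17} (Fact~\ref{fct:DG-acc-point}): unary definable nowhere dense sets are discrete, and unary definable discrete sets have no accumulation points. Applied to $Y$, this gives the contradiction at once.

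\textbf{Second part.} The same problem recurs. No classification of all locally o-minimal expansions of $(\R,<,+)$ yields this dichotomy: $(\R,<,+,\Z)^{\#}$ is locally o-minimal but not of the stated form (Remark~\ref{rem:TP2-cons}). So strength must do the real work, and your sketch only asserts the outcome. It does not explain:
\begin{itemize}
\item why a non-eventually-periodic closed discrete set should produce a second lattice $\beta\Z$ or a ``rank-$3$ configuration'';
\item why all definable discrete sets then lie in a single $\alpha\Z$;
\item why every definable set is a union of $\alpha\Z$-translates of bounded definable sets.
\end{itemize}
Separately, infinite dp-rank of $(\R,<,+,\Z,\beta\Z)$ does not by itself show non-strongness without $\NIP$. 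Non-strongness does follow from Theorem~\ref{thm:main-intro}, since $\Z+\beta\Z$ is dense and codense.

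These claims are the content of Walsberg's theorem, which the paper uses as Fact~\ref{fct:strong-expansion}: an expansion of $(\R,<,+)$ is strong and noiseless iff it is o-minimal, or locally o-minimal and interdefinable with some $(\R,<,+,\Bb,\alpha\Z)$ as in the statement. Since $\Ro$ is strong and constructible, hence noiseless, the dichotomy follows directly (Corollary~\ref{cor:wal}). The dp-rank $2$ claim in the non-o-minimal case comes from \cite{wal22-preprint}.

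You should either invoke these external results or prove them. As written, both halves of your argument stop exactly where the real work begins.
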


Further results for strong uniform structures include showing, in Theorem~\ref{thm:strong-noise}, that these are constructible iff they are strongly noiseless (they do not define sets $Y\subseteq X$ where $Y$ is dense and codense in $X$), and that, among them, having constructible open core is a property maintained under elementary equivalence (Proposition~\ref{prop:cons-OC}). 

An open question in the classification of $\NTP$ (and even $\NIP$) expansions of $(\R,<,+)$ is whether these satisfy that every nowhere dense unary definable set is the union of finitely many discrete sets (d-minimality). We end the paper by delving into this subject, proving Theorem~\ref{thm:d-minimal-intro}.

\begin{thmA}\label{thm:d-minimal-intro}
Let $\Rr$ be an expansion of an ordered group. 
\begin{enumerate}
\item If $\Rr$ is $\NIP$ and d-minimal then its Shelah expansion $\Rshe$ is d-minimal (Proposition~\ref{prop:shelah-d-min}). 
\item If $\Rr$ is $\NTP$ then it cannot define a set $X\subseteq R$ such that $(X,<)$ is order-isomorphic to an ordinal $\alpha\geq \omega^\omega$ (Proposition~\ref{prop:omega}). 
\end{enumerate}
\end{thmA}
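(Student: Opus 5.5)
The statement has two independent parts, and I will prove them separately, in each case reducing to a combinatorial configuration that violates the assumed tameness.

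\textbf{Part 1.} I will use the characterization of d-minimality for expansions of ordered groups: every definable nowhere dense $X\subseteq R$ in every elementary extension is a finite union of discrete sets, with a uniform bound over definable families. Let $\Nn\succ\Rr$ be $|R|^+$-saturated. An externally definable set has the form $Y=\{a\in R^n:\Nn\models\varphi(a,b)\}$ with $b\in N$. By Shelah's theorem, $\Rshe$ has quantifier elimination down to externally definable sets, and it suffices to treat unary sets. Take $Y\subseteq R$ externally definable and nowhere dense in $R$. By NIP, honest definitions give an $\Rr$-definable family $\{\psi(x,c)\}$ with the following property: for every finite $F\subseteq R$ there is $c\in R$ such that $\psi(R,c)\subseteq Y$ and $\psi(R,c)\cap F=Y\cap F$. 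Nowhere density of $Y$ passes to each $\psi(R,c)$. Hence $\psi(R,c)$ is a union of at most $k$ discrete sets, with $k$ uniform by d-minimality.

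The difficulty is transferring this to $Y$ itself. I plan to express ``union of at most $k$ discrete sets'' through Cantor--Bendixson rank: $X$ is such a union iff $X$ has no points of rank $\ge k$ relative to $X$ in the iterated-isolated-points sense. This is witnessed by finitely many points and neighborhoods. If $Y$ had rank $>k$, a finite witnessing configuration $F$ would exist. Choosing $c$ for $F$, together with a saturation/compactness argument ensuring that the relevant neighborhoods stay free of points of $Y$, would give $\psi(R,c)$ rank $>k$, a contradiction. Controlling the neighborhoods is the main obstacle. The honest definition only pins down $F$-points, so the witness must be phrased using $Y\cap F$ and $\psi(R,c)\subseteq Y$. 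That inclusion is exactly what makes emptiness statements transfer downward. Finally, Shelah expansions of NIP structures are NIP, and elementary extensions of $\Rshe$ are handled by a uniform version.

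\textbf{Part 2.} Suppose $X$ is definable with $(X,<)\cong\alpha\ge\omega^\omega$. Shrinking $X$ definably to an initial segment, I assume $\alpha=\omega^\omega$ exactly. The derived-set operation $X\mapsto X'$ (limit points within $X$) is definable, so the levels $X^{(n)}$ are uniformly definable. I will build an inp-pattern of depth $2$ (one row sufficing for $\TP$ up to a shift) with infinitely many columns. Row $n$ should consist of the formulas $\varphi_n(x;a_{n,i})$ saying that $x$ lies in the $i$-th block between consecutive points of $X^{(n)}$ inside the interval determined at level $n+1$. This is the standard $k$-inconsistent array witnessing $\TP$ via rows of disjoint intervals, where consistency of each path comes from the nested-interval structure of $\omega^\omega$. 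Rows must be uniform in $n$ to obtain one formula, so I use the group operation to encode $n$ as a parameter, treating $X^{(n)}$ as a definable family in $n\in$ a definable copy of $\omega$. That copy is $X\cap[0,\min X^{(1)})$ mapped through the derived-level indexing.

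For the contradiction to reach $\NTP$, I only need a single formula $\varphi(x;y)$ and arrays of arbitrary finite depth. I will define it as $\varphi(x;u,v)$, meaning ``$x\in X$ and $u<x<v$ with $u,v$ consecutive in some $X^{(m)}$'', with $m$ coded by the pair. Rows use disjoint intervals, which gives $2$-inconsistency. Paths correspond to choosing, at each depth, a subinterval inside the previous one, and these are consistent by the ordinal structure. I expect the main obstacle to be arranging that rows are disjoint while every vertical path is realized. This requires level $n$ intervals to be chosen inside every level $n-1$ interval simultaneously, which is why I need all finite levels, i.e. $\omega^\omega$ rather than $\omega^k$.
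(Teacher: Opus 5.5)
Both parts have a genuine gap. In both, the missing ingredient is the paper's Lemma~\ref{lem:ntp2-CBrank}, proved via Lemma~\ref{lem:noise-NTP2}. It says: in an $\NTP$ expansion of an ordered group, if every finite subset of a definable $X\subseteq R$ lies in a member of some definable family of closed discrete sets, then $X$ is a finite union of discrete sets.

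\textbf{Part 1.} The honest definitions and the uniform bound $k$ match the paper, but your transfer step cannot work. Having $Y^{[k]}\neq\emptyset$ is not witnessed by finitely many points, since a point is non-isolated only if every neighbourhood contains further points.

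\begin{itemize}
\item The honest definition only gives $F\subseteq\psi(R,c)\subseteq Y$. The inclusion yields $\psi(R,c)^{[j]}\subseteq Y^{[j]}$, which is an \emph{upper} bound on the rank of $\psi(R,c)$, the wrong direction. Nothing prevents $\psi(R,c)$ from being discrete, even finite, near $F$.
\item Your argument uses only the existence of an honest family of sets of bounded Cantor--Bendixson rank, and that cannot suffice. In a structure uniformly defining all finite sets, the family of finite sets is honest for every $Y$.
\end{itemize}

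So you need a genuine combinatorial use of $\NIP$ (hence $\NTP$) of $\Rshe$; you invoke it only to pass to elementary extensions. The paper proceeds as follows.
\begin{itemize}
\item Using Lemma~\ref{lem:discrete-covering}, it replaces each $Y_b$ of empty interior by the uniformly definable closed discrete sets $Y_b(\varepsilon)$. Then every finite $F\subseteq X$ lies in some $Y_b(\varepsilon)\subseteq X$.
\item It applies Lemma~\ref{lem:ntp2-CBrank} in $\Rshe$. If $X$ were not a finite union of discrete sets, its type-definable perfect component $Z$ would have no isolated points.
\item Each $Y_b(\varepsilon)$ is codense in $Z$ yet can be chosen to meet any finitely many nonempty open subsets of $Z$. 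Lemma~\ref{lem:noise-NTP2} then builds a $\TP$ array from the annuli $(U\setminus V)[Y_b(\varepsilon)]$.
\end{itemize}

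Separately, restricting to nowhere dense $Y$ is not enough. d-minimality also forbids definable sets with empty interior that are dense in an interval. For example, $(\R,<,+,\Q)$ has o-minimal open core, so its nowhere dense definable sets are finite, yet it is not d-minimal. Work with $X\setminus int(X)$, which merely has empty interior, as the paper does.

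\textbf{Part 2.} Your formula $\varphi(x;u,v)$ defines $X\cap(u,v)$, a convex subset of $X$. Convex sets can never witness $\TP$, nor even an inp-pattern of depth $2$. Suppose $A_1<A_2<A_3$ are disjoint nonempty convex members of one row. Any convex set meeting $A_1$ and $A_3$ contains $A_2$, so two disjoint members of another row cannot both meet all three.

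This is exactly the obstacle you flag: level-$n$ blocks would have to lie inside every level-$(n-1)$ block at once. It cannot be arranged, because nested blocks form a tree pattern, not an array with independent rows. There are further problems. $\TP$ needs one formula with arrays of every finite depth, not a depth-$2$ pattern. You also give no uniform definition of the levels $X^{(n)}$ in $n$.

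The paper avoids all of this. It uses the closed discrete sets $X(\varepsilon)=\{a\in X:(a,a+\varepsilon)\cap X=\emptyset\}$. These increase as $\varepsilon$ decreases and cover $X$, since every non-maximal element has a successor. Lemma~\ref{lem:ntp2-CBrank} then applies, with its non-convex annulus formulas.

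One caution if you adopt this route: the lemma needs $X$ not to be a finite union of discrete sets in the subspace topology, and this does not follow from the order type alone. Over a definably complete group such as $(\R,<,+)$, first replace $X$ by $cl(X)$. It is still well-ordered of type $\geq\omega^\omega$, and its order and subspace topologies agree.
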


The structure of the paper is as follows. In Section~\ref{sec:prelim} we include the necessary preliminaries. In Section~\ref{sec:cons-df} we introduce our notion of $\dF$, and prove crucial results about definable constructible sets, showing that non-constructibility is equivalent to being dense and codense in a type-definable set. In Section~\ref{sec:BP} we present our notion of being strongly hereditarily Baire (\BP) and prove versions of Theorems~\ref{thm:main-intro} and~\ref{thm:f-intro} assuming this property in place of $\NTP$. In Section~\ref{sec:ntp} we investigate $\NTP$ uniform structures, proving that these are $\BP$. In Section~\ref{sec:nip} we prove Theorem~\ref{thm:nip-intro} about $\NIP$ uniform structures. In Section~\ref{sec:LO-structures} we focus on definably complete expansions of ordered groups. We complete the proof of Theorem~\ref{thm:main-intro}, recast Theorems~\ref{thm:main-intro} and~\ref{thm:f-intro} into a descriptive set theoretic dichotomy (Theorem~\ref{thm:Baire-class-one}), and prove Theorem~\ref{thm:cons-tame-top}. In Section~\ref{sec:strong} we prove Theorem~\ref{thm:strong-intro}, as well as our other results on strong uniform structures. Finally, in Section~\ref{sec:d-minimal} we prove Theorem~\ref{thm:d-minimal-intro} about d-minimality. 

\subsection*{Acknowledgments}

The author thanks Erik Walsberg, Will Johnson, Philipp Hieronymi, John Goodrick, and Antongiulio Fornasiero, for helpful conversations on the topics of this paper. The author was supported by the UK Engineering and Physical Sciences Research Council (EPSRC) Grant EP/V003291/1. Part of this research was carried out while the author was hosted at the Hausdorff Research Institute for Mathematics, funded by the Deutsche Forschungsgemeinschaft (DFG, German Research Foundation) under Germany's Excellence Strategy – EXC-2047/1 – 390685813. 

%He also thanks the organizers of the 2025 INdAM mini-workshop ``Recent developments in the model theory of fields" at the University of Naples Federico II, the 2025 conference ``O-Minimal Geometry - Interactions, Applications and Wider Developments" at the University of Warwick, and the 2025 trimester ``Definability, decidability, and computability" at the Hausdorff Research Institute for Mathematics (HIM), for invitations to share and discuss the material being presented here. 

%Please share your preprints and publications with our outreach manager at stefan.hartmann@hcm.uni-bonn.de.

\section{Preliminaries} \label{sec:prelim}

\subsection{Model-theoretic conventions and terminology}

Throughout we denote (first-order) structures by calligraphic letters $\Mm$, $\Rr$, $\ldots$, and their respective universes by uppercase roman letters $M$, $R$, $\ldots$. We denote tuples of variables or parameters by $x,y, z, a, b, c,\ldots$, and non-negative integers by $n, m, i, k, \ldots$. Given a structure $\Mm$, ``definable" means ``definable in $\Mm$ possibly with parameters" unless otherwise specified. A type-definable set is an intersection of definable sets. 

Given a (partitioned) formula $\varphi(x,y)$ in the language of $\Mm$, and parameters $b\in M^{|y|}$, we let $\varphi(M,b)=\{ a\in M^{|x|} : \Mm\models \varphi(a,b)\}$. A family $\{X_b : b\in B\}$ of subsets of $M^n$ is definable if $B\subseteq M^m$ is a definable set and there exists a formula $\varphi(x,y)$, with $|x|=n$ and $|y|=m$, such that $X_b=\varphi(M,b)$ for every $b\in B$. 
A collection of formulas is $k$-inconsistent if every sub-collection of size $k$ is inconsistent. We adopt the standard convention of identifying formulas with the sets that they define in a given background structure and vice versa.

\subsection{Neostability}

We present the main notions from neostability that we will use. For other relevant definitions such as $\NIP$, strong dependence, and dp-rank, we direct the reader to $\cite{simon:guide-NIP}$.

A structure $\Mm$ has the \emph{tree property of the second kind}, denoted \emph{$\TP$}, if there exists a formula $\varphi(x, y)$, some $k$ and, for every $n$ and $m$, a collection of parameters \mbox{$\{b_{i,j} : 1\leq i\leq n, \, 1\leq j \leq m\}$} in $M^{|y|}$, such that
\begin{enumerate}[(1)]
    \item $\{\varphi(x , b_{i,j}) : j \leq m\}$ is $k$-inconsistent for every $i\leq n$,
    \item $\{\varphi(x , b_{i,\eta(i)}) : i\leq n\}$ is consistent for every function $\eta : \{1,\ldots, n\} \rightarrow \{1,\ldots, m\}$.
\end{enumerate}
Equivalently, in the above definition one may always assume that $k=2$.
%Corollary 3.6 in ``Dense codense predicates and NTP" explains that 2-inconsistency in the definition of ntp2 is equivalent to k-inconsistency. 
A structure is $\NTP$ if it does not have $\TP$. A theory $T$ is $\NTP$ if any (every) model is $\NTP$. 
%For this, one may always check conditions (1) and (2) in an $\omega_1$-saturated structure, where it suffices to take $n=m=\omega$. 

%Given a set $X\subseteq M^{|x|}$, we say that $X$ has $\TP$ if the conditions above are satisfied and, moreover, in condition (2), the family $\{\varphi(x, a_{i,\eta(i)}) : i\leq n\}$ is consistent with $X$.   

For any cardinal $\kappa$, an \emph{inp-pattern of depth $\kappa$} in $\Mm$ is a family of pairs $
(\varphi_\alpha(x,y_\alpha), k_\alpha)
$,
for $\alpha < \kappa$, where $\varphi_\alpha(x,y_\alpha)$ is a formula and $k_\alpha$ a positive integer satisfying that, for every finite subset $\{ \alpha_1,\ldots, \alpha_n\} < \kappa$ and every $m$, there are parameters \mbox{$\{b_{i,j} : 1\leq i\leq n, \, 1\leq j \leq m\}$}, with $b_{i,j}\in M^{|y_{\alpha_i}|}$, such that
\begin{enumerate}[(1)]
    \item $\{\varphi_{\alpha_i}(x , b_{i,j}) : j \leq m\}$ is $k_{\alpha_i}$-inconsistent for every $i\leq n$,
    \item $\{\varphi_{\alpha_i}(x, b_{i,\eta(i)}) : i\leq n\}$ is consistent for every function $\eta : \{1\ldots, n\} \rightarrow \{1,\ldots, m\}$.
\end{enumerate}
We say that a set $X\subseteq M^{|x|}$ admits an inp-pattern of depth $\kappa$ if the conditions above are satisfied and, in condition (2), each family $\{\varphi_{\alpha_i}(x, b_{i,\eta(i)}) : i\leq n\}$ is consistent with $X$. The burden of $\Mm$ is the supremum of the depths of inp-patterns admitted by $M$. %Burden definition taken from Chernikov, who quotes Adler. We still define Burden, even though we only mention it a few times, because it is straightforward. 
We say that $\Mm$ is \emph{strong} if every inp-pattern in $\Mm$ has finite depth. A theory $T$ is strong if any (every) model is strong. Every strong theory is $\NTP$.
%A theory $T$ has \TP{} if every (any) model has an inp-pattern of depth $\kappa$, for any given ordinal $\kappa$.

Every strongly dependent theory is strong and $\NIP$, and every $\NIP$ theory is $\NTP$. %An $\NIP$ theory is strong iff it is strongly dependent. 

\subsection{Topological conventions and uniform structures}
%\subsection{Topological conventions and terminology}

Let $X$ be a subset of a topological space. We denote by $cl(X)$, $int(X)$, and $\partial X = cl(X)\setminus X$ the closure, interior and frontier of $X$ respectively. For any $n$, we define $\partial^{(n)}X$ recursively by letting $\partial^{(0)}X=X$ and $\partial^{(n+1)}X= \partial \partn{n}X$. Recall that a subset $Y$ of a topological space is nowhere dense if $cl(Y)$ has empty interior. Given two subsets $X$, $Y$ of a topological space, we say that $Y$ is dense in $X$ if $cl(Y\cap X) \supseteq X$, and say $Y$ is codense in $X$ if $cl(X\setminus Y) \supseteq X$. If $X\subseteq Y$, we refer to the interior of $X$ in $Y$ always with respect to the subspace topology on $Y$. 

%\subsection{Uniform structures} 

A (first-order) \emph{topological structure} is a structure $\Mm$ together with a Hausdorff\footnote{The Hausdorff assumption is made for convenience; many of the results in this paper can be easily adapted to the non-Hausdorff case.} topology on its universe $M$ that has a basis given by a definable family of sets. A \emph{uniform structure}\footnote{Unfortunately, the term ``uniform structure" is also common terminology to refer to a uniformity. The reader should take caution to avoid this confusion.} is a structure $\Mm$ together with a uniformity on $M$ that admits a basis $\BU$ given by a definable family of sets. More precisely, $\BU$ is a definable collection of subsets of $M^2$ which satisfies:
\begin{enumerate}[label=(\alph*)]
    \item $\bigcap \BU = \{ (x,x) : x\in M\}$,
    \item if $U\in \BU$ and $(x,y)\in U$, then $(y,x)\in U$ (symmetry), 
    \item for any $U, V\in \BU$, there exists $W\in \BU$ such that $W\subseteq U\cap V$ (downward directedness), 
    \item for any $U\in \BU$, there exists $V\in \BU$ such that $V\circ V\subseteq U$, where 
    $$V\circ V=\{ (x,y) \in M^2 : \exists z\in M,\, \{(x,z), (z,y)\}\subseteq V\}.$$ 
\end{enumerate}

The sets in $\BU$ are called basic entourages. Given any $x\in M$ and $U\in \BU$, we define the $U$-ball of center $x$ as $U[x]=\{ y\in M : (x,y)\in U\}$. Similarly, for any set $X\subseteq M$, we define $U[X]=\bigcup_{x\in X} U[x]$. The set $M$ is given the topology where, for every $x\in M$, the family of balls $\{ U[x] : U\in \BU\}$ is a basis of neighborhoods of $x$. Hence every uniform structure is a topological structure. For any $n$, we put on $M^n$ the product uniformity, which has basis $\Bb(n)=\{ U_1\times \cdots \times U_n : U_i\in \BU, \, i\leq n\}$, and induces the product topology on $M^n$. Given a definable set $Y\subseteq M^n$, the subspace uniformity also has a definable basis $\{ V \cap (Y\times Y) : V\in \Bb(n)\}$. Condition (a) ensures that the topology is Hausdorff. Condition (b) is sometimes substituted with a weaker property (see e.g.~\cite{dolich-good22}), however there is no loss of generality in assuming it. By passing from $\BU$ to $\{ int(U) : U\in \BU\}$ if necessary, we may assume that basic entourages are open, which implies in particular that \textbf{balls $U[x]$ are always open}.

Uniform structures have been explored, largely in the context of dp-minimality, in~\cite{sim-wals19, dolich-good22}. 
Two major classes of examples are given by $\Gamma$-valued metric spaces, for $\Gamma$ any ordered abelian group, and by topological groups (see \cite{sim-wals19}). We are particularly interested in ordered groups and valued fields.  
\begin{example}
 \begin{enumerate}
     \item Let $\Gg=(G,0,+,<,\ldots)$ be an expansion of an ordered group. Then $\Gg$ is a uniform structure, with basic entourages of the form $U_b=\{ (x,y)\in G^2 : |x-y|<b\}$, for $b\in G^{>0}$. 
     %Totally ordered group is a topological group. Every topological group is a uniform space in a natural way, by translations of neighborhoods of zero. 
     \item An expansion $\Kk$ of a valued field with, say, the divisibility predicate $v(x)\leq v(y)$, is a uniform structure, with basic entourages of the form $U_b=\{ (x,y)\in K^2 : v(b) \leq v(x-y)\}$, for $b\in K^\times$.   
 \end{enumerate}   
\end{example}

Given $\Mm$ a topological structure, the \emph{open core} of $\Mm$, denoted $\Mo$, is the reduct of $\Mm$ generated by all closed (open) definable sets, in the language with a symbol for every nonempty definable open set. This notion was first introduced in~\cite{MS99} and further explored in~\cite{DMS10}, although the idea of analyzing topological structures whose basic predicates are open or closed can be traced back to Robinson~\cite{rob73}. Given a uniform structure $\Mm$, \textbf{we assume throughout that $\Mo$, endowed with the same topology, is also a uniform structure}. This assumption is made because it is not clear to the author whether a definable basis for the uniformity can always be found in $\Mo$. Nevertheless, whenever the uniformity is given by a definable order $<$ or valuation $v$, the assumption is redundant, since the relations given respectively by $x < y$ and $v(x)\leq v(y)$ are both open in $M^2$. 

%\subsection{Examples of \NTP{} uniform structures}

Our assumption of definable $\sigma$-compactness (Definition~\ref{def:compact}) in Theorems~\ref{thm:main-intro} and~\ref{thm:f-intro}, made in place of properties invoking classical topological compactness, is a meaningful one, since it includes, for example, definably complete expansions of ordered groups (Lemma~\ref{lem:intervals-comp}). Our assumption of $\NTP$ in place of $\NIP$ is also relevant. Below we include some examples to illustrate this point.  

\begin{example}
\begin{enumerate}
    \item Let $\Uu$ be a non-principal ultrafilter on the set of prime numbers $P$. By \cite[Example 1.2]{ntp2-groups-fields}, the ultraproduct $\prod_{p\in P} \Q_p/\Uu$ is $\NTP$, but neither simple nor $\NIP$. Furthermore, it is definably $\sigma$-compact. This is a direct consequence of the fact that definable $\sigma$-compactness is expressed by a fixed set of sentences in the theory of any topological structure, which depends only on the formula defining the topology and the formula defining the witnessing upward directed covering. 

    \item An expansion of an o-minimal group by a generic predicate is inp-minimal, hence strong, but the theory has the independence property~\cite[Section 3.4]{DG17}. In this case the open core is o-minimal by~\cite{DMS10}.
    
    \item Dolich and Goodrick~\cite[Section 3]{DG-burden2} build a definably complete expansion of a divisible ordered Abelian group that has burden $2$ and the independence property. 
\end{enumerate}    
\end{example}

\section{Constructible and $\dF$ sets} \label{sec:cons-df}

Recall that a subset of a topological space is $F_\sigma$ if it is a countable union of closed sets. We introduce a definable analogue of this property for definable sets in topological structures. 

%In general topology, a subset of a topological space is $F_\sigma$ if it is the union of a countable family of closed sets and $G_\delta$ if it is the intersection of a countable family of open sets. We work with definable analogues of these properties. 

A family of sets $\Cc$ is \emph{upward} (respectively \emph{downward}) \emph{directed} if, for any $C_1, C_2 \in \Cc$, there exists some $C_3\in \Cc$ such that $C_1 \cup C_2 \subseteq C_3$ (respectively $C_3 \subseteq C_1 \cap C_2$). Equivalently, if, for any finite subfamily $\Ff\subseteq \Cc$, there is $C_3\in \Cc$ such that $\bigcup \Ff \subseteq C_3$ (respectively $C_3\subseteq \bigcap\Ff$).

\begin{definition}
Let $X$ be a definable set in a topological structure $\Mm$. We say that $X$ is $\dF$ (definably $F_\sigma$) if $X$ is the union of an upward directed definable family of closed sets. 
%We say that $X$ is $\dG$ if its complement is $\dF$, equivalently if $X$ is the intersection of a downward directed definable family of open sets. 
\end{definition}

%Note: rational numbers are a $F_\sigma$ set but not $\dF$ set in $(\mathbb{R},+,\cdot,<)$.

Similar but more restrictive notions than $\dF$ are called $D_\Sigma$ in~\cite{DMS10} or simply $\mathcal{F}_\sigma$ in~\cite{def-baire}. 
%Proposition~\ref{prop:dim} uses the notion of $D_\sigma$, and the proof of Theorem~\ref{thm:Baire-class-one} relies implicitly on the notion of $\mathcal{F}_\sigma$. 

A subset of a topological space is \emph{constructible} if it is a finite Boolean combination of closed (open) sets. A topological structure $\Mm$ is \emph{constructible} if every definable set in it is constructible. Robinson~\cite{rob73} and Pillay~\cite{pillay87} studied topological structures with various constructibility assumptions on them. 

The following characterization of constructible sets follows from the main result in~\cite{MR01} and will be assumed and used often throughout.  

\begin{fact}[\cite{MR01}]\label{fct:constructible0}
Let $X$ be a subset of a topological space. The following are equivalent. 
\begin{enumerate}[(1)]
    \item $X$ is constructible. 
    \item $\partial^{(n)}X = \overbrace{\partial \cdots \partial}^{n} X=\emptyset$ for some $n<\omega$. 
\end{enumerate}
\end{fact}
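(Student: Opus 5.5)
The plan is to use the \emph{difference hierarchy} as an intermediate normal form. We say $X$ has a \emph{difference representation of length $n$} if
$$X = F_1\setminus(F_2\setminus(F_3\setminus\cdots\setminus F_n)\cdots)$$
for closed sets $F_1\supseteq F_2\supseteq\cdots\supseteq F_n$. Equivalently, setting $r(x)=\max\{i : x\in F_i\}$ (with $r(x)=0$ if $x\notin F_1$), we have $X=\{x : r(x)\text{ is odd}\}$. The function $r$ is upper semicontinuous, since each superlevel set $\{r\geq i\}=F_i$ is closed. The proof then has three steps: show (2) is equivalent to having a difference representation, and show that sets with difference representations are exactly the constructible sets.

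\emph{(2) $\Rightarrow$ difference representation, and hence (1).} For any set $Y$ we have $Y\subseteq cl(Y)$ and $\partial Y = cl(Y)\setminus Y$, so $Y = cl(Y)\setminus \partial Y$. Iterating this identity, if $\partial^{(n)}X=\emptyset$ then
$$X = cl(X)\setminus\bigl(cl(\partial X)\setminus(cl(\partial^{(2)}X)\setminus\cdots)\bigr),$$
where the chain stops once it reaches $cl(\partial^{(n-1)}X)$, because $\partial^{(n-1)}X$ is then closed. The sets in this chain are decreasing, since $\partial Y\subseteq cl(Y)$ gives $cl(\partial Y)\subseteq cl(Y)$. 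This is a difference representation, and in particular $X$ is a finite Boolean combination of closed sets.

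\emph{Difference representation $\Rightarrow$ (2).} We induct on the length $n$. In the base case $n=1$, $X$ is closed and so $\partial X=\emptyset$. For the inductive step, write $X=F_1\setminus Z$ with $Z=F_2\setminus(F_3\setminus\cdots)$. Intersecting every $F_i$ with $cl(X)$ keeps the chain decreasing and closed and does not change $X$, because $X\subseteq cl(X)$. So we may assume $F_1=cl(X)$ and $Z\subseteq cl(X)$. Then $\partial X = cl(X)\setminus X = Z$, and $Z$ has a difference representation of length $n-1$. By induction $\partial^{(n-1)}Z=\emptyset$, and therefore $\partial^{(n)}X=\emptyset$.

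\emph{(1) $\Rightarrow$ difference representation.} This is the main obstacle, essentially Hausdorff's theorem that the difference hierarchy is closed under Boolean operations. Closed sets have representations of length $1$. For complements, if $X=F_1\setminus(F_2\setminus\cdots)$ then $M\setminus X = M\setminus(F_1\setminus(F_2\setminus\cdots))$, which is again a representation, one step longer, since $M$ is closed. It then suffices to handle intersections; unions follow by De Morgan.

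For intersections, let $r_X$ and $r_Y$ be the upper semicontinuous rank functions of $X$ and $Y$, with values bounded by $n$. I will construct a function $g:\N^2\to\N$ that is monotone in each coordinate and satisfies $g(a,b)$ odd iff $a$ and $b$ are both odd. Such a $g$ exists by a recursive definition: set $g(a,b)$ to be $\max(g(a-1,b),g(a,b-1))$, and add $1$ if that value has the wrong parity. Put $s=g(r_X,r_Y)$. Then
$$\{s\geq k\}=\bigcup_{g(a,b)\geq k}\bigl(\{r_X\geq a\}\cap\{r_Y\geq b\}\bigr).$$
Monotonicity of $g$ justifies this equality, and it is a finite union of closed sets, hence closed. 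So the sets $H_k=\{s\geq k\}$ form a decreasing chain of closed sets, and $X\cap Y=\{s \text{ odd}\}$ is the difference representation given by the chain $(H_k)_k$.

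Combining the three steps proves the equivalence of (1) and (2).
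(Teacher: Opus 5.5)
Your proof is correct. Note that the paper gives no proof of this statement to compare against: it imports it as a Fact from~\cite{MR01}. Your write-up is therefore a self-contained replacement rather than a reconstruction.

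I checked each step:
\begin{itemize}
\item The iterated identity $Y=cl(Y)\setminus\partial Y$ does give the nested closed chain $cl(X)\supseteq cl(\partial X)\supseteq\cdots\supseteq cl(\partial^{(n-1)}X)$.
\item The normalization $F_1=cl(X)$ is legitimate because $(A\cap C)\setminus(B\cap C)=(A\setminus B)\cap C$. Intersecting the whole chain with $C=cl(X)$ therefore replaces $X$ by $X\cap cl(X)=X$, and the resulting equality $\partial X=Z$ drives the induction.
\item In the intersection step, $\{r_X\geq 0\}$ and $\{r_Y\geq 0\}$ are the whole space, so every $H_k$ is closed. The rank function of the chain $(H_k)_{k\geq 1}$ is exactly $s$, and the parity property of $g$ gives $X\cap Y=\{s \text{ odd}\}$.
\end{itemize}
One small repair: in the recursive definition of $g$, fix $g(0,0)=0$ and take the maximum only over predecessors that exist. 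Alternatively, the explicit choice $g(a,b)=a+b$ if $a,b$ are both even and $g(a,b)=a+b-1$ otherwise works.

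Your route gives more than the bare citation.
\begin{itemize}
\item It shows that the least $n$ with $\partial^{(n)}X=\emptyset$ is exactly the least length of a difference representation of $X$.
\item The normal form you obtain in the direction (2)$\Rightarrow$(1) is built only from the sets $cl(\partial^{(k)}X)$. These are definable whenever $X$ is, so it yields Fact~\ref{fct:constructible1} at once, which is the form in which the paper actually uses the result.
\item The detour through upper semicontinuous rank functions keeps the hard direction (1)$\Rightarrow$(2) elementary. Proving directly that $\{X : \partial^{(n)}X=\emptyset \text{ for some } n\}$ is closed under intersections would require controlling $\partial(X\cap Y)$ in terms of the iterated frontiers of $X$ and $Y$, which is not straightforward. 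Your version reduces everything to Hausdorff's closure of the difference hierarchy under Boolean operations, and then to the combinatorics of $g$.
\end{itemize}
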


Fact~\ref{fct:constructible0} has the following consequence. 

\begin{fact}[\cite{MR01}]\label{fct:constructible1}
Let $X$ be a definable set in a topological structure. Then $X$ is constructible if and only if it is a finite Boolean combination of closed (open) \textbf{definable} sets.
\end{fact}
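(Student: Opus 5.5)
The statement to prove is Fact~\ref{fct:constructible1}: a definable $X$ is constructible iff it is a finite Boolean combination of definable closed (equivalently, definable open) sets. The backward direction is trivial. For the forward direction I would use Fact~\ref{fct:constructible0}, which turns constructibility into the vanishing of some iterated frontier $\partial^{(n)}X$. The key observation is that every set in the sequence $\partial^{(k)}X$ is definable. Indeed, closure is first-order expressible via the definable basis of the topology: $a\in cl(Y)$ iff every basic open set containing $a$ meets $Y$. Hence $cl(Y)$ is definable whenever $Y$ is, and so is $\partial Y = cl(Y)\setminus Y$. By induction, every $\partial^{(k)}X$ is definable.

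Next I would write $X$ explicitly as a Boolean combination of the closures $C_k = cl(\partial^{(k)}X)$, each of which is a definable closed set. Set $X_k=\partial^{(k)}X$. Then $X_k\subseteq C_k$ and $X_{k+1}=C_k\setminus X_k$, so $X_k = C_k\setminus X_{k+1}$. Unwinding gives
$$X = C_0\setminus (C_1\setminus (C_2\setminus \cdots (C_{n-1}\setminus X_n)\cdots)).$$
Choosing $n$ with $X_n=\emptyset$, which Fact~\ref{fct:constructible0} provides, this is a finite Boolean combination of definable closed sets. Taking complements gives the version with definable open sets.

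The only point needing care is the claim that closure preserves definability when the topology has a definable basis $\{B_b : b\in B\}$. Concretely, $cl(Y)=\{a : \forall b\in B\,(a\in B_b \rightarrow \exists y\in Y\, y\in B_b)\}$, which is definable. Once that is in place, everything else is routine bookkeeping, with all the real content carried by the main result of~\cite{MR01} packaged in Fact~\ref{fct:constructible0}. For products $M^n$ the same argument applies with the product basis, which is again a definable family.
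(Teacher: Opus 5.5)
Your proposal is correct and follows the same route as the paper, which states this as a direct consequence of Fact~\ref{fct:constructible0} without writing out the argument. Your two ingredients, definability of the iterated frontiers $\partial^{(k)}X$ via the definable basis and the identity $X_k = C_k\setminus X_{k+1}$ that unwinds $X$ into a Boolean combination of the definable closed sets $cl(\partial^{(k)}X)$, are exactly the derivation the paper leaves to the reader.
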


Fact~\ref{fct:constructible1} states that a definable set in a topological structure $\Mm$ is constructible iff it is definable without quantifiers in the open core $\Mo$. In particular, the open core can be defined as the reduct generated by definable constructible sets, and $\Mo$ is constructible iff it has quantifier elimination. 

\begin{remark}\label{rem:extensions}
\begin{enumerate}
\item If a definable set in a topological structure $\Mm$ is $\dF$, then it is clearly still $\dF$ after passing to its definition in any elementary extension of $\Mm$. By Fact~\ref{fct:constructible1}, the same is true of being constructible. We shall use these facts without further explanation. 

\item Since fibers of constructible sets (in Hausdorff spaces) are constructible, a topological structure is constructible iff all $0$-definable sets are constructible. By Fact~\ref{fct:constructible1}, it follows that constructibility is closed under elementary equivalence. 
\end{enumerate}
\end{remark}

The following lemma is well known in specialized settings for notions such as $D_\Sigma$ (see Remark~\ref{rem:simple-BP}) and $\Ff_\sigma$ in place of $\dF$. It constitutes a crucial reason why we work mostly in the context of uniform, rather than general, topological structures. 

\begin{lemma}\label{lem:cons-dF}
Let $X$ be a constructible definable set in a uniform structure $\Mm$. Then $X$ is $\dF$. 
%Then $X$ is both $\dF$ and $\dG$. 
\end{lemma}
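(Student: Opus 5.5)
By Fact~\ref{fct:constructible1}, a constructible definable set $X$ is a finite Boolean combination of definable closed sets. Writing this combination in disjunctive normal form, $X$ is a finite union of sets of the form $C\cap O$ with $C$ closed definable and $O$ open definable, since a finite intersection of closed sets is closed and a finite intersection of open sets is open. The plan has three steps: show every definable open set is $\dF$, show $\dF$ sets are closed under finite intersections with closed definable sets, and show $\dF$ sets are closed under finite unions.

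\textbf{Open sets.} Let $O\subseteq M^n$ be definable and open, and let $\Bb(n)$ be the definable basis of product entourages. For $V\in \Bb(n)$ set
\[
C_V=\{x\in M^n : V[x]\subseteq O\}.
\]
The family $\{C_V : V\in\Bb(n)\}$ is definable, uniformly in the parameter coding $V$. Its union is $O$, because $O$ is open and the balls $V[x]$ form a neighborhood basis at $x$. It is upward directed: if $W\subseteq V_1\cap V_2$, then $C_{V_1}\cup C_{V_2}\subseteq C_W$. The family need not consist of closed sets, so the main step is to replace $C_V$ by something closed that still lies in $O$.

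\textbf{Closedness.} Pick $W$ with $W\circ W\subseteq V$ and consider $cl(C_W)$. Let $y\in cl(C_W)$. Then there is $x\in C_W\cap W[y]$, so $(y,x)\in W$. For any $z\in W[x]$ we have $z\in W[x]\subseteq O$ since $x\in C_W$. This shows $y\in O$, so $cl(C_W)\subseteq O$, which is all that is needed. Now replace the family by $\{cl(C_V) : V\in\Bb(n)\}$. It is definable, since closure is first-order definable from the definable basis. It is upward directed, since closure is monotone. Its union is $O$: each $cl(C_V)\subseteq O$, because for $V$ pick $W$ as above with $C_V\subseteq C_W$, so $cl(C_V)\subseteq cl(C_W)\subseteq O$; and the union contains $\bigcup_V C_V=O$. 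This step, passing to closures using the $W\circ W\subseteq V$ axiom and symmetry, is the one place the uniformity is genuinely used. It is the main point where uniform structures differ from general topological ones.

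\textbf{Intersections and unions.} If $O=\bigcup_t D_t$ is an upward directed definable union of closed sets and $C$ is closed definable, then $C\cap O=\bigcup_t (C\cap D_t)$, which is again directed, definable and closed. For finitely many $\dF$ sets $X_i=\bigcup_{t\in T_i} D^i_t$, index the union by tuples $(t_1,\dots,t_k)\in T_1\times\cdots\times T_k$ with sets $D^1_{t_1}\cup\cdots\cup D^k_{t_k}$. These are closed, form a definable family, are upward directed coordinatewise, and have union $\bigcup_i X_i$. Combining the three steps shows that $X$ is $\dF$.
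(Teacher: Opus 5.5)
Your proof is correct and follows essentially the paper's route: you reduce via Fact~\ref{fct:constructible1} to locally closed sets $C\cap O$, witness these by shrinking $O$ along basic entourages, and then close under finite unions. By symmetry, your $C_V=\{x : V[x]\subseteq O\}$ equals $M^n\setminus V[M^n\setminus O]$, which is exactly the paper's set and is already closed under its standing convention that basic entourages (hence balls) are open, so your passage to $cl(C_V)$ is a harmless substitute for that convention; the $W\circ W\subseteq V$ detour is unnecessary, since symmetry alone gives $cl(C_V)\subseteq O$.
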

\begin{proof}
Let $X\subseteq M^n$ and $\BU$ be a definable basis for the product uniformity on $M^n$. If $X=A\cap B$, where $A$ and $B$ are definable sets with $A$ open and $B$ closed (i.e. $X$ is locally closed), then the family of sets of the form $B \setminus U[M^n\setminus A]$, for $U\in \BU$, witnesses that $X$ is $\dF$. It is easy to prove that a finite union of $\dF$ sets is $\dF$ and so, by Fact~\ref{fct:constructible1}, the lemma follows.  
\end{proof}

%Suppose that $\Mm$ expands an ordered abelian group. Then the topology on $M^n$ is generated by a definable ($M$-valued) metric $d$. By changing, in the proof of Lemma~\ref{lem:cons-dF}, the $U$-ball $U[M^n\setminus A]$ to $\{ x\in M^n : d(x,M^n\setminus A)<r\}$, for $r\in M^{<0}$, one may show that the family witnessing $\dF$ can always be chosen to be nested and indexed by $r \in M^{>0}$. This corresponds to the notion of $\Ff_\sigma$ set used in~\cite{def-baire}. Similarly, in a valued field the witnessing family can be chosen to be indexed by the value group. 
%See Lemma 3.2 in ``Definably complete Baire structures and Pfaffian closure" or ``Definably complete Baire structures".

\begin{remark}\label{rem:dG}
Recall that the complement of an $F_\sigma$ set is called $G_\delta$. Accordingly, we might call the complement of a $\dF$ set a $\dG$ set. Since constructible sets are closed under complements, by Lemma~\ref{lem:cons-dF} every constructible definable set in a uniform structure is also $\dG$, in agreement with the general theory of uniform spaces. 
\end{remark} 

The next two lemmas establish the crucial relationship between non-constructible sets and sets that are dense and codense in some set. We regard this straightforward connection as a central insight of this paper. 

\begin{lemma}\label{lem:cons-noise0}
Let $X$ be a subset of a topological space. If there exists another subset $Z\neq \emptyset$ such that $X$ is dense and codense in $Z$, then $X$ is not constructible. 
\end{lemma}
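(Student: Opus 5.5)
We use Fact~\ref{fct:constructible0} and show that $Z\subseteq \partial^{(n)}X$ for every $n<\omega$, where $\partial$ is the frontier operator $\partial Y = cl(Y)\setminus Y$. Since $Z\neq\emptyset$, this gives $\partial^{(n)}X\neq\emptyset$ for all $n$, so $X$ is not constructible. The induction will not be run on $Z$ directly. Instead we split $Z$ into the two pieces $Z\cap X$ and $Z\setminus X$ and track, at each stage, which piece sits inside $\partial^{(n)}X$ and which sits inside its complement $cl(\partial^{(n)}X)\setminus\partial^{(n)}X$.

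Here is the inductive claim. For every $n$:
\begin{itemize}
\item if $n$ is even, then $Z\cap X\subseteq \partial^{(n)}X$ and $Z\setminus X\subseteq cl(\partial^{(n)}X)\setminus \partial^{(n)}X$;
\item if $n$ is odd, then $Z\setminus X\subseteq \partial^{(n)}X$ and $Z\cap X\subseteq cl(\partial^{(n)}X)\setminus \partial^{(n)}X$.
\end{itemize}
In every case $Z\subseteq cl(\partial^{(n)}X)$.

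For the base case $n=0$, clearly $Z\cap X\subseteq X$. Codensity of $X$ in $Z$ is not yet needed, but density of $X$ in $Z$ gives $Z\subseteq cl(Z\cap X)\subseteq cl(X)$. Hence $Z\setminus X\subseteq cl(X)\setminus X=\partial X$.

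For the inductive step, suppose first that $n$ is even. By hypothesis $Z\setminus X\subseteq cl(\partial^{(n)}X)\setminus\partial^{(n)}X=\partial^{(n+1)}X$. Codensity of $X$ in $Z$ gives $Z\subseteq cl(Z\setminus X)\subseteq cl(\partial^{(n+1)}X)$. Moreover $Z\cap X\subseteq \partial^{(n)}X$, which is disjoint from $\partial^{(n+1)}X\subseteq cl(\partial^{(n)}X)\setminus\partial^{(n)}X$. So $Z\cap X\subseteq cl(\partial^{(n+1)}X)\setminus\partial^{(n+1)}X$, which is the odd-case claim. The case $n$ odd is symmetric: swap the roles of $Z\cap X$ and $Z\setminus X$, and use density, i.e.\ $Z\subseteq cl(Z\cap X)$.

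There is no real obstacle here. The only point needing care is the bookkeeping: at each stage one must use the correct one of density and codensity. One must also note that $\partial^{(n+1)}X$ is disjoint from $\partial^{(n)}X$ by definition, which is exactly what forces the two pieces of $Z$ to alternate between consecutive iterated frontiers.
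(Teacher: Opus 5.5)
Your proof is correct and is essentially the paper's argument. The paper observes that in the subspace topology of $Z$ the iterated frontiers $\partial_Z^{(n)}(X\cap Z)$ alternate between the nonempty sets $Z\cap X$ and $Z\setminus X$, and that $\partial_Z^{(n)}(X\cap Z)\subseteq \partial^{(n)}X$; your induction directly verifies that inclusion in the ambient space. One slip to fix: your opening target $Z\subseteq\partial^{(n)}X$ is false (e.g.\ $X=\Q$, $Z=\R$). What you actually prove, and all you need, is $Z\subseteq cl(\partial^{(n)}X)$, which together with $Z\neq\emptyset$ gives $\partial^{(n)}X\neq\emptyset$.
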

\begin{proof}
Let $\partial_Z$ refer to the frontier with respect to the subspace topology on $Z$. If $X$ is dense and codense in $Z$ then, in particular, we have that $\partial_Z^{(n)}(X\cap Z) \neq \emptyset$ for every $n$. Note that $\partial_Z^{(n)}(X\cap Z) \subseteq \partial^{(n)}X$ for every $n$. Hence the lemma follows from Fact~\ref{fct:constructible0}.
\end{proof}

We show that, as long as our set $X$ is definable in an $\omega_1$-saturated topological structure, the converse of Lemma~\ref{lem:cons-noise0} also holds. 

\begin{lemma}\label{lem:cons-noise1}
Let $X$ be a definable set in an $\omega_1$-saturated topological structure. If $X$ is not constructible, then $X$ is dense and codense in the nonempty type-definable set $\bigcap_{n<\omega} cl(\partial^{(n)}X)$.
\end{lemma}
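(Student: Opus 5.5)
The plan is to prove two elementary monotonicity facts about the iterated frontiers and then apply $\omega_1$-saturation twice: once for nonemptiness of the intersection, and once for density and codensity. Write $Z=\bigcap_{n<\omega} cl(\partial^{(n)}X)$. Each $\partial^{(n)}X$ is definable, since closure and frontier of a definable set are definable in a topological structure with a definable basis. So each $cl(\partial^{(n)}X)$ is definable and $Z$ is type-definable over countably many parameters.

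The two monotonicity facts are the following.
\begin{enumerate}[(i)]
\item The sets $cl(\partial^{(n)}X)$ are decreasing in $n$. Indeed $\partial^{(n+1)}X = cl(\partial^{(n)}X)\setminus \partial^{(n)}X \subseteq cl(\partial^{(n)}X)$, so $cl(\partial^{(n+1)}X)\subseteq cl(\partial^{(n)}X)$.
\item $\partial^{(n+2)}X\subseteq \partial^{(n)}X$ for every $n$. Indeed
$$\partial^{(n+2)}X \subseteq cl(\partial^{(n+1)}X)\setminus \partial^{(n+1)}X \subseteq cl(\partial^{(n)}X)\setminus\bigl(cl(\partial^{(n)}X)\setminus \partial^{(n)}X\bigr) = \partial^{(n)}X.$$
\end{enumerate}
Starting from $\partial^{(0)}X=X$ and $\partial^{(1)}X=cl(X)\setminus X$, fact (ii) gives $\partial^{(2k)}X\subseteq X$ and $\partial^{(2k+1)}X\subseteq M^{|x|}\setminus X$ for all $k$. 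Both chains $(\partial^{(2k)}X)_k$ and $(\partial^{(2k+1)}X)_k$ are decreasing. Finally, by (i), for $m\geq n$ we have $\partial^{(m)}X\subseteq cl(\partial^{(m)}X)\subseteq cl(\partial^{(n)}X)$.

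Nonemptiness of $Z$ is then direct. Since $X$ is not constructible, Fact~\ref{fct:constructible0} gives $\partial^{(n)}X\neq\emptyset$ for all $n$. So $\{cl(\partial^{(n)}X):n<\omega\}$ is a decreasing countable chain of nonempty definable sets, hence finitely consistent, and $\omega_1$-saturation yields $Z\neq\emptyset$.

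For density and codensity, I fix $z\in Z$ and a definable basic open neighbourhood $B$ of $z$. I must find points of $B\cap Z\cap X$ and of $B\cap Z\setminus X$. Since $z\in cl(\partial^{(n)}X)$ and $B$ is open, $B\cap\partial^{(n)}X\neq\emptyset$ for every $n$. The family $\{B\cap \partial^{(2k)}X : k<\omega\}$ is then a decreasing countable chain of nonempty definable sets, so by $\omega_1$-saturation it has a common point $a$. This $a$ lies in $B$ and in $\partial^{(0)}X=X$. For any $n$, choosing $2k\geq n$ gives $a\in\partial^{(2k)}X\subseteq cl(\partial^{(n)}X)$, so $a\in Z$. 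The odd chain $\{B\cap\partial^{(2k+1)}X : k<\omega\}$ gives, in the same way, a point of $B\cap Z\setminus X$. Since $z$ and $B$ were arbitrary, $Z\subseteq cl(X\cap Z)$ and $Z\subseteq cl(Z\setminus X)$, so $X$ is dense and codense in $Z$.

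The only step needing care is the parity inclusion (ii): it is what places the even frontiers inside $X$ and the odd ones inside its complement, while keeping both chains decreasing so that saturation applies. Everything else is bookkeeping, together with the check that the relevant types involve only countably many parameters, so that $\omega_1$-saturation suffices.
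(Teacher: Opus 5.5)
Your proof is correct and follows the paper's argument. Saturation on the decreasing chain $cl(\partial^{(n)}X)$ gives nonemptiness. The parity split, with even frontiers inside $X$ and odd ones outside it, combined with saturation applied near a point of the intersection, gives density and codensity. You only make explicit two things the paper leaves implicit: the inclusion $\partial^{(n+2)}X\subseteq\partial^{(n)}X$, and the use of a definable basic neighbourhood $B$ so that the chain $B\cap\partial^{(2k)}X$ is definable.
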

\begin{proof}
Observe that the sequence of definable sets $cl(\partial^{(n)}X)$ is decreasing. By Fact~\ref{fct:constructible0}, each set $cl(\partial^{(n)}X)$ is nonempty. So $C=\bigcap_{n<\omega} cl(\partial^{(n)}X)$ is a nonempty type-definable set. We note that $X$ is dense and codense in $C$. 

First observe that $X\cap C$ and $X\setminus C$ are both type-definable sets. In particular we have $X\cap C=\bigcap_{n<\omega} \partial^{(2n)} X$ and $C\setminus X = \bigcap_{n<\omega} \partial^{(2n+1)} X$, where $(\partial^{(2n)} X)_n$ and $(\partial^{(2n+1)} X)_n$ are both clearly decreasing sequences of nonempty sets. So every $x\in C\setminus X$ is in $cl(\partial^{(2n)} X)$ for every $n$, and so, by saturation, $x\in cl(X\cap C)$. Similarly, one shows that $X\cap C \subseteq cl(C\setminus X)$.  
\end{proof}

Joining Lemmas~\ref{lem:cons-noise0} and~\ref{lem:cons-noise1} (and applying Remark~\ref{rem:extensions}) we get that a definable set is not constructible iff, after passing to its definition in an $\omega_1$-saturated elementary extension, the set is dense and codense in a type-definable set. 

A topological structure $\Mm$ is \emph{noiseless} if there does not exist a definable set that is dense and codense in a (definable) open set. Furthermore, $\Mm$ is \emph{strongly noiseless} if there does not exist a definable set that is dense and codense in another definable set. These properties are central to Miller's program~\cite{miller05-tame}. Lemmas~\ref{lem:cons-noise0} and~\ref{lem:cons-noise1} highlight the close connection between them and constructibility. In particular, the following implications are obvious:
\[
\text{Constructible} \Rightarrow \text{Strongly Noiseless} \Rightarrow \text{Noiseless}.
\]

We end the section with a short proposition that might be of independent interest.

\begin{proposition}\label{dense-filter}
Let $X$ be a nonempty topological space. The constructible dense subsets of $X$ form a filter in the Boolean algebra of constructible sets. 

If $X$ is a nonempty set definable in a constructible structure, then the definable dense subsets of $X$ form a (consistent, partial) type. 
\end{proposition}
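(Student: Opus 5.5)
Following the paper's convention that ``$Y$ is dense in $X$'' means $cl(Y\cap X)\supseteq X$, I will work inside $X$, replacing each constructible set $Y$ by $Y\cap X$. This does not change whether $Y$ is dense in $X$. When $X$ is itself constructible, $Y\cap X$ is again constructible. Otherwise, the statement should be read in the Boolean algebra of constructible subsets of the space $X$, which are exactly the traces on $X$ of constructible sets. With this convention the two filter axioms that are not about intersections are immediate. Upward closure holds because if $Y\subseteq Y'$ then $cl(Y\cap X)\subseteq cl(Y'\cap X)$. Properness holds because $X\neq\emptyset$ forces $cl(\emptyset)=\emptyset\not\supseteq X$, and $X$ itself is dense.

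The substantive step is closure under finite intersections. Let $A,B\subseteq X$ be constructible and dense in $X$. The key lemma I will prove is that a constructible dense subset $A$ of $X$ contains a dense open subset of $X$. Then $A\supseteq U$ and $B\supseteq V$ with $U,V$ open and dense in $X$, and $U\cap V$ is open and dense, since an open dense set meets every nonempty open set in a nonempty open set. So $A\cap B\supseteq U\cap V$ is dense.

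For the lemma I use $int_X(A)$. Suppose $int_X(A)$ is not dense. Then there is a nonempty open $W\subseteq X$ disjoint from $int_X(A)$, so $A\cap W$ has empty interior. Since $A$ is dense, $A\cap W$ is dense in $W$. Because $A\cap W$ has empty interior in $W$, $W\setminus A$ is also dense in $W$. Hence $A$ is dense and codense in the nonempty set $W$, and Lemma~\ref{lem:cons-noise0} says $A$ is not constructible, a contradiction. One point needs care. Lemma~\ref{lem:cons-noise0} is stated in the ambient space, whereas here constructibility may only be relative to $X$. Its proof works verbatim in the subspace $X$ via Fact~\ref{fct:constructible0}, since frontiers relative to $W\subseteq X$ sit inside the iterated frontiers relative to $X$. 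I expect this relativization to be the only delicate point.

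For the second statement, let $X$ be a nonempty definable set in a constructible structure, and let $A_1,\dots,A_n$ be definable and dense in $X$. Every definable set is constructible, so the first part makes $A_1\cap\dots\cap A_n\cap X$ dense in $X$, and in particular nonempty. Therefore the collection of formulas defining dense subsets of $X$ is finitely satisfiable. It is a partial type containing $X$ and is consistent.
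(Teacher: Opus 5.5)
Your proof is correct, but it is organized differently from the paper's. The paper argues directly with the two sets. If $Y\cap Z$ were not dense, it picks $x\in X\setminus cl(Y\cap Z)$ and an open neighborhood $W$ of $x$ missing $Y\cap Z$. Then $Y\cap W$ and $Z\cap W$ are disjoint and both dense in $W$, so each lies in the frontier of the other. Hence $Y$ is dense and codense in $W$, and Lemma~\ref{lem:cons-noise0} gives the contradiction.

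You instead isolate a one-set lemma: a constructible set dense in $X$ has dense interior in $X$. You prove it with the same appeal to Lemma~\ref{lem:cons-noise0}, applied on the open set $X\setminus cl(int_X(A))$. You then finish with the fact that two open dense sets have dense intersection.

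Your route costs one extra step but gives more. It shows that, among constructible sets, being dense in $X$ is equivalent to containing an open dense subset of $X$, equivalently to having nowhere dense complement. So the filter in question is exactly the trace on the constructible algebra of the filter of sets with nowhere dense complement. This equivalence fails for arbitrary sets, such as $\mathbb{Q}\subseteq\mathbb{R}$. The paper's argument is shorter and works only with the given pair of sets.

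The relativization you flag as the delicate point is not really an issue. In the first statement $X$ is itself the topological space, so Lemma~\ref{lem:cons-noise0} applies to it verbatim. For the second statement, all you need is that traces on $X$ of constructible subsets of $M^n$ are constructible in the subspace $X$, which you correctly observe.
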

\begin{proof}
The second paragraph follows from the first one. Let $Y$ and $Z$ be two dense subsets of $X$, we show that the intersection $Y\cap Z$ is dense in $X$. 

Towards a contradiction, suppose that $Y\cap Z$ is not dense, and let $x\in X\setminus cl(Y\cap Z)$. Let $W$ be an open neighborhood of $x$ that is disjoint from $Y\cap Z$. Then we have that $Y\cap W$ and $Z\cap W$ are disjoint and dense in $W$. In particular $Z\cap W\subseteq \partial (Y\cap W)$ and $Y\cap W\subseteq \partial (Z\cap W)$. By Lemma~\ref{lem:cons-noise0} we get that $Y$ and $Z$ are both non-constructible, contradicting that $\Mm$ is constructible. 
\end{proof} 

\begin{remark}
There is an obvious generalization of the notion of topological or uniform structure $\Mm$, where a definable basis for a topology or a uniformity is considered, not necessarily on $M$, but instead on some definable set $X$. Since this paper focuses on the investigation of open cores, we omit this perspective. Nevertheless, the reader will note that many results, including everything in this section, generalize to this setting.    
\end{remark}

\section{Strongly hereditarily Baire structures} \label{sec:BP}

%We now introduce a strong hereditary form of the Baire property which will hold in every $\NTP$ uniform structure. We isolate the property because it is upfront weaker than $\NTP$, and our main results rely only on it rather than the latter combinatorial restriction. Hence, it would be interesting to know which uniform structures satisfy it. In Section~\ref{sec:LO-structures}, we establish a different descriptive set-theoretic characterization of this property in structures expanding the real ordered group. Recall that a subset of a topological space is nowhere dense if its closure has empty interior. 

We now introduce a definable hereditary form of the Baire property and prove versions of Theorem~\ref{thm:main-intro} and Theorem~\ref{thm:f-intro} where we assume only this property in place of $\NTP$. In Section~\ref{sec:ntp}, we show that every $\NTP$ uniform structure has this property, yielding our main theorems. The reason we isolate this condition is that it is upfront weaker than $\NTP$. Hence, it would be interesting to know which uniform structures satisfy it (see Question~\ref{Q:BP}). In Remark~\ref{rem:simple-BP}, we isolate a slightly simpler more technical condition that is sufficient to derive our theorems and that will be used in Section~\ref{sec:LO-structures}. 

\begin{definition}
Let $\Mm$ be a topological structure. We say that $\Mm$ is \emph{Strongly Hereditarily Baire ($\BP$)} if, for every structure $\Nn\equiv \Mm$, every set $Z\subseteq N^m$, and every upward directed definable family of closed sets $\{ C_b : b\in B\}$ in $N^m$, if $C_b\cap Z$ has empty interior in $Z$ for every $b\in B$, then $\bigcup_{b\in B} C_b$ is nowhere dense in $Z$. 
\end{definition}

We leave it to the reader to check that the definition of $\BP$ is equivalent to asking that, for every set $Z\subseteq N^m$ and upward directed definable covering $\{C_b : b\in B\}$ of $Z$ by closed sets, there exists $b\in B$ such that $C_b$ has nonempty interior in $Z$. This definition is strictly stronger than satisfying the Strong Baire Category Theorem proved in~\cite{comp-set} for so-called type A expansions of $(\R,<,+)$. The notion was motivated by the analysis of $\NTP$ uniform structures laid out in Section~\ref{sec:ntp}, which resembles previous work of Walsberg~\cite{Wal22}.  
%Being $\BP$ should not be confused with topological properties such as being hereditarily Baire (every closed subspace is Baire).
% Complete metric spaces are hereditarily Baire. 

\begin{lemma}\label{lem:SHBP-Fsigma-cons}
Let $\Mm$ be a $\BP$ uniform structure. Then any $\dF$ set is constructible.  
\end{lemma}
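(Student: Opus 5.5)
We argue by contradiction, using the characterization of non-constructible sets from Section~\ref{sec:cons-df}. Let $X\subseteq M^m$ be $\dF$, witnessed by an upward directed definable family $\{C_b : b\in B\}$ of closed sets with $X=\bigcup_b C_b$, and suppose $X$ is not constructible. By Remark~\ref{rem:extensions} we may pass to an $\omega_1$-saturated elementary extension $\Nn\succeq\Mm$. There $X$ is still $\dF$, witnessed by the same formula, and still not constructible. Since $\BP$ quantifies over all $\Nn\equiv\Mm$, it applies in $\Nn$. By Lemma~\ref{lem:cons-noise1}, $X$ is dense and codense in the nonempty type-definable set $Z=\bigcap_{n<\omega} cl(\partial^{(n)}X)$.

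We now apply $\BP$ in $\Nn$ with this $Z$ and the family $\{C_b : b\in B\}$. First we check that each $C_b\cap Z$ has empty interior in $Z$. Suppose instead that some relatively open nonempty $W\cap Z$ (with $W$ open) satisfies $W\cap Z\subseteq C_b\subseteq X$. Since $X$ is codense in $Z$, the point set $W\cap Z\neq\emptyset$ meets $cl(Z\setminus X)$, so the open set $W$ meets $Z\setminus X$. This gives a point of $W\cap Z$ outside $X$, a contradiction. Hence $\BP$ yields that $X=\bigcup_b C_b$ is nowhere dense in $Z$.

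On the other hand, $X$ is dense in $Z$, that is, $Z\subseteq cl(X\cap Z)$. So the closure of $X\cap Z$ in $Z$ is all of $Z$, which has nonempty interior in itself since $Z\neq\emptyset$. This contradicts nowhere density in $Z$, and therefore $X$ is constructible in $\Nn$, hence in $\Mm$ by Remark~\ref{rem:extensions}. To be careful, we read ``nowhere dense in $Z$'' as saying that the closure of $X\cap Z$ in the subspace $Z$ has empty interior in $Z$. Note also that $Z$ is only type-definable, which is allowed because the definition of $\BP$ lets $Z$ be an arbitrary subset.

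The only step needing care is the transfer to the saturated extension. Constructibility and the $\dF$ property must be preserved when passing from $\Mm$ to $\Nn$. This is Remark~\ref{rem:extensions}, together with Fact~\ref{fct:constructible1}, which says non-constructibility is first-order expressible via the iterated frontiers. Everything else is a direct unwinding of the definitions, so I expect no real obstacle beyond this bookkeeping.
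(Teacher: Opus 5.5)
Your proof is correct and is essentially the paper's argument: pass to an $\omega_1$-saturated model, take $Z=\bigcap_{n<\omega} cl(\partial^{(n)}X)$ from Lemma~\ref{lem:cons-noise1}, and observe that $Z$ together with the $\dF$ witnessing family violates \BP{}. You spell out the two checks the paper leaves implicit: each $C_b\cap Z$ has empty interior in $Z$ by codensity, and $X$ is not nowhere dense in $Z$ by density. One small correction: the downward transfer you need (non-constructible in $\Mm$ implies non-constructible in $\Nn$) is justified by Fact~\ref{fct:constructible0}, not Fact~\ref{fct:constructible1}, because each statement $\partial^{(n)}X=\emptyset$ is first-order in the same parameters. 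Non-constructibility itself is a countable conjunction of such statements, not a single first-order sentence.
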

\begin{proof}
Without loss of generality, we may assume that $\Mm$ is $\omega_1$-saturated. 
Suppose that there exists an $\dF$ set $X$ that is not constructible. Let $Z=\bigcap_{n<\omega} cl(\partial^{(n)} X)$. By Proposition~\ref{lem:cons-noise1}, the set $X$ is dense and codense in $Z$. The set $Z$ and the family $\{ C_b : b\in B\}$ witnessing that $X$ is $\dF$ witness the failure of $\BP$. 
\end{proof}   

By Lemma~\ref{lem:cons-dF}, the reverse implication to Lemma~\ref{lem:SHBP-Fsigma-cons} is also true. So, in an $\BP$ uniform structure, the constructible definable sets coincide with those that are $\dF$. Furthermore, they also coincide with the class of $\dG$ sets described in Remark~\ref{rem:dG}.  

Recall that the graph of a continuous partial function with closed domain is closed. As a consequence, if one such function is definable in a topological structure, it is also definable in the open core. In $\BP$ uniform structures, we may improve this fact as follows. We rely on the proof of the known fact that the set of points of continuity of a function between uniform spaces is $G_\delta$. 

%Closed graph of continuous function needs that the codomain be Hausdorff, which we are assuming

\begin{proposition}\label{prop:f_D}
Let $\Mm$ be a $\BP$ uniform structure, and let $f:X\subseteq M^n\rightarrow M^m$ be a definable function, where $X$ is a constructible set. Let $D=\{ x\in X : \text{ $f$ is continuous at $x$\}}$. Then $D$ and the restriction $f|_D$ are constructible (the latter meaning that the graph $\Gamma(f|_D)$ is constructible). In particular, $f|_D$ is definable in the open core $\Mo$.
\end{proposition}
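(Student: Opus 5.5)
Our plan is to write $D$ as a definable $G_\delta$-type set, i.e.\ the complement in $X$ of a $\dF$ set, and then invoke Lemma~\ref{lem:SHBP-Fsigma-cons}. Let $\BU_n$ and $\BU_m$ be definable bases of the product uniformities on $M^n$ and $M^m$. For $V\in\BU_m$, let $O_V$ be the set of $x\in X$ that have a neighborhood $U[x]$, $U\in\BU_n$, with $(f(y),f(z))\in V$ for all $y,z\in U[x]\cap X$. Each $O_V$ is definable and relatively open in $X$, since if $U[x]$ witnesses $x\in O_V$ then every point of the open ball $U[x]$ lies in $O_V$, witnessed by a smaller ball inside $U[x]$. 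The standard argument gives
\[
D=\bigcap_{V\in\BU_m} O_V .
\]
For the inclusion $\supseteq$, take $x$ in every $O_V$; given $V$, choose $W$ with $W\circ W\subseteq V$, and note that $(f(x),f(y))\in W\subseteq V$ for $y$ near $x$. For $\subseteq$, if $f$ is continuous at $x$, choose $W$ with $W\circ W\subseteq V$ and a ball $U[x]$ mapped into $W[f(x)]$; symmetry then gives $(f(y),f(z))\in V$.

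Hence $X\setminus D=\bigcup_{V} (X\setminus O_V)$, and the family $\{X\setminus O_V : V\in \BU_m\}$ is definable. It is upward directed because $V'\subseteq V$ implies $O_{V'}\subseteq O_V$, and $\BU_m$ is downward directed. Each $X\setminus O_V$ is relatively closed in $X$, so $X\setminus O_V=cl(X\setminus O_V)\cap X$. Since $X$ is constructible, it is $\dF$ by Lemma~\ref{lem:cons-dF}, witnessed by an upward directed definable family $\{C_b\}$ of closed sets. The family $\{cl(X\setminus O_V)\cap C_b : V,b\}$ then consists of closed sets, is definable and upward directed, and has union $X\setminus D$. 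So $X\setminus D$ is $\dF$, and Lemma~\ref{lem:SHBP-Fsigma-cons} makes it constructible. Therefore $D=X\setminus(X\setminus D)$ is constructible.

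For the graph, we use that $f|_D$ is continuous on $D$, because continuity at each point of $D$ relative to $X$ implies continuity relative to $D$. Consequently $\Gamma(f|_D)$ is closed in $D\times M^m$, i.e.\ $\Gamma(f|_D)=cl(\Gamma(f|_D))\cap (D\times M^m)$, using the Hausdorff property of $M^m$. Since $D\times M^m$ is constructible, $\Gamma(f|_D)$ is an intersection of a closed set with a constructible set and is therefore constructible. By Fact~\ref{fct:constructible1}, it is then definable in $\Mo$.

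We expect the main obstacle to be the bookkeeping that makes the family witnessing $\dF$ consist of genuinely closed sets in $M^n$ rather than sets relatively closed in $X$. This is handled above by intersecting with the $\dF$ witnesses for $X$, which is also why constructibility of $X$ is assumed. The verification of $D=\bigcap_V O_V$ is routine.
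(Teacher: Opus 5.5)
Your proof is correct and follows essentially the same route as the paper. It uses the same relatively open sets $O_V$ (the paper's $X(U)$) with $\bigcap_V O_V=D$, applies Lemma~\ref{lem:SHBP-Fsigma-cons} to $X\setminus D$, and uses the same closed-graph argument for $\Gamma(f|_D)$. The only difference is bookkeeping: the paper treats $\dF$ and constructibility in the subspace topology of $X$ and then transfers back to $M^n$ via constructibility of $X$. You instead intersect $cl(X\setminus O_V)$ with the $\dF$ witnesses for $X$ from Lemma~\ref{lem:cons-dF}, so that Lemma~\ref{lem:SHBP-Fsigma-cons} applies verbatim in $M^n$, which avoids having to relativize that lemma.
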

\begin{proof}
We show that $D$ is constructible in the subspace topology in $X$. Since $X$ is constructible (in $M^n$), it follows that $D$ is also constructible. Furthermore, it is a standard exercise in continuity that $\Gamma(f|_D)$ is closed in $D\times M^m$. So, if $D$ is constructible, we have in particular that $D\times M^m$ is constructible, and so we conclude that $\Gamma(f|_D)$ is also constructible, completing the proof.

Let $\Bb_X$ and $\Bb_m$ be definable bases for the uniform structures on $X$ and $M^{m}$ respectively. For every $U\in \Bb_m$, let $X(U)=\{ x\in X : \exists V\in \Bb_X, \, (f(y),f(z))\in U \text{ for every } y,z\in V[x]\}$. Clearly every set $X(U)$ is open in $X$. Since $\Bb_m$ is downward directed, the definable family $\{X(U) : U\in \Bb_m\}$ is also downward directed. We sketch the details of the classical argument that $\bigcap_{U\in \Bb_m} X(U)=D$. The inclusion $\bigcap_{U\in \Bb_m} X(U) \subseteq D$ is obvious. Now let $x\in D$ and $U\in \Bb_m$. Let $W\in \Bb_m$ be such that $W\circ W \subseteq U$. Applying continuity, let $V\in \Bb_X$ be such that $V[x] \subseteq f^{-1}(W[f(x)])$. Then, for any $y, z \in V[x]$, we have that $(f(y),f(x))\in W$ and $(f(z),f(x))\in W$, and so $(f(y),f(z))\in U$, meaning that $x\in X(U)$. 

Hence the family $\{ X\setminus X(U) : U\in \Bb_m\}$ witnesses that $X\setminus D$ is $\dF$ in $X$ and so, by Lemma~\ref{lem:SHBP-Fsigma-cons}, the set $X\setminus D$ is constructible in $X$, and so is $D$, as desired. 
\end{proof}

By Proposition~\ref{prop:BCT} in the next section, the above proposition also applies to all $\NTP$ uniform structures. 

\subsection{Definably $\sigma$-compact structures}

We now begin exploring our compactness hypothesis. Recall that a topological space is $\sigma$-compact if it has a countable cover by compact subsets. We introduce a definable analogue of this property. Much like with the notion of $\dF$, our approach is that of substituting countability by ``definable upward directedness". We will later observe that our definition is always satisfied by expansions of $(\R,<)$ and $(\Qp,+,\cdot)$, as well as by definably complete ordered structures.

\begin{definition}\label{def:compact}
A definable set $X$ in a topological structure $\Mm$ is \emph{definably compact} if, for every definable downward directed family $\{C_b :b\in B\}$ of nonempty closed subsets of $X$ (in the subspace topology), it holds that $\bigcap_{b\in B} C_b \neq \emptyset$. 
We say that $X$ is \emph{definably $\sigma$-compact} if it has a covering given by an upward directed definable family of definably compact subsets. We say that $\Mm$ is definably $\sigma$-compact if $M$ is. 
\end{definition}

Our notion of definable compactness has already been studied in various model-theoretic contexts~\cite{johnson14, and-defcomp, JY-padic-comp}. Observe that both notions, definable compactness and definable $\sigma$-compactness, are properties of the underlying theory of the structure. 

In the next fact we compile the properties of definable compactness that we will need. 
\begin{fact}[\cite{johnson14}] \label{fct:comp-basic}
Let $\Mm$ be a topological structure. Let $X$, $Y$ and $C$ be definable sets, with $C\subseteq X$.
\begin{enumerate}[(1)]
    \item If $X$ is definably compact and $C$ is closed in $X$, then $C$ is definably compact. 
    \item If $C$ is definably compact, then $C$ is closed\footnote{For this we require that the topology be Hausdorff, which we are assuming throughout.}. 
    \item If $f:X\rightarrow Y$ is a definable continuous function and $C$ is definably compact (respectively $\sigma$-compact), then $f(C)$ is definably compact (respectively $\sigma$-compact). 
    \item If $X$ and $Y$ are definably compact (respectively $\sigma$-compact), then $X\times Y$ is definably compact (respectively $\sigma$-compact).
\end{enumerate}
\end{fact}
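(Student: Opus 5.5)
The plan is to derive each item of Fact~\ref{fct:comp-basic} directly from Definition~\ref{def:compact}, using only the fact that the topology has a definable basis and is Hausdorff. The underlying principle is that definable downward directed families play the role of filters, and definable compactness states that every such family of nonempty closed sets has nonempty intersection.

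For (1), take a definable downward directed family $\{D_b : b\in B\}$ of nonempty closed subsets of $C$. Since $C$ is closed in $X$, each $D_b$ is closed in $X$, so definable compactness of $X$ gives $\bigcap_b D_b\neq\emptyset$. Hence $C$ is definably compact.

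For (2), suppose $x\in cl(C)\setminus C$. Consider the family $\{cl(W)\cap C : W \text{ a basic open neighborhood of } x\}$, or better, sets of the form $C\setminus V$ for $V$ ranging over the complements of closures of small neighborhoods. Concretely, I would use $\{ C\cap cl(W) : W\ni x \text{ basic}\}$. Each member is nonempty because $x\in cl(C)$, and the family is downward directed after intersecting basic neighborhoods, that is, after refining to a definable directed basis at $x$. Its intersection, however, is contained in $\{x\}$ by the Hausdorff property, since every $y\neq x$ lies outside $cl(W)$ for some neighborhood $W$ of $x$. As $x\notin C$, the intersection is empty, contradicting compactness. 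The delicate point is that a definable basis of neighborhoods at $x$ need not itself be downward directed. I would fix this by indexing with finite tuples of basic sets and taking finite intersections, which remains a definable family.

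For (3), let $\{D_b\}$ be a definable downward directed family of nonempty closed subsets of $f(C)$. Then $\{f^{-1}(D_b)\cap C\}$ is a definable downward directed family of nonempty sets that are closed in $C$ by continuity. Any point in its intersection maps into $\bigcap_b D_b$. The $\sigma$-compact case follows by taking images of the covering family, which stay upward directed.

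For (4), I expect the main obstacle, as with Tychonoff, since one cannot simply project a directed family of closed sets of $X\times Y$. My plan is to let $\{D_b\}$ be such a family and project it to $X$. The sets $cl(\pi(D_b))$ form a definable directed family of nonempty closed subsets of $X$, so they have a common point $x_0$. I then consider the family $\{ cl(D_b\cap (W\times Y))\cap(\{x_0\}\times Y) \}$, where $b$ ranges over $B$ and $W$ over neighborhoods of $x_0$. Viewed in $Y$, this is a definable downward directed family. Its members are nonempty, which is the step needing the compactness of $Y$ again: $cl(D_b\cap(W\times Y))$ meets $\{x_0\}\times Y$ because its projections to $Y$, taken as $W$ shrinks, form a directed family in $Y$ with a common point. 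Compactness of $Y$ then gives a point $y_0$, and $(x_0,y_0)\in\bigcap_b D_b$. If this becomes messy, I would fall back on the original source, Johnson, where these facts are proved, since the statement is cited as a Fact. The $\sigma$-compact version of (4) then follows by taking products of the covering families.
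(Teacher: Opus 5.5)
The paper gives no proof of this Fact (it is quoted from Johnson), and your direct verification of (1)--(4) is correct; it is the standard filter-base argument. In (4) you could streamline by applying definable compactness of $Y$ only once, to the family $\{cl(\pi_Y(D_b\cap(W\times Y)))\}$ indexed jointly by $b$ and the basic neighborhoods $W$ of $x_0$, and then conclude $(x_0,y_0)\in cl(D_b)\cap(X\times Y)=D_b$.

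The one misstep is the ``delicate point'' in (2). For any basis, the basic neighborhoods of a fixed point are automatically downward directed, since the intersection of two of them contains a third. So no fix is needed, and indexing by finite tuples of unbounded length would not give a definable family anyway.
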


\begin{lemma}\label{lem:compdF}
Let $\Mm$ be a definably $\sigma$-compact uniform structure. Every $\dF$ set is definably $\sigma$-compact.
\end{lemma}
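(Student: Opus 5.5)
Let $X\subseteq M^n$ be $\dF$, witnessed by an upward directed definable family $\{C_b : b\in B\}$ of closed subsets of $M^n$ with $X=\bigcup_{b\in B} C_b$. The plan is to intersect these closed sets with the compact pieces of a $\sigma$-compact cover of $M^n$, and to check that the resulting family is again definable, upward directed, made of definably compact sets, and covers $X$.

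First, fix an upward directed definable family $\{K_c : c\in E\}$ of definably compact subsets covering $M$. By Fact~\ref{fct:comp-basic}(4), $M^n$ is definably $\sigma$-compact. Concretely, the family $\{K_{c_1}\times\cdots\times K_{c_n} : (c_1,\ldots,c_n)\in E^n\}$ has the following properties:
\begin{enumerate}[(i)]
    \item it is definable;
    \item it is upward directed, applying directedness of $\{K_c\}$ coordinatewise;
    \item it covers $M^n$;
    \item its members are definably compact by Fact~\ref{fct:comp-basic}(4).
\end{enumerate}
Call this family $\{L_d : d\in D\}$.

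Next, consider the definable family $\{C_b\cap L_d : (b,d)\in B\times D\}$. Each member is closed in $L_d$, since $C_b$ is closed in $M^n$. By Fact~\ref{fct:comp-basic}(1), each member is therefore definably compact. The family is upward directed: given $(b_1,d_1)$ and $(b_2,d_2)$, choose $b_3$ with $C_{b_1}\cup C_{b_2}\subseteq C_{b_3}$ and $d_3$ with $L_{d_1}\cup L_{d_2}\subseteq L_{d_3}$. Then $C_{b_i}\cap L_{d_i}\subseteq C_{b_3}\cap L_{d_3}$ for $i=1,2$.

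Finally, the union of the family is $X$. Each $C_b\cap L_d$ lies in $X$. Conversely, any $x\in X$ lies in some $C_b$ and, since the $L_d$ cover $M^n$, in some $L_d$. Hence $X$ is definably $\sigma$-compact.

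There is no genuine obstacle here. The only point needing care is that definable compactness of $C_b\cap L_d$ is inherited via Fact~\ref{fct:comp-basic}(1) from closedness in $L_d$. The uniform structure hypothesis is not really used, beyond giving a topological structure in which the product topology on $M^n$ is the one considered.
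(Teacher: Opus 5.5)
Your proof is correct and is essentially the paper's own argument: you take the product cover of $M^n$ by definably compact sets via Fact~\ref{fct:comp-basic}(4) and intersect it with the $\dF$ witness family, using Fact~\ref{fct:comp-basic}(1) for compactness of the pieces. You also spell out the upward-directedness and covering checks that the paper leaves implicit.
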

\begin{proof}
Let $X\subseteq M^n$ be a $\dF$ set, witnessed by an upward directed definable family of closed sets $\{ C_b : b\in B\}$. By Fact~\ref{fct:comp-basic} (4),
let $\{ K_d: d\in D\}$ be an upward directed definable covering of $M^n$ by definably compact sets. By Fact~\ref{fct:comp-basic} (1), the family $\{ C_b \cap K_d : b\in B,\, d\in D\}$ witnesses that $X$ is definably $\sigma$-compact. 
\end{proof}

The following lemma distills our main use of definable compactness. 

\begin{lemma}\label{lem:image-dF}
Let $\Mm$ be a definably $\sigma$-compact uniform structure. Let $X$ be $\dF$ and $f:X\rightarrow M^n$ be a definable continuous function. Then $f(X)$ is $\dF$. 
\end{lemma}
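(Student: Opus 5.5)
By Lemma~\ref{lem:compdF}, the set $X$ is definably $\sigma$-compact. So fix an upward directed definable family $\{K_d : d\in D\}$ of definably compact sets with $X=\bigcup_{d\in D} K_d$. Following the proof of Lemma~\ref{lem:compdF}, we may take $K_d$ of the form $C_b\cap K'_e$, where $\{C_b : b\in B\}$ witnesses that $X$ is $\dF$ and $\{K'_e : e \in E\}$ covers the ambient $M^k$ by definably compact sets. This way each $K_d$ is a definably compact subset of $X$. The plan is then to show that the family $\{f(K_d) : d\in D\}$ witnesses that $f(X)$ is $\dF$.

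First I would check that the three requirements are met.
\begin{enumerate}[(i)]
\item \emph{Definability.} The family is definable, since $f$ is definable: $y\in f(K_d)$ iff $\exists x\,(x\in K_d \wedge f(x)=y)$, uniformly in $d$.
\item \emph{Upward directedness.} This transfers from $\{K_d\}$. If $K_{d_1}\cup K_{d_2}\subseteq K_{d_3}$, then $f(K_{d_1})\cup f(K_{d_2})\subseteq f(K_{d_3})$.
\item \emph{Covering.} Since $X=\bigcup_d K_d$, we get $f(X)=\bigcup_d f(K_d)$.
\end{enumerate}

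The remaining point is closedness of each $f(K_d)$ in $M^n$. The restriction $f|_{K_d}\colon K_d\rightarrow M^n$ is a definable continuous function on a definable set, and $K_d$ is definably compact. So Fact~\ref{fct:comp-basic}(3) gives that $f(K_d)$ is definably compact. Then Fact~\ref{fct:comp-basic}(2), which uses the Hausdorff assumption, gives that $f(K_d)$ is closed.

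The only step needing care is this application of Fact~\ref{fct:comp-basic}(3). One must make sure that definable compactness of $K_d$ is meant intrinsically, in the subspace topology, so that it is preserved under the continuous map $f|_{K_d}$ regardless of the ambient space. One must also make sure that $K_d$ is really contained in $X$, where $f$ is defined and continuous. Both are guaranteed by the choice of $K_d=C_b\cap K'_e$ above, together with Fact~\ref{fct:comp-basic}(1), since $C_b\cap K'_e$ is closed in the definably compact set $K'_e$. Once this is in place, the lemma follows.
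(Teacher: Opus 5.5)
Your proposal is correct and follows essentially the same route as the paper. The paper likewise takes an upward directed definable covering of $X$ by definably compact sets from Lemma~\ref{lem:compdF} and concludes via Fact~\ref{fct:comp-basic} (3) and (2) that the images form the required family. Your extra checks (definability, directedness, and that the $K_d$ lie inside $X$) are the routine details the paper leaves implicit.
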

\begin{proof}
Applying Lemma~\ref{lem:compdF}, let $\{ C_b: b\in B\}$ be a definable upward directed covering of $X$ by definably compact sets. By Fact~\ref{fct:comp-basic} (2) and (3), the definable family $\{ f(C_b) : b\in B\}$ witnesses that $f(X)$ is $\dF$. 
\end{proof}

We are now ready to prove the main part of Theorem~\ref{thm:main-intro}, with the $\BP$ assumption in place of $\NTP$. Relying on the work above, we use an approach similar to the one used in proving the main theorems in \cite{MS99} and in \cite{DMS10}. 
%The remainder of the proof is similar to \cite[2.7]{DMS10}.

\begin{theorem}\label{thm:main-BP}
Let $\Mm$ be a $\BP$ uniform structure. If $\Mm$ is definably $\sigma$-compact, then its open core $\Mm^o$ is constructible. 
\end{theorem}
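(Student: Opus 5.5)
The plan is to show that the collection $\Cc$ of definable (in $\Mm$) constructible sets, in all powers $M^n$, is closed under the operations generating the open core, so that $\Cc$ is exactly the collection of sets definable in $\Mo$. By Fact~\ref{fct:constructible1}, $\Cc$ consists of the sets definable without quantifiers from definable closed sets. It is closed under finite Boolean combinations and under Cartesian products, since products of closed sets are closed. It is also closed under permutations of coordinates, because these are homeomorphisms. It contains all basic predicates of $\Mo$, namely the definable closed and open sets. The only remaining operation is projection $\pi:M^{n+1}\to M^n$. If $\Cc$ is closed under projections, then an induction on formulas shows that every $\Mo$-definable set lies in $\Cc$, i.e.\ $\Mo$ is constructible.

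For projections, let $X\subseteq M^{n+1}$ be definable and constructible. By Lemma~\ref{lem:cons-dF}, $X$ is $\dF$, witnessed by an upward directed definable family of closed sets $\{C_b : b\in B\}$. The projection $\pi$ is a definable continuous map, so $\pi|_X$ is continuous. By Lemma~\ref{lem:image-dF}, which uses definable $\sigma$-compactness, $\pi(X)$ is $\dF$. Lemma~\ref{lem:SHBP-Fsigma-cons}, which uses $\BP$, then shows that $\pi(X)$ is constructible. Since $\pi(X)$ is definable in $\Mm$, it lies in $\Cc$.

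Two points need checking. First, Lemma~\ref{lem:image-dF} is stated for a continuous function defined on the $\dF$ set $X$ itself. Here it is applied to the restriction of $\pi$, which is continuous on $X$ with the subspace topology, so there is no difficulty. Second, the constructibility in Lemma~\ref{lem:SHBP-Fsigma-cons} refers to the topology of $M^n$, which is the same topology used by $\Mo$. This agrees with the standing assumption that $\Mo$ is a uniform structure with the same topology.

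The main obstacle is therefore concentrated in the already-proved lemmas: the $\sigma$-compactness step in Lemma~\ref{lem:image-dF}, showing that images of definably compact sets are closed, and the $\BP$ step. The theorem itself is formal bookkeeping. The care needed is in making the induction on formulas precise. One should use Remark~\ref{rem:extensions} to note that constructibility and the $\dF$ property are preserved in elementary extensions. This matters only if one wants the conclusion for the theory of $\Mo$ rather than just for $\Mo$ itself. It can also be avoided, since constructibility of all $0$-definable sets suffices and is preserved under elementary equivalence.
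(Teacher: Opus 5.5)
Your proposal is correct and follows essentially the same route as the paper's proof. You reduce to showing that projections of definable constructible sets are constructible, and then chain Lemma~\ref{lem:cons-dF}, Lemma~\ref{lem:image-dF} and Lemma~\ref{lem:SHBP-Fsigma-cons}; the paper does the same, presenting the reduction as elimination of a single existential quantifier in $\Mo$. Your final remarks about elementary extensions are not needed.
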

\begin{proof}
In $\Mo$ the quantifier-free formulas are exactly those defining constructible sets. It is a standard fact that, in order to prove quantifier elimination, it suffices to eliminate one existential quantifier. Hence, it suffices to show that the projection of a definable constructible set is constructible. We fix a definable constructible set $X \subseteq M^{n}$, and a projection $\pi:M^n\rightarrow M^m$ with $n\geq m>0$.  
By Lemma~\ref{lem:cons-dF}, $X$ is $\dF$. By Lemma~\ref{lem:image-dF}, the set $\pi(X)$ is $\dF$. Finally, by Lemma~\ref{lem:SHBP-Fsigma-cons}, the projection $\pi(X)$ is constructible. 
\end{proof}

The next corollary highlights how our hypothesis of definable $\sigma$-compactness generalizes the use of classical $\sigma$-compactness. 

\begin{corollary}\label{cor:real-sigma-comp}
Let $\Mm$ be a uniform structure. Suppose that there exists a definable upward directed covering of $M$ by compact sets. Then the open core of every $\BP$ expansion of $\Mm$ is constructible. In particular, the open core of every $\BP$ expansion of the following structures is constructible: 
\begin{enumerate}
    \item The ordered group of reals $(\R, +, <)$.
    \item The field $(\Qp, +, \cdot)$ of $p$-adic numbers, for every prime $p$. 
\end{enumerate}
\end{corollary}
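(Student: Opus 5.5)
The plan is to reduce the corollary to Theorem~\ref{thm:main-BP}. Since that theorem applies to any $\BP$ definably $\sigma$-compact uniform structure, it suffices to check that every expansion $\Nn$ of $\Mm$ is a definably $\sigma$-compact uniform structure. $\Nn$ is a uniform structure because the definable basis of entourages of $\Mm$ remains definable in $\Nn$. (For the open core we use the standing assumption stated in the preliminaries; in the two examples it is automatic, since the uniformity is given by an order or a valuation.) So the point is to show that a definable upward directed covering $\{K_d : d\in D\}$ of $M$ by (classically) compact sets witnesses definable $\sigma$-compactness of $\Nn$. This amounts to showing that each compact definable set $K$ is definably compact in $\Nn$.

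The key observation is the following. Let $\{C_b : b\in B\}$ be a downward directed family, definable in $\Nn$, of nonempty closed subsets of $K$. Downward directedness gives the finite intersection property: any finite subfamily contains some $C_{b'}$ below all its members, and $C_{b'}\neq\emptyset$. Classical compactness of $K$ then yields $\bigcap_b C_b\neq\emptyset$, with no use of definability at all. Hence every compact definable set is definably compact in any expansion, and $\Nn$ is definably $\sigma$-compact. Theorem~\ref{thm:main-BP} then gives that $\Nn^o$ is constructible whenever $\Nn$ is $\BP$.

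One subtlety needs care. Definable compactness is a property of the theory, whereas classical compactness is not preserved under elementary extensions. The conclusion, however, only concerns $\Nn$ itself, and the definition of definable $\sigma$-compactness is checked in $\Nn$. The $\BP$ hypothesis already quantifies over all $\Nn'\equiv\Nn$, and Theorem~\ref{thm:main-BP} only needs the hypotheses in the given structure (its proof passes to saturated extensions only through Lemma~\ref{lem:SHBP-Fsigma-cons}, which uses $\BP$, and the $\dF$ property transfers by Remark~\ref{rem:extensions}). So there is no obstacle here.

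It remains to exhibit the coverings in the two examples, which I expect to be routine. For $(\R,<,+)$, take the intervals $[-b,b]$ with $b>0$: they are compact, cover $\R$, and are upward directed, and the family is definable. For $(\Qp,+,\cdot)$, take the balls $\{x : v(x)\geq v(b)\}=b\Z_p$ with $b\in\Qp^\times$. These are compact, since $\Z_p$ is compact and multiplication by $b$ is a homeomorphism; they are nested, cover $\Qp$, and are definable because the valuation ring $\Z_p$ is definable in the field language. The only step needing a citation is this last definability (for instance, via the classical fact that $\Z_p$ is existentially definable, e.g.\ for $p\neq 2$ as $\{x:\exists y\, y^2=1+px^2\}$), and it is the only point that requires any thought.
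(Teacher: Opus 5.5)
Your proposal is correct and follows essentially the same route as the paper. Compact sets stay compact in any expansion and are definably compact by the finite intersection property, so the given covering witnesses definable $\sigma$-compactness and Theorem~\ref{thm:main-BP} applies; you additionally spell out the witnessing coverings $\{[-b,b]\}$ and $\{b\Z_p\}$ (including the definability of $\Z_p$) for the two examples, which the paper leaves implicit.
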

\begin{proof}
Topological compactness is trivially maintained after passing to any expansion. Furthermore, note that any compact set is definably compact. Hence any expansion of $\Mm$ is definably $\sigma$-compact, witnessed by the same upward directed covering, and the result follows from Theorem~\ref{thm:main-BP}.
\end{proof}

Constructibility has strong topological consequences in expansions of ordered groups. Some of these will be presented in Section~\ref{sec:LO-structures}, with a focus on expansions of $(\R,<,+)$. In this setting, a consequence is that graphs of definable functions $f:U\subseteq \R^n \rightarrow \R^m$ are $F_\sigma$, and so one may show that they are Baire class one (see~\cite{kechris-book}), which implies that the set of points of continuity of $f$ is comeager ---in particular dense--- in $U$ constructible.
% From Kechris Book (Thm 24.14): Let X,Y be metrizable, with Y separable, and $f:X \rightarrow Y$ be of Baire class 1. Then the set of points of continuity of $f$ is a comeager $G_\delta$ set. 
Definable versions of this result have been proved, occasionally under the name of generic continuity, in various tame settings~\cite{Wal22, def-baire, HW-tetrachotomy}.
%G_delta sets subsets of R^n are Baire, so constructible sets are Baire, namely the BCT can be applied to them. (A subset of R^n is G_delta iff it is completely metrizable.)
Nevertheless, if one considers, not just constructibility, but being $\BP$, then generic continuity can be sharpened: we show that $f$ is also (finitely) piecewise continuous. This improves a more general theorem of Jayne and Rogers~\cite{Jayne-Rogers} applied to this setting, which shows that $f$ must be countably piecewise continuous.  

%For proving the "generic continuity part", the following proof uses ideas from Lemma 3.11 in ``Definably complete Baire structures and Pfaffian closure", where it is proved for comeager instead of open dense, in def. complete expansions of fields with the assumption that the graph of $f$ is an $\Ff_\sigma$ set.

\begin{theorem} \label{thm:f-BP}
Let $\Mm$ be a definably $\sigma$-compact $\BP$ uniform structure. Let $f:X\rightarrow Y$ be a function definable in the open core. Let $X_0=X$ and, for every $n$, let $X_{n+1}$ be the closure in $X_n$ of the set of points of discontinuity of $f|_{X_n}$. Then:
\begin{enumerate}
     \item $X_j$ has empty interior in $X_i$ for every $i<j$.
     \item There exists $m$ such that $X_m=\emptyset$.   
\end{enumerate}
In particular, $f$ is both generically and piecewise continuous. 
\end{theorem}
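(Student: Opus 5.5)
The plan is to work throughout with the graph of $f$, using definable $\sigma$-compactness of the codomain to cover $X$ by an upward directed definable family of closed sets on each of which $f$ is continuous. Part (1) then follows from $\BP$ applied to open subsets of $X_n$, and part (2) from $\BP$ applied to the type-definable set $\bigcap_n X_n$ together with saturation.

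\emph{Preliminaries.} By Theorem~\ref{thm:main-BP}, $\Mo$ is constructible, so $X$, $Y$ and $\Gamma(f)$ are constructible. Each $X_n$ is definable in $\Mo$: the discontinuity set of $f|_{X_n}$ is first-order definable from $\Gamma(f)$ and $X_n$ using a definable basis of the uniformity, which exists in $\Mo$ by our standing assumption. Hence every $X_n$ is constructible, and each $X_{n+1}$ is closed in $X_n$, so all $X_n$ are closed in $X$.

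\emph{Key decomposition.} Fix $n$. By Lemma~\ref{lem:cons-dF}, $\Gamma(f|_{X_n})$ is $\dF$, witnessed by an upward directed definable family $\{C_b\}$ of closed sets. Let $\{K_d\}$ be an upward directed definable covering of $M^m$ by definably compact sets. Put $A_{b,d}=\pi(C_b\cap (M^{n'}\times K_d))$, where $\pi$ projects onto the domain.
\begin{itemize}
\item By definable compactness of $K_d$, $A_{b,d}$ is closed: the projection of a closed set along a definably compact factor is closed, a routine definable tube-lemma argument using the definition via downward directed families.
\item The family $\{A_{b,d}\}$ is upward directed and covers $X_n$.
\item The restriction $f|_{A_{b,d}}$ has closed graph contained in $A_{b,d}\times K_d$, so it is continuous. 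This is the definable version of ``closed graph into a compact space implies continuity'', again proved from Definition~\ref{def:compact}.
\end{itemize}
The same decomposition works with $X$ in place of $X_n$. I expect the two definable-compactness verifications to be the main technical points of the whole proof; both should follow by standard arguments.

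\emph{Part (1).} Let $W\subseteq X_n$ be nonempty and open in $X_n$. Apply $\BP$ with $Z=W$ to the family $\{A_{b,d}\}$, which covers $W$. Some $A_{b,d}$ then has nonempty interior in $W$, so there is a nonempty $V$, open in $X_n$, with $V\subseteq A_{b,d}$. On $V$, $f|_{X_n}$ agrees with the continuous map $f|_{A_{b,d}}$ and $V$ is open in $X_n$, so $f|_{X_n}$ is continuous at every point of $V$. Thus the continuity points of $f|_{X_n}$ are dense in $X_n$, and $X_{n+1}$ is closed with empty interior in $X_n$. For $i<j$ we have $X_j\subseteq X_{i+1}$, which is closed and nowhere dense in $X_i$, so $X_j$ has empty interior in $X_i$.

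\emph{Part (2).} Pass to an $\omega_1$-saturated elementary extension; the $X_n$ are defined uniformly in the parameters, so this is harmless. Suppose every $X_n$ is nonempty. Then $Z=\bigcap_n X_n$ is a nonempty type-definable set. Apply $\BP$ to $Z$ and the family $\{A_{b,d}\}$ built for $X$ itself: some $A=A_{b,d}$ has nonempty interior in $Z$. So there is an open $O$ with $\emptyset\neq O\cap Z\subseteq A$. The definable sets $(O\cap X_n)\setminus A$ are decreasing with empty intersection, so by saturation $O\cap X_n\subseteq A$ for some $n$. Then $f|_{X_n}$ agrees on the set $O\cap X_n$, which is open in $X_n$, with the continuous map $f|_A$. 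Hence $f|_{X_n}$ is continuous there, and since $X_{n+1}$ is the closure in $X_n$ of the discontinuity set, $X_{n+1}\cap O=\emptyset$. This contradicts $\emptyset\neq O\cap Z\subseteq O\cap X_{n+1}$, so some $X_m$ is empty. Since each $X_n$ is definable by a first-order formula, $X_m=\emptyset$ transfers back down to the original structure.
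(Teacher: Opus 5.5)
Your proof is correct and follows essentially the paper's route: cover $X$ (or $X_n$) by an upward directed definable family of closed sets on each of which $f$ is continuous, apply $\BP$ to nonempty open subsets of $X_n$ for (1), and apply $\BP$ to $\bigcap_n X_n$ together with $\omega_1$-saturation for (2). In (2), take $O$ to be a definable ball so that the sets $(O\cap X_n)\setminus A$ are definable.

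The differences are minor. You bound only the codomain coordinate by $K_d$, which is why you need a definable tube lemma and the closed-graph-into-compact lemma. The paper instead uses Lemma~\ref{lem:compdF} to get definably compact graph pieces, whose projections are closed by Fact~\ref{fct:comp-basic}. In (1), you obtain an open dense set of continuity points directly, whereas the paper goes through constructibility of the discontinuity set and Lemma~\ref{lem:cons-noise0}.
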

\begin{proof}
We show that $X_1$ is nowhere dense in $X=X_0$. Since, for every $n$, the function $f|_{X_n}$ is clearly definable in the open core, this yields that $X_j$ is nowhere dense in $X_i$ for every $i<j$. 

We first show that, if the graph $\Gamma(f)$ of $f$ is definably compact, then $f$ is continuous. Let $\BU_X$ and $\BU_Y$ denote definable bases for the uniformities in $X$ and $Y$ respectively. Towards a contradiction, suppose that $f$ is not continuous at a point $x\in X$. Then there exists some $V\in \BU_Y$ satisfying that, for every $U\in \BU_X$, the set $\Gamma_U=\{ (y,f(y))\in \Gamma(f) : (x,y)\in U \text{ and } (f(x),f(y))\notin V\}$ is nonempty. Consider the downward directed definable family of closed sets $\Dd=\{ cl(\Gamma_U) : U\in \BU_X\}$. By definable compactness, there exists $(z,f(z))\in \bigcap \Dd$. By Hausdorffness, note that $z=x$. On the other hand, clearly $(X\times V[f(x)]) \cap \Gamma_U = \emptyset$ for every $U\in \BU_X$, contradiction.

%Now, for the general case, let $Z\subseteq X$ denote the set of points where $f$ is discontinuous. Observe that this set is definable in the open core and so, by Theorem~\ref{thm:main-tp2}, it is constructible. We will show that $Z$ has empty interior in $X$. By Lemma~\ref{lem:cons-noise0}, its closure $cl(Z)$ has also empty interior in $X$, and the theorem follows. 

%Towards a contradiction, suppose that $Z$ has nonempty interior in $X$. By restricting $f$ to the interior of $Z$ if necessary, we may assume that $f$ is nowhere continuous. Now, by Theorem~\ref{thm:main-tp2}, the graph $\Gamma(f)$ of $f$ is constructible. By Lemma~\ref{lem:cons-dF}, $\Gamma(f)$ is $\dF$. By Lemma~\ref{lem:compdF}, there exists an upward directed definable family $\{C_b : b\in B \}$ of definably compact sets which covers $\Gamma(f)$. For each $b\in B$, let $\pi(C_b)$ be the projection of $C_b$ into $X$. By Fact~\ref{fct:comp-basic}, each set $\pi(C_b)$ is closed. By Proposition~\ref{prop:BCT}, there exists $b\in B$ such that $\pi(C_b)$ has nonempty interior in $X$. Let $B=\pi(C_b)$ and $x$ be a point in the interior of $B$. Then $f|_B$ has a definably compact graph and is discontinuous at $x$, contradiction.  

Now let $Z\subseteq X$ denote the set of points where $f$ is discontinuous. Observe that this set is definable in the open core and so, by Theorem~\ref{thm:main-BP}, it is constructible. We show that $Z$ has empty interior in $X$. By Lemma~\ref{lem:cons-noise0}, it then follows that $X_1=cl(Z)\cap X$ has also empty interior in $X$. 

Towards a contradiction, suppose that $Z$ has nonempty interior in $X$. By restricting $f$ to the interior of $Z$ if necessary, we may assume that $f$ is nowhere continuous. Now, by Theorem~\ref{thm:main-BP}, the graph $\Gamma(f)$ of $f$ is constructible. By Lemma~\ref{lem:cons-dF}, $\Gamma(f)$ is $\dF$. By Lemma~\ref{lem:compdF}, there exists an upward directed definable family $\{C_b : b\in B \}$ of definably compact sets which covers $\Gamma(f)$. For each $b\in B$, let $\pi(C_b)$ be the projection of $C_b$ into $X$. By Fact~\ref{fct:comp-basic}, each set $\pi(C_b)$ is closed. Since $\Mm$ is $\BP$, there exists $b\in B$ such that $\pi(C_b)$ has nonempty interior in $X$. Let $x$ be a point in the interior of $\pi(C_b)$. Then $f|_{\pi(C_b)}$ has a definably compact graph and is discontinuous at $x$, contradiction.

We now prove $(2)$ by assuming, towards a contradiction, that $X_n\neq \emptyset$ for every $n$. Without loss of generality, we may assume that $\Mm$ is $\omega_1$-saturated. Let $K$ denote the (nonempty) type-definable set $K=\bigcap_{n<\omega} X_n$. Following the notation in the proof of statement (1), let $\{C_b : b\in B \}$ be an upward directed definable covering of $\Gamma(f)$ by definably compact sets, and $\{\pi(C_b) : b\in B \}$ be the corresponding family of (definably compact and closed) projections into $X$. Since $\Mm$ is $\BP$, there exists some $b\in B$ such that $K\cap \pi(C_b)$ has nonempty interior in $K$. Let $x\in K$ and $U\in \BU_X$ be such that $K\cap U[x] \subseteq \pi(C_b)$. By saturation, there must exist some $n$ such that $X_n\cap U[x] \subseteq \pi(C_b)$. Now, by Fact~\ref{fct:comp-basic}, note that $\Gamma(f)\cap C_b \cap (X_n\times Y)$ is definably compact, and so the restriction of $f$ to $X_{n} \cap \pi(C_b)$ is continuous. Hence $f|_{X_{n}}$ is continuous on $X_{n}\cap U[x]$, however this contradicts the fact that $x\in X_{n+1}$. 
\end{proof}

\begin{remark}\label{rem:simple-BP}
Close analysis of the proofs of Theorems~\ref{thm:main-BP} (through Lemma~\ref{lem:SHBP-Fsigma-cons}) and~\ref{thm:f-BP} yields that we do not require the full property of being $\BP$ in order to prove these results in a definably $\sigma$-compact uniform structure $\Mm$. Instead, we only need that there exists an $\omega_1$-saturated elementary extension $\Nn$ of $\Mm$ with the following property. If $X$ is a $\dF$ set in $\Mm$, whose interpretation in $\Nn$ we denote by $X^{\Nn}$, and $Z\subseteq X^{\Nn}$ is a type-definable set in $\Nn$ defined by a countable type over $M$, then there exists a definably compact subset $C\subseteq X^{\Nn}$ in $\Nn$ such that $C\cap Z$ has nonempty interior in $Z$.  

Suppose that $\Mm$ expands an ordered abelian group. A definable set $X$ is $D_\Sigma$ in the sense of~\cite{DMS10} if there exists a covering of $X$ by a definable family $\{C_{r,s} : (r,s)\in M^{>0}\times M^{>0}\}$ of closed and bounded sets satisfying that $C_{r,s}\subseteq C_{r',s'}$ whenever $r\leq r'$ and $s\geq s'$. Every definable constructible set is $D_\Sigma$~\cite[1.10]{DMS10}. As a consequence, under this assumption on $\Mm$, in the above paragraph one may consider $X$ to be $D_\Sigma$ rather than the more general notion of $\dF$.
\end{remark}
%Above we say the $C$ are definably compact instead of closed in order to allow the proof of generic piecewise continuity (Theorem~\ref{thm:f-BP}). If we drop the compactness assumption then we can probably prove a similar theorem to~\ref{thm:Baire-class-one} in expansion of $(\R,<,+)$ (because families witnessing $\dF$ are indexed by $\R$ and increasing at t decreases). 

We will use the above remark to prove our descriptive set-theoretic version of Theorems~\ref{thm:main-intro} and~\ref{thm:f-intro} in the Euclidean setting (Theorem~\ref{thm:Baire-class-one}).

\section{\NTP{} uniform structures} \label{sec:ntp}

In this section we prove the main part of Theorem~\ref{thm:main-intro}, as well as Theorem~\ref{thm:f-intro}, by showing that $\NTP$ uniform structures are $\BP$, and then applying the work in Section~\ref{sec:BP}. We begin with a major technical lemma of this paper. Compare to~\cite[Theorem 4.1]{Wal22}, where Walsberg proves roughly the same result but assuming that $Z$ is definable. Having the lemma for an arbitrary $Z$ is crucial in our proofs. 
%on constructible sets (theorem and prop) and on Cantor-Bendixson rank (result). 

In the proof of the next lemma we generalize our $U$-ball notation as follows. Given a uniform structure $\Mm$, a set $X\subseteq M^n$, and basic entourages $U,V$ for the uniformity on $M^d$, let $(U\setminus V)[X]=U[X]\setminus V[X]$. 

\begin{lemma}\label{lem:noise-NTP2}
Let $\Mm$ be a uniform structure and let $Z\subseteq M^d$ be a nonempty subset. Let $\Cc$ be a definable family of closed subsets of $M^d$ with the following property. For every finite collection $W_1,\ldots W_k$ of nonempty subsets of $Z$ that are open in the subspace topology, there exists some $C\in \Cc$ such that
\begin{enumerate} 
    \item $C \cap W_i\neq \emptyset$ for each $i\leq k$,
    \item $C$ is codense in $Z$. 
\end{enumerate}
Then $Z$ has \TP.
\end{lemma}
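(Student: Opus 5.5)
The plan is to build, for every $n,m$, an array witnessing $\TP$ for a single fixed formula, with every path realized by a point of $Z$. I will use \emph{annuli around a member of $\Cc$}. Let $\Bb$ be a definable basis of open entourages for $M^d$. Consider the formula
\[
\varphi(x; c, U, V) \;:=\; x\in (U\setminus V)[C_c] \;=\; U[C_c]\setminus V[C_c],
\]
where $c$ ranges over the index set of $\Cc$ and $U,V$ over $\Bb$. This is a single partitioned formula. Row $i$ will use a single set $C_i\in\Cc$ together with a chain of entourages
\[
U_{i,1}\supseteq V_{i,1}\supseteq U_{i,2}\supseteq V_{i,2}\supseteq\cdots\supseteq U_{i,m}\supseteq V_{i,m}.
\]
Since $U_{i,j'}\subseteq V_{i,j}$ for $j<j'$, the annuli $(U_{i,j}\setminus V_{i,j})[C_i]$ are pairwise disjoint. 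So each row is $2$-inconsistent for free, and the whole difficulty lies in the consistency of paths inside $Z$.

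The construction proceeds by induction on $i\le n$. I maintain, for each $\eta:\{1,\dots,i-1\}\to\{1,\dots,m\}$, a nonempty set $W_\eta$ that is open in $Z$ and contained in $\bigcap_{l<i}\varphi(M;C_l,U_{l,\eta(l)},V_{l,\eta(l)})$; at the start $W_\emptyset=Z$. At stage $i$ there are finitely many sets $W_\eta$. By hypothesis, pick $C_i\in\Cc$ that meets every $W_\eta$ and is codense in $Z$. Then choose the entourages of row $i$ by an inner induction on $j$.

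Given $U_{i,j}$, proceed as follows for each $\eta$:
\begin{enumerate}
\item Pick $z_\eta\in W_\eta\cap C_i$. Since $U_{i,j}[z_\eta]\cap W_\eta$ is open in $Z$ and contains $z_\eta\in Z$, codensity of $C_i$ gives $y_\eta\in (W_\eta\cap U_{i,j}[z_\eta])\setminus C_i$.
\item Since $C_i$ is closed and $y_\eta\notin C_i$, there is $E_\eta\in\Bb$ with $E_\eta[y_\eta]\cap C_i=\emptyset$.
\item Using downward directedness over the finitely many $\eta$, together with the property $F\circ F\subseteq E$ and symmetry, choose $V_{i,j}\subseteq U_{i,j}$ and a further $F\in\Bb$ such that $F[y_\eta]\cap V_{i,j}[C_i]=\emptyset$ and $F[y_\eta]\subseteq U_{i,j}[C_i]$ for all $\eta$. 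The second inclusion is possible because $U_{i,j}[C_i]$ is open and contains $y_\eta$, since $(z_\eta,y_\eta)\in U_{i,j}$.
\item Put $W_{\eta^\frown j}=W_\eta\cap F[y_\eta]$. This is a nonempty open subset of $Z$ (it contains $y_\eta$) and lies in the $(i,j)$ annulus.
\end{enumerate}
Finally choose $U_{i,j+1}\subseteq V_{i,j}$ and continue.

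After $n$ rows, every path $\eta$ has $W_\eta\neq\emptyset$ with $W_\eta\subseteq Z\cap\bigcap_{i}\varphi(M;C_i,U_{i,\eta(i)},V_{i,\eta(i)})$. So paths are consistent with $Z$ and rows are $2$-inconsistent, which gives that $Z$ has $\TP$ via $\varphi$ (and indeed that $Z$ admits the corresponding inp-pattern).

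The step I expect to be the main obstacle is the uniformity of the radii. The entourages $U_{i,j},V_{i,j}$ must be the same for all of the finitely many branches $\eta$ at stage $i$, while the points $y_\eta$ witnessing each annulus are chosen branchwise. This is handled by finiteness and downward directedness of $\Bb$, together with the composition axiom (d) to pass from ``$E[y]$ misses $C$'' to ``a smaller ball around $y$ misses $V[C]$''. Arbitrary $Z$ causes no trouble, since only open-in-$Z$ sets and the codensity of $C$ in $Z$ are used, never definability of $Z$.
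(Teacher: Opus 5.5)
Your proof is correct and follows essentially the same route as the paper. Both arguments use annuli $(U\setminus V)[C_i]$ around a row-wise chosen $C_i\in\Cc$ that meets every earlier path set, take decreasing entourages to get $2$-inconsistency, and use closedness plus codensity of $C_i$ to carve a nonempty open-in-$Z$ set inside each path intersection. The differences are cosmetic: the paper takes $V_{i,j}=U_{i,j+1}$ and leaves the uniform choice of entourages across the branches $\eta$ implicit, whereas you spell it out via downward directedness and the axiom $V\circ V\subseteq U$.
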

\begin{proof}
Let $\BU$ be a definable basis for the uniformity on $M^d$. 
For each $n$ and $m$, we define recursively an array of tuples $(C_{i}, U_{i,j}) \in \Cc\times\BU$, for $i\leq n$ and $j\leq m+1$, such that the family of sets $(U_{i,j}\setminus U_{i,j+1})[C_i] = U_{i,j}[C_i]\setminus U_{i,j+1}[C_i]$ witnesses that $Z$ has \TP. For a fixed $i$, we will pick the family $\{ U_{i,j} : j\leq m+1\}$ to be decreasing, which ensures that the sets $\{(U_{i,j}\setminus U_{i,j+1})[C_i] : j \leq m\}$ are pairwise disjoint, and so it will suffice to check the consistency condition. We will also pick the array such that, for every function $\eta:\{1,\ldots, n\}\rightarrow \{1,\ldots, m\}$, the intersection $\bigcap_{i=1}^{n} (U_{i,\eta(i)}\setminus U_{i,\eta(i)+1})[C_i]$ not only is nonempty, but has in fact nonempty interior in $Z$. 

Now let $C_1\in \Cc$ be any set such that $C_1\cap Z \neq \emptyset$ and $C_1$ is codense in $Z$, which exists by applying the lemma assumptions with $W_1=Z$. Let $U_{1,1}\in \BU$ be any basic entourage. Since $C_1$ has empty interior in $Z$, there exists some $x\in Z \cap U_{1,1}[C_1]\setminus C_1$. Since $C_1$ is closed, there exists some $V\in \BU$ such that  $V[x] \cap C_1 = \emptyset$. Using properties of entourages, if we pick $U_{1,2}\in \BU$ with $U_{1,2} \circ U_{1,2}\subseteq V$, it is easy to see that $U_{1,2}[x] \cap U_{1,2}[C_1] = \emptyset$. In particular we get that $x$ is in the interior in $Z$ of the set $(U_{1,1}\setminus U_{1,2})[C_1]$. We may furthermore clearly pick $U_{1,2}$ small enough to satisfy $U_{1,2}\subseteq U_{1,1}$. Now, if $m>1$, we may recursively continue the process and pick, for any $U_{1,j} \in \BU$, some set $U_{1,j+1}\in \BU$ with $U_{1,j+1}\subseteq U_{1,j}$ satisfying that $(U_{1,j}\setminus U_{1,j+1})[C_1]$ has nonempty interior in $Z$. 

Now suppose that $n>1$. We assume that we have tuples $(C_{i}, U_{i,j})$, for $i<n$ and $j\leq m+1$, satisfying that, for every $\eta: \{1,\ldots, n-1\} \rightarrow \{1,\ldots m\}$, the interior in $Z$ of the intersection 
$
\bigcap_{i=1}^{n-1} (U_{i,\eta(i)}\setminus U_{i,\eta(i)+1})[C_i] 
$
is nonempty. We denote this interior by $W_\eta$. 

By the lemma assumptions we may pick $C_{n}\in \Cc$ such that $C_n\cap W_{\eta} \neq \emptyset$ for every $\eta$ and $C_n$ is codense in $Z$. Let $U_{n,1}\in \BU$ be any basic entourage. Now, in analogy to the case $n=1$, using both that $C_n$ is closed and that it has empty interior in $Z$ ---and in particular in every set $W_\eta$---, we may recursively pick, for $U_{n,j}\in \BU$, a subset $U_{n,j+1}\in \BU$ satisfying that, for every $\eta: \{1,\ldots, n-1\} \rightarrow \{1,\ldots m\}$, the intersection $W_{\eta} \cap (U_{n,j}\setminus U_{n,j+1})[C_n]$ has nonempty interior in $Z$. 
\end{proof}

Walsberg proves \cite[Theorem 4.1]{Wal22} assuming only that $\BU$ is a subfamily of some definable family of sets. Note that the above proof remains valid with this weaker assumption. 

%NB: If $Z$ is definable then we only need that the intersections are closed since we can pass from X to Y. 

%Above lemma can perhaps be proved with constructible instead of closed, as long as there is a uniform bound the in description of the constructible sets.  

We may now establish the connection between $\NTP$ and $\BP$. Compare to~\cite[Corollary 4.2]{Wal22}.

\begin{proposition}[$\NTP$ Baire category theorem]\label{prop:BCT}
Let $\Mm$ be an \NTP{} uniform structure. Then $\Mm$ is strongly hereditarily Baire ($\BP$).
\end{proposition}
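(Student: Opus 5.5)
We argue by contraposition and apply Lemma~\ref{lem:noise-NTP2}. Suppose $\Mm$ is not $\BP$. Then there are $\Nn\equiv\Mm$, a set $Z\subseteq N^m$, and an upward directed definable family $\Cc=\{C_b : b\in B\}$ of closed sets such that each $C_b\cap Z$ has empty interior in $Z$, but $\bigcup_b C_b$ is not nowhere dense in $Z$. Since $\NTP$ is a property of the theory, $\Nn$ is an $\NTP$ uniform structure, with the uniformity given by the same defining formula. It therefore suffices to derive that $Z$ has $\TP$ in $\Nn$, which is a contradiction.

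First we shrink $Z$. Non-nowhere-density gives a nonempty set $W_0$, open in $Z$, with $W_0\subseteq cl(\bigcup_b C_b\cap Z)$. Replace $Z$ by $Z'=W_0$. This replacement is harmless for two reasons.
\begin{itemize}
\item Each $C_b\cap Z'$ still has empty interior in $Z'$, because open subsets of $Z'$ are open in $Z$.
\item $\bigcup_b C_b\cap Z'$ is dense in $Z'$.
\end{itemize}
Also, $\TP$ for $Z'$ implies $\TP$ for $Z$, since consistency with the smaller set $Z'$ implies consistency with $Z$.

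Next we check the hypotheses of Lemma~\ref{lem:noise-NTP2} for $Z'$ and $\Cc$. Let $W_1,\dots,W_k$ be nonempty sets open in $Z'$. By density, for each $i$ there are $b_i\in B$ and a point of $C_{b_i}\cap W_i$. By upward directedness, choose $b$ with $C_{b_1}\cup\cdots\cup C_{b_k}\subseteq C_b$; then $C_b\cap W_i\neq\emptyset$ for every $i$. It remains to see that $C_b$ is codense in $Z'$, meaning $cl(Z'\setminus C_b)\supseteq Z'$. This follows because $C_b\cap Z'$ has empty interior in $Z'$: every nonempty open subset of $Z'$ meets $Z'\setminus C_b$.

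Lemma~\ref{lem:noise-NTP2} now shows that $Z'$ has $\TP$. The witnessing formula is the definable family $(U\setminus V)[C]$, with $C\in\Cc$ and $U,V\in\BU$, so this is genuinely $\TP$ of the structure $\Nn$, contradicting $\NTP$. The only point needing care is this final bookkeeping: Lemma~\ref{lem:noise-NTP2} is stated for an arbitrary uniform structure, so it applies to $\Nn$ directly. Beyond that, the argument is a direct verification.
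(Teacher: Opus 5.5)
Your proof is correct and follows the paper's argument. You pass to a nonempty relatively open subset of $Z$ on which $\bigcup\Cc$ is dense, use upward directedness to meet finitely many relatively open sets with a single $C_b$, and invoke Lemma~\ref{lem:noise-NTP2} to get $\TP$ in $\Nn$, contradicting that $\NTP$ is a property of the theory; you also spell out the routine checks that the paper leaves implicit (empty interior persisting on the open subset, codensity, and $\Nn$ remaining a uniform structure).
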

\begin{proof}
Let $X\subseteq M^n$ be a set and $\Cc$ be an upward directed definable family of closed sets that are each nowhere dense in $X$, we must show that $\bigcup \Cc$ is nowhere dense in $X$. Suppose that $\bigcup \Cc$ is dense in some nonempty open subset $X'\subseteq X$. We may assume that $X'=X$. Using that each $C\in \Cc$ is nowhere dense in $X$, we apply Lemma~\ref{lem:noise-NTP2} to derive that $\Mm$ has $\TP$. Since $\NTP$ is a property of the underlying theory we conclude that $\Mm$ is $\BP$.  
\end{proof}

\begin{proof}[Proof of Theorems~\ref{thm:main-intro} and~\ref{thm:f-intro}]
Theorem~\ref{thm:main-intro} is an immediate consequence of Proposition~\ref{prop:BCT}, Theorem~\ref{thm:main-BP}, and Corollary~\ref{cor:real-sigma-comp}, with the sole remaining task of showing that every definably complete expansion of an ordered group is definably $\sigma$-compact, which is done in Lemma~\ref{lem:intervals-comp}. Theorem~\ref{thm:f-intro} is an immediate consequence of Proposition~\ref{prop:BCT} and Theorem~\ref{thm:f-BP}.
\end{proof}

\begin{remark}\label{rem:TP2-cons}
\begin{enumerate}
\item There are constructible structures over $(\R,<,+)$ that are not combinatorially tame. Let $(\R,+,<, \Z)^\#$ denote the expansion of $(\R,<,+)$ by every subset of $\Z^n$ for every $n$. By the example in~\cite[page 56]{FM01-sparse}, this structure is locally o-minimal. Using Theorem 3.2 in~\cite{miller05-tame}, it is easy to see that it is constructible. On the other hand, $(\R,+,<, \Z)^\#$ clearly fails to satisfy any Shelah-style combinatorial tameness condition, since, for example, it defines every finite subset of $\Z^n$ uniformly.  
%Let $B\subseteq \N^2$ be a set such that, for any finite subset $F\subseteq \N$, there is $a\in \N$ such that $B_a=\{ b\in \N : (a,b)\in B\}=F$.
%There are constructible structures over $(\R,<,+)$ that are not combinatorially tame. By~\cite{FM01-sparse}, the structure $(\overline{\R}, 2^\Z)^\#$ is d-minimal, in particular it is constructible. On the other hand, it defines an isomorphic copy of $(\Z,+,\cdot)$ as well as every finite subset of $2^\Z$ uniformly. In particular its theory is undecidable and it does not satisfy any Shelah-style combinatorial tameness condition

\item The condition of definable $\sigma$-compactness in Theorem~\ref{thm:main-intro} is not necessary. The structure $(\Q,+,<,I)$, where $I=(-\pi,\pi)\cap \Q$, is both $\NIP$ and constructible (see Theorem~\ref{thm:nip-intro}). However, it is not definably $\sigma$-compact. We sketch the argument for this. It is easy to see that every interval in $(\Q,+,<,I)$ is not definably complete, and in particular not definably compact. So every definably compact subset of $\Q$ has empty interior. By Proposition~\ref{prop:BCT}, it follows that $(\Q,+,<,I)$ does not admit an upward directed definable covering of $\Q$ by definably compact sets. 

\item On the other hand, Theorem~\ref{thm:main-intro} is false if we simply drop either the $\NTP$ or the definable $\sigma$-compactness assumption. The latter is witnessed by the following example brought to the author's attention by Johnson and Goodrick\footnote{A similar example is discussed in~\cite{goo10}}. The ordered group $(\Z_{(2)},+,<)$ of integers localized at $2$ is dp-minimal (see Proposition 5.1 in \cite{JSW-valued-fields}) ---hence $\NIP$--- and clearly interdefinable with its open core, however the set $2\Z_{(2)}$ of elements divisible by $2$ is dense and codense in $\Z_{(2)}$, and so it is not constructible. 
\end{enumerate}
\end{remark}

By Proposition~\ref{prop:BCT} and Lemma~\ref{lem:SHBP-Fsigma-cons}, every $\dF$ set in an $\NTP$ uniform structure is constructible. In fact, this result remains true if one only assumes $\NTP$ localized in the usual way to $X$. Given an expansion of $(\R,<,+)$, a dense $\omega$-order is a pair $(X,\prec)$ where $X$ is a somewhere dense subset of $\R$ and $\prec$ is a total order on $X$ of order type $\omega$. Observe that a definable dense $\omega$-order $X$ is always $\dF$, because $X=\bigcup_{b\in X} \{ a\in X : a \prec b\}$, but it is not constructible, by density and codensity of $X$. This example is in fact a particularly pathological case of a $\dF$ non-constructible set, which yields combinatorial wildness beyond $\TP$ \cite{HW18}. 

\begin{remark}\label{rem:T-open-core}
Being constructible is maintained under elementary equivalence (see Remark~\ref{rem:extensions}). On the other hand, given a topological structure $\Mm$ with constructible open core, it is not clear whether any structure elementarily equivalent to $\Mm$ also has constructible open core. Since being $\NTP$ and definably $\sigma$-compact are also maintained under elementary equivalence, Theorem~\ref{thm:main-intro} provides a positive answer to this question for uniform structures with these properties. In Proposition~\ref{prop:cons-OC} we show that having constructible open core is also maintained under elementary equivalence for the class of strong uniform structures.
\end{remark}

In~\cite{Hie10} Hieronymi proves that, given a closed discrete set $D\subseteq \R$ and function $f:D^n\rightarrow \R$ whose image $f(D^n)$ is somewhere dense, the structure $(\R,+,\cdot, f)$ defines $\Z$. In particular, one such structure cannot satisfy Shelah-style combinatorial tameness conditions such as $\NTP$. This was partially generalized in~\cite[Theorem C]{HW18}, where it is shown that, if an expansion $\Mm$ of $(\R,<,+)$ defines an $\R$-valued function on a discrete set whose image is somewhere dense, then $\Mm$ defines a copy of the two-sorted structure $(\mathcal{P}(\N), \N, \in, +1)$. The next corollary strengthens the $\NTP$ versions of these results. 

\begin{corollary}\label{cor:disc-non-cons}
Let $\Mm$ be an \NTP{} definably $\sigma$-compact uniform structure. Let $D$ be a definable discrete set and $f:D\rightarrow M^n$ be a definable function. Then $f(D)$ is constructible. 
\end{corollary}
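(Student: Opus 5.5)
Since the structure is $\NTP$, Proposition~\ref{prop:BCT} makes it $\BP$. So by Lemma~\ref{lem:SHBP-Fsigma-cons} it suffices to show that $f(D)$ is $\dF$. The natural route is Lemma~\ref{lem:image-dF}: if $D$ is $\dF$ and $f$ is continuous on $D$, then $f(D)$ is $\dF$.

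Continuity is automatic. $D$ is discrete in the subspace topology, so every function on $D$ is continuous, and $f:D\rightarrow M^n$ is a definable continuous function.

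It remains to see that $D$ is $\dF$. I would show that $D$ is constructible and then apply Lemma~\ref{lem:cons-dF}. A discrete subset of a Hausdorff space is locally closed. For each $x\in D$ there is an open $U$ with $U\cap D=\{x\}$, so $x\notin cl(\partial D)$ and $D$ is open in $cl(D)$. Hence $D=cl(D)\cap O$ for some open $O$, which gives $\partial^{(2)}D=\partial(cl(D)\setminus D)=\emptyset$ because $cl(D)\setminus D$ is closed. By Fact~\ref{fct:constructible0}, $D$ is constructible. Alternatively, one can write $D$ directly as the $\dF$ union of the definable closed sets $D\setminus U[cl(D)\setminus D]$, $U\in\BU$, as in the proof of Lemma~\ref{lem:cons-dF}.

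Combining these steps, Lemma~\ref{lem:image-dF} shows that $f(D)$ is $\dF$, and Lemma~\ref{lem:SHBP-Fsigma-cons} shows that it is constructible. The only point needing care is that Lemma~\ref{lem:image-dF} requires $f$ to be continuous on all of its domain $X=D$ with the subspace topology. This holds trivially, so I expect no real obstacle; the substance is already contained in Proposition~\ref{prop:BCT} and the definable $\sigma$-compactness machinery.
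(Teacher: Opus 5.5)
Your proposal is correct and follows the paper's proof essentially step for step. $D$ is constructible, hence $\dF$ by Lemma~\ref{lem:cons-dF}; $f$ is trivially continuous, so Lemma~\ref{lem:image-dF} makes $f(D)$ $\dF$; and Proposition~\ref{prop:BCT} with Lemma~\ref{lem:SHBP-Fsigma-cons} yields constructibility. Your locally-closed argument $\partial^{(2)}D=\emptyset$ just spells out the step ``discrete implies constructible'' that the paper leaves implicit.
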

\begin{proof}
Since $D$ is discrete, it is constructible, and so by Lemma~\ref{lem:cons-dF} it is $\dF$. Since any function on a discrete set is continuous, by Lemma~\ref{lem:image-dF} the image $f(D)$ is $\dF$. By Proposition~\ref{prop:BCT} and Lemma~\ref{lem:SHBP-Fsigma-cons} we conclude that $f(D)$ is constructible. 
\end{proof}

We leave it to the reader to check, using the approach in the proof of Corollary~\ref{cor:disc-non-cons}, that an $\NTP$ definably $\sigma$-compact uniform structure cannot define a continuous surjection from a constructible to a non-constructible set. On the other hand, there can exist a definable bijection between a constructible and a non-constructible set. For example, in $(\R,<,+,\Q)$, consider the bijection $f:[0,1]\rightarrow ([0,1] \cap \Q) \cup ([1,2]\setminus \Q)$ given piecewise by $f(x)=x$ on $[0,1]\cap \Q$ and $f(x)=x+1$ on $[0,1]\setminus \Q$. 

%\begin{remark}
%Observe that many all of these results only require that the open core be NTP2.
%\end{remark}

The main open question of this paper is whether Theorems~\ref{thm:main-intro} and~\ref{thm:f-intro} can be established under combinatorial conditions weaker than $\NTP$. We formulate a precise question under what is arguably a maximally weak combinatorial tameness assumption, namely the non-definability of a copy of the monadic second-order theory of $\N$. 

\begin{question}\label{Q:BP}
Let $\Rr$ be an expansion of $(\R,<,+)$ that does not define an isomorphic copy of the two sorted structure $(\mathcal{P}(\N), \N, \in)$. Is $\Rr$ strongly hereditarily Baire? In particular, is $\Ro$ constructible?
\end{question}
%See Conjecture 2 in~\cite{wal22-preprint}.

\section{$\NIP$ uniform structures} \label{sec:nip}

In this section we focus on the subclass of $\NTP$ uniform structures \emph{without the independence property ($\NIP$)}. An example of one such structure that is not definably $\sigma$-compact would be $\Rr_\pi=(\Ralg, <, +,I)$, where $\Ralg$ denotes the real algebraic numbers and $I=\Ralg \cap (-\pi, \pi)$ (see Remark~\ref{rem:TP2-cons} (2)). Using the known fact that $\Rr_\pi$ is dp-minimal, one may nevertheless derive from~\cite{sim-wals19} that it is constructible.
%By (ref), this structure has (non-valuational) weakly o-minimal theory.
In this section we prove that a general class of structures, which includes $\Rr_\pi$, are constructible. By Proposition 2.1 in \cite{dolich-note}, the structure $\Rr_\pi$ contains projections of closed and bounded definable sets that are not closed, which complicates the implementation of the ideas used in the proof of Theorem~\ref{thm:main-intro} in this setting. Instead, we rely on $\NIP$ \emph{honest definitions}.

Given a structure $\Mm$, an \emph{externally definable} subset of $M^n$ is a set of the form $\varphi(M,b)$, where $\varphi(x,y)$ is a formula with $|x|=n$, and $b\in M^{|y|}$ is a tuple of parameters in some elementary extension $\Nn$ of $\Mm$. (For example, the set $I=\Ralg \cap (-\pi, \pi)$ is externally definable in $(\Ralg, <, +)$.)
The \emph{Shelah expansion of a structure $\Mm$}, denoted $\Msh$, is the expansion of $\Mm$ in the language where a predicate is added for every externally definable set.

\begin{fact}[\cite{simon:guide-NIP}, honest definitions] \label{honest-definitions}
Let $\Mm$ be an $\NIP$ structure, and let $X$ be an externally definable subset of $M^n$. There exists a formula $\varphi(x,y)$, with $|x|=n$, such that, for every finite set $F\subseteq X$, there exists some $b\in M^{|y|}$ satisfying that $F\subseteq \varphi(M,b) \subseteq X$. 
\end{fact}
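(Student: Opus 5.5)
We would reduce the statement to a finite combinatorial statement about the set system induced by the formula defining $X$, and then apply compactness. Write $X=\psi(M,c)$ with $c$ in some $\Nn\succeq\Mm$, and set $Y=M^n\setminus X$. Since $\Mm$ is $\NIP$, the partitioned formula $\psi(x;y)$ has finite VC-dimension $d$, and so does its dual. The target formula $\varphi$ will be a fixed Boolean combination (we first try a disjunction of conjunctions) of instances $\psi(x,y_{1}),\ldots,\psi(x,y_N)$. Here $N$ depends only on $\psi$ through $d$, and not on $F$.

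\textbf{Step 1: reformulate.} Fix a finite $F\subseteq X$. We must find parameters $b\in M$ with $F\subseteq \varphi(M,b)$ and $\varphi(M,b)\cap Y=\emptyset$. The tuple $c$ witnesses the partial type
\[
\Sigma_F(y)=\{\psi(a,y):a\in F\}\cup\{\neg\psi(a',y):a'\in Y\},
\]
but only in $\Nn$, whereas we need a realization inside $M$. Every finite part of $\Sigma_F$ is realized in $M$ by elementarity. The difficulty is that $Y$ is infinite, so finite satisfiability alone gives nothing uniform.

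\textbf{Step 2: the key combinatorial step.} We apply the Matou\v{s}ek $(p,q)$-theorem, which requires finite VC-dimension, or equivalently the finite-set uniform definability of types available for $\NIP$ formulas. The aim is to show the following: for each $a'\in Y$, there are boundedly many parameters $b_1,\ldots,b_N\in M$, each realizing $\Sigma_F$ restricted to $F\cup\{a'\}$, whose \emph{combination} $\varphi(x;b_1,\ldots,b_N)$ already excludes all of $Y$. Concretely, consider the family of sets
\[
S_{a'}=\{b\in M: \Mm\models\neg\psi(a',b)\wedge \textstyle\bigwedge_{a\in F}\psi(a,b)\},\qquad a'\in Y.
\]
The existence of $c$, together with elementarity, gives the $(p,q)$-property for this family: any finitely many of the $S_{a'}$ have a common point in $M$. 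The dual VC-dimension of $\psi$ is finite, so the $(p,q)$-theorem yields a transversal of size $N=N(d)$. This means there are $b_1,\ldots,b_N\in M$ such that every $a'\in Y$ lies outside $\psi(M,b_i)$ for some $i$, while each $b_i$ keeps all of $F$ inside $\psi(M,b_i)$.

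\textbf{Step 3: assemble $\varphi$.} We take $\varphi(x;y_1,\ldots,y_N)=\bigwedge_{i\le N}\psi(x,y_i)$. Step 2 then gives $F\subseteq\varphi(M,b)$, since each $b_i$ contains $F$. It also gives $\varphi(M,b)\cap Y=\emptyset$, since each $a'\in Y$ is excluded by some $b_i$. The bound $N$ is independent of $F$, so a single $\varphi$ works for every finite $F$. If the $(p,q)$ hypotheses fail as stated, a compactness argument in a saturated pair $(\Nn',M')$ should reduce to the same finite configuration.

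\textbf{Main obstacle.} We expect the main obstacle to be verifying the $(p,q)$-hypothesis uniformly, with $p,q$ depending only on $d$. The issue is that elementarity only provides finite intersections over $F\cup\{a'_1,\ldots,a'_p\}$ for each fixed finite choice, so one must check that the dual shatter bound is what controls the resulting transversal number.
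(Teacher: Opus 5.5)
Your Step 2 is where the argument breaks, and the obstacle is not the one you identify. Since $c$ realizes $\Sigma_F$ in $\Nn$, elementarity gives the family $\{S_{a'}: a'\in Y\}$ the finite intersection property in $M$. So every finite subfamily already has a transversal of size $1$, and the $(p,q)$-theorem adds nothing. What you need is a bounded transversal, with points in $M$, for the whole infinite family indexed by the non-definable set $Y$. The $(p,q)$-theorem says nothing about that, and compactness would only place such points in an elementary extension.

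In fact the conclusion is false. Take $\Mm=(\Q,<)$, let $c$ satisfy $0<c<q$ for all $q\in\Q^{>0}$, and let $\psi(x;y)$ be $x<y$. Then $X=\Q^{\le 0}$ and $Y=\Q^{>0}$; take $F=\{0\}$. Here $S_{a'}=(0,a']\cap\Q$, and no finite subset of $\Q$ meets all of these sets. Worse, no Boolean combination of instances $x<b$ with $b\in\Q$ both contains $0$ and is contained in $\Q^{\le 0}$: any such set containing $0$ contains $[0,\varepsilon)\cap\Q$ for some $\varepsilon>0$. An honest definition here is $x\le y$, which is not of that form. So taking $\varphi$ to be a Boolean combination of instances of $\psi$ with parameters in $M$ cannot work in general.

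The fallback to a saturated pair does not help either. For fixed $N$ and $F$, the conclusion of Step 2 is a first-order statement about $(\Nn,M,c)$ in the language with a predicate for $M$. Since it fails in $(\Nn,M,c)$, it fails in every elementary extension of the pair.

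The paper gives no proof of this statement; it quotes it from \cite{simon:guide-NIP}. There one passes to a sufficiently saturated elementary extension $(\Nn',M')$ of the pair $(\Nn,M)$, in the language with a predicate $P$ for $M$ and a constant for $c$. One then uses $\NIP$ to obtain a single formula $\theta(x;d)$, with $d\in M'$, such that $\theta(M;d)=X$ and $\theta(M';d)\subseteq\psi(M';c)$. For each finite $F\subseteq X$, the sentence $\exists z\in P\,\bigl[\bigwedge_{a\in F}\theta(a;z)\wedge\forall x\in P\,(\theta(x;z)\rightarrow\psi(x;c))\bigr]$ holds in $(\Nn',M')$, hence in $(\Nn,M)$, and so $\varphi=\theta$ works.

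The formula $\theta$ is produced by compactness in the pair and is in general not built from instances $\psi(x;b)$. In the example above, $x\le y$ is equivalent to $\forall w\,(y<w\rightarrow x<w)$, which quantifies over the parameter variable. The $(p,q)$-theorem does enter known combinatorial arguments in this area, for instance uniform definability of types over finite sets. It does not enter as a transversal inside $M$ for the family indexed by $Y$.
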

%Honest definitions = Remark 3.14 in Simon's book. 

Recall that Lemma~\ref{lem:SHBP-Fsigma-cons} and Proposition~\ref{prop:BCT} yield that every $\dF$ set in an $\NTP$ uniform structures is constructible. We now need a slight modification of this result, with an analogous proof.

\begin{lemma}\label{lem:NIP-Fsigma}
Let $\Mm$ be an $\NTP$ uniform structure and $X\subseteq M^n$ be a definable set. Suppose that there exists a definable family $\{ X_b : b\in B\}$ of subsets of $M^n$ satisfying that, for every finite set $F\subseteq X$, there is some $b\in B$ such that $X_b$ is closed and moreover $F\subseteq X_b \subseteq X$. Then $X$ is constructible. 
\end{lemma}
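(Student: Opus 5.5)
We argue by contradiction, imitating the proof of Lemma~\ref{lem:SHBP-Fsigma-cons} combined with Lemma~\ref{lem:noise-NTP2}. Both the hypothesis and the conclusion transfer to elementary extensions: the hypothesis is expressible by first-order sentences (one for each size of $F$), and constructibility transfers by Remark~\ref{rem:extensions}. So we may assume $\Mm$ is $\omega_1$-saturated. Suppose $X$ is not constructible and let $Z=\bigcap_{n<\omega} cl(\partial^{(n)}X)$. By Lemma~\ref{lem:cons-noise1}, $Z$ is nonempty and $X$ is dense and codense in $Z$.

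The plan is to apply Lemma~\ref{lem:noise-NTP2} to $Z$ and the definable family $\Cc=\{X_b : b\in B,\ X_b \text{ closed},\ X_b\subseteq X\}$. This family is definable: closedness of $X_b$ is first-order, since the topology has a definable basis, and $X_b\subseteq X$ is first-order since $X$ is definable. It consists of closed sets. To verify the hypotheses of Lemma~\ref{lem:noise-NTP2}, fix nonempty relatively open $W_1,\ldots,W_k\subseteq Z$.
\begin{enumerate}[(1)]
\item Since $X$ is dense in $Z$, each $W_i$ contains a point $x_i\in X\cap Z$. Let $F=\{x_1,\ldots,x_k\}\subseteq X$. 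The hypothesis of the present lemma gives $b$ with $X_b$ closed and $F\subseteq X_b\subseteq X$. Then $X_b\in\Cc$ and $X_b\cap W_i\neq\emptyset$ for every $i$.
\item Since $X_b\subseteq X$ and $X$ is codense in $Z$, the set $Z\setminus X_b\supseteq Z\setminus X$ is dense in $Z$, so $X_b$ is codense in $Z$.
\end{enumerate}
Lemma~\ref{lem:noise-NTP2} then yields that $Z$ has $\TP$, so $\Mm$ has $\TP$, contradicting $\NTP$.

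The one point needing care is the application of Lemma~\ref{lem:noise-NTP2}. It requires only that $\Cc$ is a definable family of closed sets, with no upward directedness, and it allows $Z$ to be an arbitrary set, here a type-definable one. Both conditions are met. In the proof of Lemma~\ref{lem:noise-NTP2}, codensity is used only in the form ``$C$ has empty interior in $Z$'', and this is exactly what (2) supplies. I therefore expect no real obstacle beyond checking that the transfer to an $\omega_1$-saturated extension preserves the hypothesis family. This is routine, because for each $k$ the statement ``for all $x_1,\ldots,x_k\in X$ there is $b$ with $X_b$ closed and $\{x_1,\ldots,x_k\}\subseteq X_b\subseteq X$'' is a single sentence.
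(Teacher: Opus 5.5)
Your proposal is correct and follows the paper's proof: pass to an $\omega_1$-saturated model, take $Z=\bigcap_{n<\omega} cl(\partial^{(n)}X)$, get density and codensity of $X$ in $Z$ from Lemma~\ref{lem:cons-noise1}, and apply Lemma~\ref{lem:noise-NTP2}. You also restrict the family to the definably selected $X_b$ that are closed and contained in $X$, which the paper leaves implicit and which is exactly what Lemma~\ref{lem:noise-NTP2} needs (closed members, each codense in $Z$).
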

\begin{proof}
Towards a contradiction, suppose that $X$ is not constructible. Without loss of generality, we may assume that $\Mm$ is $\omega_1$-saturated. Let $Z=\bigcap_{n<\omega} cl(\partial^{(n)} X)$. By Proposition~\ref{lem:cons-noise1}, the set $X$ is dense and codense in $Z$. But then, applying Lemma~\ref{lem:noise-NTP2} to $Z$ and the family $\Cc=\{ X_b : b\in B\}$, we derive that $\Mm$ has $\TP$. 
\end{proof}

We now observe that, in constructible uniform structures, the property of being $\dF$ is witnessed uniformly in families. Note that the same argument holds in structures where every definable set is $\dF$.
%Recall that, since a fiber of a constructible set is constructible, to have that a topological structure is constructible it suffices that $0$-definable sets are. In particular, being constructible is an elementary property. In the following remark we observe that, in constructible structures, the property of being $\dF$ is witnessed uniformly in families. We note this through a direct argument rather than by using model-theoretic compactness. We leave it to the reader to check that the same holds in structures where every definable set is $\dF$.

\begin{remark}\label{rem:dF-uniform}
Let $\Mm$ be a constructible uniform structure and let $\{ X_b : b\in B \subseteq M^n\}$ be a definable family of subsets of $M^m$. Let $X\subseteq M^{m+n}$ be the definable set such that, for each $b\in B$, the set $X_b$ is the corresponding fiber of $X$. Since $X$ is constructible, it is $\dF$ (Lemma~\ref{lem:cons-dF}), witnessed by some definable family $\{ C_d : d\in D\subseteq M^k\}$. By letting $C_d=\emptyset$ for all $d\notin D$ if necessary, we may assume that $D=M^{k}$. For each $d\in M^k$ and $b\in B$, let $C_{d,b}=(C_d)_b=\{ a\in M^n : (a,b)\in C_d\}$ denote the corresponding fiber of $C_d$. It is easy to see that, for every $b\in B$, the definable family $\{ C_{d,b} : d\in M^k\}$ witnesses that $X_b$ is $\dF$. 
\end{remark}

We may now prove Theorem~\ref{thm:nip-intro}.  

\begin{proof}[Proof of Theorem~\ref{thm:nip-intro}]
Let $X\subseteq M^n$ be a set definable in $\Msh$. By quantifier elimination of the Shelah expansion (see e.g. Proposition 3.24 in~\cite{simon:guide-NIP}) the set $X$ is externally definable. Let $\varphi(x,y)$ be a formula as given by Fact~\ref{honest-definitions}. Since $\Mm$ is constructible, by Remark~\ref{rem:dF-uniform} there exists another formula $\psi(x,y,z)$ such that, for every $b\in M^{|y|}$, the family $\{\psi(M,b,d) : d\in M^{|z|}\}$ witnesses that $\varphi(M,b)$ is $\dF$. By definition of $\varphi(x,y)$ it follows that, for every finite set $F\subseteq X$, there are parameters $(b,d)\in M^{|y|+|z|}$ such that the set $\psi(M,b,d)$ is closed and $F\subseteq \psi(M,b,d)\subseteq X$. Finally, since $\Msh$ is $\NIP$ (see Corollary 3.24 in \cite{simon:guide-NIP}) ---and so in particular $\NTP$---, we derive from Lemma~\ref{lem:NIP-Fsigma} that $X$ is constructible. 
\end{proof}

%The above theorem applies, for example, to weakly o-minimal structures generated by expanding o-minimal structures by externally definable sets, such as the example $\Rr_\pi$ described at the start of the section.  
Applying Theorem~\ref{thm:main-intro}, Theorem~\ref{thm:nip-intro} yields that the Shelah expansion of the open core of any $\NIP$ definably $\sigma$-compact uniform structure is constructible. We end the section by listing specific classes of $\NIP$ constructible structures. The constructibility of their Shelah expansion is then derived either from Theorem~\ref{thm:nip-intro} or, in some cases, directly from the work in~\cite{sim-wals19}. Case (6) is improved in Proposition~\ref{prop:shelah-d-min}.

\begin{corollary}\label{cor:shelah-exp}
Let $\Mm$ be any of the following structures. 
\begin{enumerate}
    \item An o-minimal expansion of an ordered group.  
    \item A P-minimal expansion of a p-adically closed field. 
    \item A C-minimal expansion of a C-group. 
    \item A dp-minimal expansion of a divisible ordered abelian group.
    \item A non strongly minimal dp-minimal expansion of a field with the Johnson topology \cite{johnson-topology}.
    \item An $\NIP$ d-minimal expansion of $(\R,<,+)$.
\end{enumerate}
Every expansion of $\Mm$ by externally definable sets is constructible. 
\end{corollary}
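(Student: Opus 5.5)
The plan is to reduce each case to Theorem~\ref{thm:nip-intro}. Let $\Nn$ be an expansion of $\Mm$ by externally definable sets. Every set definable in $\Nn$ is externally definable in $\Mm$, by quantifier elimination of the Shelah expansion (Proposition 3.24 in~\cite{simon:guide-NIP}). Hence $\Nn$ is a reduct of $\Msh$, and it suffices to show that $\Msh$ is constructible. By Theorem~\ref{thm:nip-intro}, this in turn reduces to checking, in each of the six cases, two things:
\begin{enumerate}[(i)]
\item $\Mm$ is an $\NIP$ uniform structure, in the sense that the relevant topology is induced by a definable uniformity;
\item $\Mm$ is constructible.
\end{enumerate}

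Condition (i) should be routine in each case.
\begin{itemize}
\item Cases (1), (4) and (6) are expansions of ordered groups, so the uniformity is the one given in the first example of Section~\ref{sec:prelim}.
\item Case (2) is a valued field.
\item Case (3) is a C-group, where the balls of the C-relation give a definable basis of entourages via translates.
\item Case (5) is a topological field with a definable basis of neighbourhoods of $0$ (the Johnson topology), so translates give the uniformity.
\end{itemize}
$\NIP$ is classical in cases (1)--(3). Dp-minimal theories are $\NIP$ by definition, which covers (4) and (5), and (6) is $\NIP$ by hypothesis. The standing assumption that the open core is again a uniform structure is automatic here, since the uniformities are given by orders, valuations, C-relations or definable group topologies with open basic sets.

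Condition (ii) is the substantive part, and I would cite the literature case by case rather than reprove it.
\begin{itemize}
\item O-minimal structures have cell decomposition, so every definable set is a finite union of cells, which are locally closed.
\item P-minimal structures admit a cell decomposition (Mourgues), or one can use the dimension theory of Haskell--Macpherson; either way definable sets are finite unions of locally closed sets.
\item For C-minimal and dp-minimal uniform structures I would invoke~\cite{sim-wals19}. There Simon and Walsberg show, for dp-minimal uniform structures with no isolated points satisfying mild conditions, that definable sets have small frontier (naive dimension drops under $\partial$). Iterating then gives $\partial^{(n)}X=\emptyset$ for large $n$, and Fact~\ref{fct:constructible0} yields constructibility. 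C-minimal structures are dp-minimal, so they fall under the same result. The hypotheses of~\cite{sim-wals19} (for instance, in case (5), that the structure is not strongly minimal so that the topology is a non-discrete field topology) are exactly the extra assumptions listed in (4) and (5).
\item For d-minimal expansions of $(\R,<,+)$, constructibility is known from the work of Miller (\cite{miller05-tame}; cf.\ also Fornasiero's work on d-minimal structures): definable sets are finite unions of $C^0$-cells in a d-minimal cell decomposition, hence locally closed pieces.
\end{itemize}

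Once (i) and (ii) hold, Theorem~\ref{thm:nip-intro} gives that $\Msh$ is constructible, and so is its reduct $\Nn$, since definable sets of $\Nn$ are definable in $\Msh$ with the same topology. I expect the main obstacle to be matching the precise hypotheses of the cited constructibility results, especially in cases (3) and (5). There one must check that the setting of~\cite{sim-wals19} (a definable uniformity, no isolated points, dp-minimality) indeed applies, and that its frontier-dimension inequality holds in all ambient powers $M^n$, not just in $M$.
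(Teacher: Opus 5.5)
Your proposal is correct and follows the paper's proof: establish from the literature that $\Mm$ is constructible, then apply Theorem~\ref{thm:nip-intro}, noting that any expansion by externally definable sets is a reduct of $\Msh$. The paper's citations are more uniform and somewhat safer than yours: it handles (1)--(5) all through the small-boundary dimension of~\cite{sim-wals19}, and it treats (2) as a special case of (5), which avoids Mourgues' cell decomposition (available only with definable Skolem functions); for (6) it uses~\cite[Theorem 3.2]{miller05-tame}, after observing that a definable set all of whose coordinate projections have empty interior lies in a finite union of discrete sets, rather than a $C^0$-cell decomposition.
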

\begin{proof}
$(1)$ is a special case of $(4)$ and $(2)$ a special case of $(5)$. All of $(1)-(5)$ fall under the framework investigated in \cite{sim-wals19}, where it is shown that every such structure $\Mm$ admits a dimension $\dim X$ on definable sets $X$ which, assuming $X\neq \emptyset$, satisfies:  
\begin{enumerate}[(i)]
    \item $\dim X \geq 0$ and, moreover, $\dim X=0$ iff $X$ is finite,
    \item $\dim \partial X < \dim X$. 
    %Proposition 4.3. in \cite{sim-wals19}.
\end{enumerate}
Hence, since the topologies are Hausdorff, there exists some $n$ such that $\partn{n}X = \emptyset$, and so $\Mm$ is constructible (Fact~\ref{fct:constructible0}). The constructibility of every expansion of $\Mm$ by externally definable sets follows from Theorem~\ref{thm:nip-intro} or, in case $(1)$, $(2)$, $(4)$ and $(5)$, simply from the fact that the Shelah expansion remains dp-minimal. 

Let $\Rr$ be a d-minimal (see Section~\ref{sec:d-minimal}) expansion of $(\R,<,+)$ and $\Mm\equiv \Rr$. It is easy to see that, for every set $X$ definable in $\Mm$, if every coordinate projection of $X$ has empty interior, then $X$ is contained in a finite union of discrete sets, and so $X$ is constructible. It then follows from \cite[Theorem 3.2]{miller05-tame} that $\Rr$ is constructible. By Theorem~\ref{thm:nip-intro}, its Shelah expansion is also constructible. 
\end{proof}
%To see that C-minimal expansions of C-groups are constructible: I used VC-minimal Swiss cheese decomp.to see constructibility of unary definable sets and then apply arguments I know involving dp-minimality to get the axiom (inf) of Walsberg-Simon. 

\section{Definably complete structures}\label{sec:LO-structures}

In this section we restrict our analysis of uniform structures to the setting of expansions of linearly ordered groups. A linearly ordered structure $(R,<,\ldots)$ is \emph{definably complete} if every definable subset of $R$ has a supremum in $R \cup \{-\infty, +\infty\}$. This definition clearly generalizes the notion of being an expansion of $(\R,<)$. We first show that definable completeness implies definable $\sigma$-compactness, completing the proof of Theorem~\ref{thm:main-intro}. We then recast Theorems~\ref{thm:main-intro} and~\ref{thm:f-intro}, applied to expansions of $(\R,<,+)$, into a descriptive set theoretic dichotomy (Theorem~\ref{thm:Baire-class-one}). Finally, we prove some facts about constructible expansions of definably complete groups, including Theorem~\ref{thm:cons-tame-top}.

Since notions such as constructibility and local o-minimality are trivial in discrete ordered groups, we often focus on densely ordered groups, where we will implicitly rely on the following fact. 

%The next fact is used in proving definable choice in Proposition~\ref{prop:choice} (see $r_1/2$). Because it is trivial, the discrete case is only implicit in the proof of  Lemma~\ref{lem:generic-local-omin} and in Proposition~\ref{prop:local-o-min}. In section~\ref{sec:d-minimal} we completely ignore the discrete case and define d-minimality for dense linear orders. 
\begin{fact}[\cite{MillerIVP}, Proposition 2.2] \label{fct:DC-doag}
Let $\Rr$ be a definably complete expansion of a densely ordered group. Then $\Rr$ is divisible and abelian. 
\end{fact}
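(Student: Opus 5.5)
The statement is Fact~\ref{fct:DC-doag}: a definably complete expansion $\Rr$ of a densely ordered group is divisible and abelian. I would prove abelianity first and then divisibility. Both parts rest on one observation. Given a definable map $f$ that is continuous, I would take the supremum of a definable set of points where $f$ lies below a target value, and then use density to show that $f$ actually attains that value. In other words, I would first establish a definable intermediate value property for continuous definable functions $f:[a,b]\to R$. The proof is the standard one: for $f(a)<c<f(b)$, let $s=\sup\{x\in[a,b]: f(x)<c\}$. Continuity of $f$ at $s$ rules out both $f(s)<c$ and $f(s)>c$. Here continuity of translations, and of any definable map built from them, comes from $<$ and $+$ alone.

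\emph{Divisibility.} Fix $n\geq 2$ and $a>0$, and put $f(x)=nx=x+\cdots+x$. This map is definable and strictly increasing, and I would check that it is continuous. At $0$ this needs that for every $\epsilon>0$ there is $\delta>0$ with $n\delta<\epsilon$. That is where density enters. Given $\epsilon$, pick $0<\delta_1<\epsilon$ and set $\delta_2=\min(\delta_1,\epsilon-\delta_1)$, so that $2\delta_2\leq\epsilon$; iterating handles powers of $2$, which suffices. In a non-abelian group I would write these as $\delta_1+(-\delta_1+\epsilon)$ and be careful with sides. Continuity elsewhere follows from continuity at $0$ only once commutativity is available, so I would first treat elements in the centre, or simply do abelianity first. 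Since $f(0)=0<a<f(a)$ (as $a>0$ gives $na\geq a$, with equality impossible), the intermediate value property yields $x$ with $nx=a$.

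\emph{Abelianity.} This is the main obstacle. The approach I would try is to show that the group has no nontrivial proper definable convex subgroups. Then I would use the classical fact that an archimedean ordered group is abelian (Hölder). Full archimedeanity is not definable, though, so I would instead argue as follows. Fix $g$ and consider the definable centralizer $C(g)$. Also consider the set $S=\{x>0: \text{for all } y \text{ with } |y|\le x,\ gy=yg\}$. Using the supremum of $S$ (definable completeness) and density, I would derive a contradiction unless $S$ is unbounded. Concretely, let $s=\sup S$. If $s$ were finite, a small element $t>0$ with $t<s$ and $s-t$ small lets me write elements slightly above $s$ as products of elements below $s$. For instance $y=(y t^{-1})t$ with both factors of absolute value below $s$; these commute with $g$, hence so does $y$. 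This contradicts maximality. The step that could fail is showing $S$ is nonempty, i.e.\ that small enough elements commute with $g$. For this I would use conjugation $x\mapsto gxg^{-1}$, which is a definable order automorphism. Combined with density and the supremum argument, it shows that the set of $x$ with $gxg^{-1}>x$ has no least-upper-bound behaviour compatible with an order automorphism fixing $0$. I expect this to need the most care. If it resists, I would fall back on citing Miller's argument \cite{MillerIVP} directly.
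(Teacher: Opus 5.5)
Your divisibility argument is fine once commutativity is known. Your worry about continuity of $x\mapsto nx$ away from $0$ is also unnecessary: translations and conjugations are order automorphisms, so any ordered group is a topological group in its order topology.

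The genuine gap is in abelianity. Your supremum argument correctly shows that $S$ is either empty or unbounded. But the step you leave open, $S\neq\emptyset$, is essentially the whole statement. The route you sketch for it cannot succeed as stated. You invoke only that $f(x)=-g+x+g$ is a definable order-preserving automorphism fixing $0$, together with density and definable completeness. Yet $x\mapsto x+x$ on $(\R,<,+)$ has all of these properties and fixes no point other than $0$, so no neighbourhood of $0$ is fixed pointwise. The missing idea is that $f$ is \emph{inner}: it fixes $g\neq 0$ itself. As it stands, the proposal falls back on citing Miller, which is also all the paper does. The paper gives no proof of this Fact.

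One way to close the gap is as follows.
\begin{enumerate}
\item Fix $g>0$ and let $F=\{x : f(x)=x\}$. This is the centralizer of $g$, a definable subgroup containing $g$.
\item Since $f$ is an order-preserving bijection, it preserves suprema and infima of definable sets.
\item If $F$ were bounded above, then $c=\sup F$ would satisfy $f(c)=c$. Then $c+g\in F$ and $c+g>c$, a contradiction. So $F$ is cofinal in $R$, and coinitial since $F=-F$.
\item Now suppose $f(x_0)\neq x_0$. Put $a=\sup\{x\in F: x<x_0\}$ and $b=\inf\{x\in F: x>x_0\}$. Both lie in $R$ and are fixed by $f$, with $a<x_0<b$ and $(a,b)\cap F=\emptyset$.
\item Your own supremum argument, using definable completeness, density and monotonicity of $f$, shows the following. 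Whichever of $f(x)>x$ or $f(x)<x$ holds on $(a,b)$ cannot switch, or there would be a fixed point in $(a,b)$.
\item But $\sigma(x)=a-x+b$ maps $(a,b)$ into itself, and $f(\sigma(x))=a-f(x)+b$ because $f$ is a group automorphism fixing $a$ and $b$. So $f(x)>x$ forces $f(\sigma(x))<\sigma(x)$, and vice versa, a contradiction.
\end{enumerate}
Hence $F=R$, so every $g$ is central and $R$ is abelian.
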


Through the next lemma and proposition we show that, in definably complete structures, definable sets are definably compact iff they are closed and bounded, and so in particular definably complete structures are definably $\sigma$-compact. We use an approach analogous to Johnson's~\cite{johnson14}. It is worth noting that one could adapt the proof of Theorem~\ref{thm:main-BP} directly to the definably complete setting by using the analogue of Fact~\ref{fct:comp-basic} (3) for closed and bounded sets proved by Miller~\cite{MillerIVP}. Nevertheless, we consider that developing the basic theory of definable compactness in definably complete structures is of independent interest. Lemma~\ref{lem:intervals-comp} completes the proof of Theorem~\ref{thm:main-intro}.
%For the following, we can develop the theory of definable compactness or instead use Fact 2.2. in~\cite{DMS10}.

\begin{lemma}\label{lem:intervals-comp}
Let $\Rr$ be an expansion of a linear order. If $\Rr$ is definably complete, then any closed and bounded interval $I\subseteq R$ is definably compact. In particular, $\Rr$ is definably $\sigma$-compact.
\end{lemma}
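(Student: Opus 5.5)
Fix a closed bounded interval $I=[a,b]$ and a definable downward directed family $\{C_t : t\in T\}$ of nonempty closed subsets of $I$. The goal is a point in $\bigcap_t C_t$, and the natural candidate is a supremum of minima. First, each $C_t$ has a minimum: it is a nonempty definable subset of $I$, hence bounded, so definable completeness gives an infimum $m_t\in[a,b]$. Since $C_t$ is closed in $I$ and $I$ is closed, $C_t$ is closed in $R$. If $m_t\notin C_t$, some open interval around $m_t$ misses $C_t$, and then $m_t$ would not be the infimum. (In a discrete order this is immediate.) So $m_t=\min C_t$.

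Next, put $c=\sup\{m_t : t\in T\}$. This set is definable, uniformly in $t$, and bounded by $b$, so $c$ exists and $c\in[a,b]$. I claim $c\in C_s$ for every $s\in T$. Fix $s$ and suppose $c\notin C_s$. Since $C_s$ is closed, there is an open interval $J\ni c$ with $J\cap C_s=\emptyset$. By the definition of supremum, there is $t$ with $m_t\in J$ and $m_t\le c$; if $c$ is isolated from the left, take $m_t=c$, which is in the set. By downward directedness, pick $u$ with $C_u\subseteq C_s\cap C_t$. Then $m_u\ge m_t$, because $C_u\subseteq C_t$, and $m_u\le c$ by the choice of $c$. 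Hence $m_u\in[m_t,c]\subseteq J$. But $m_u\in C_u\subseteq C_s$, contradicting $J\cap C_s=\emptyset$. So $c\in\bigcap_t C_t$, and $I$ is definably compact.

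The delicate step is the case analysis around $c$. We need $J$ to contain $[m_t,c]$, which is fine because $J$ is convex. We must also handle the possibility that $c$ has no elements of $\{m_t\}$ approaching it strictly from below. In that case $c$ is itself a maximum of the set, equal to some $m_t$, and the same argument goes through. No group structure is needed for this step.

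For definable $\sigma$-compactness we need an expansion of an ordered group, or at least a definable family of intervals. Take the family $\{[-r,r] : r>0\}$, or $\{[x,y] : x\le y\}$ for a bare linear order. This family is definable and upward directed, since $[x,y]\cup[x',y']\subseteq[\min(x,x'),\max(y,y')]$. It covers $R$, and each member is definably compact by the first part. If $R$ has endpoints the same family still works.
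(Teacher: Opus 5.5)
Your proof is correct and is essentially the paper's argument in order-dual form. The paper takes $s=\inf_{b}\sup C_b$, using closedness so that $\sup C_b=\max C_b$. Assuming $s\notin C_{b_0}$, it uses the first point $s_0>s$ of $C_{b_0}$ and downward directedness to produce some $C_{b_2}\subseteq C_{b_0}$ with $\max C_{b_2}<s$. This mirrors your supremum-of-minima argument with the open interval $J$, and the $\sigma$-compactness part is likewise the same, via the definable upward directed family of closed bounded intervals.
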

\begin{proof}
Let $\Cc=\{C_b : b\in B\}$ be a downward directed definable family of closed subsets of $I$. By definable completeness, note that 
$
s=\inf_{b\in B} \sup C_b
$
exists. We observe that $s\in \cap \Cc$. Towards a contradiction, let $b_0\in B$ be such that $s\notin C_{b_0}$. By definition of $s$ and definable completeness there exists $s_0=\inf (C_{b_0} \cap (s,\infty))$ with $s_0>s$. By definition of $s$ there exists some $b_1\in B$ such that $\sup C_{b_1}=s_1 < s_0$. By downward directedness let $b_2\in B$ be such that $C_{b_2}\subseteq C_{b_0} \cap C_{b_1}$. Then $\sup C_{b_2} = \max C_{b_2} < s$, which contradicts the definition of $s$. 

Since the family of closed and bounded intervals in $R$ is (uniformly) definable, we get that $\Rr$ is definably $\sigma$-compact.  
\end{proof}

Given a linearly ordered structure $\Rr$, a \emph{box} $D\subseteq R^n$ is a product of $n$ intervals in $R$. We call the box $D$ open, closed or bounded whenever all the intervals have the corresponding property. 

\begin{proposition}[Definable Heine-Borel]\label{prop:Heine-Borel}
Let $\Rr$ be a definably complete structure. A definable set $X$ is definably compact if and only if it is closed and bounded.
\end{proposition}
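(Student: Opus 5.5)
The plan is to prove the two directions separately, using Lemma~\ref{lem:intervals-comp} together with Fact~\ref{fct:comp-basic}. Throughout, the topology on $R^n$ is the product of the order topology, so closed boxes are products of closed intervals.

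For the backward direction, let $X\subseteq R^n$ be closed and bounded. Choose a closed bounded box $D=I_1\times\cdots\times I_n$ with $X\subseteq D$. By Lemma~\ref{lem:intervals-comp}, each $I_i$ is definably compact, so by Fact~\ref{fct:comp-basic}~(4) the box $D$ is definably compact. Since $X$ is closed in $R^n$, it is closed in $D$, and Fact~\ref{fct:comp-basic}~(1) gives that $X$ is definably compact.

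For the forward direction, let $X$ be definably compact. Closedness is Fact~\ref{fct:comp-basic}~(2). For boundedness, it suffices by Fact~\ref{fct:comp-basic}~(3) to treat $n=1$: each coordinate projection $\pi_i$ is continuous, so $\pi_i(X)\subseteq R$ is definably compact, and $X$ is bounded as soon as every $\pi_i(X)$ is. So let $Y\subseteq R$ be definably compact, hence closed, and suppose towards a contradiction that $Y$ is unbounded above. Consider the definable family $\{Y\cap[b,\infty) : b\in Y\}$. Each member is nonempty, because it contains $b$. Each member is closed in $Y$. The family is downward directed, since the sets are nested. Its intersection is empty because $Y$ has no maximum. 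This contradicts definable compactness, and the case of $Y$ unbounded below is symmetric.

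If $R$ has endpoints, every set is bounded and there is nothing to prove. The only delicate point is that ``bounded'' should mean contained in a bounded box, and I expect no real obstacle beyond this. The main work, the compactness of closed bounded intervals, is already done in Lemma~\ref{lem:intervals-comp}.
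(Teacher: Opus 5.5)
Your proof is correct and follows the paper's structure. The backward direction is identical to the paper's.

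For boundedness, the paper argues directly in $R^n$. It applies definable compactness to the downward directed family of complements $X\setminus D_b$, where the $D_b$ are the open boxes in $R^n$; this family has empty intersection, so some $X\setminus D_b$ must be empty. You instead first reduce to $n=1$ using coordinate projections and Fact~\ref{fct:comp-basic}~(3). You then use the nested tails $Y\cap[b,\infty)$, $b\in Y$, which are automatically nonempty. This is a slightly longer but equally valid variant of the same idea.
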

\begin{proof}
Suppose that $X\subseteq R^n$ is definably compact. By Fact~\ref{fct:comp-basic} (2) it is closed. Let $\{ D_b :b\in B\}$ be the upward directed definable family of open boxes in $R^n$. Then $\{ X\setminus D_b : b\in B\}$ is a downward directed definable family of closed subsets of $X$ with empty intersection. By definable compactness, there exists $b\in B$ such that $X\setminus D_b = \emptyset$, meaning that $X\subseteq D_b$, and so $X$ is bounded. 

Now suppose that $X$ is closed and bounded. Let $D$ be a closed and bounded box such that $X\subseteq D$. By Lemma~\ref{lem:intervals-comp} and Fact~\ref{fct:comp-basic} (4), $D$ is definably compact. By Fact~\ref{fct:comp-basic} (1), the set $X$ is definably compact. 
\end{proof}

%Is being definably complete in fact equivalent to have a being definably $\sigma$-compact and locally definably compact, equivalent to having a definably compact exhaustion? Not sure, but it does seem to be equivalent to having that closed intervals be definably compact. Maybe explore constructible expansions of the p-adics where the balls are definably compact, enough to prove definable choice. Also, for NTP structure being definably sigma-compact means having a definably compact exhaustion (in particular being locally compact). 

A function $f:X\subseteq \R^n\rightarrow \R^m$ is called \emph{first level Borel} if the inverse image of every $F_\sigma$ set by $f$ is $F_\sigma$. These functions belong to the better known collection of Baire class one functions, which are studied in the model-theoretic context in~\cite{comp-set}. Jayne and Rogers~\cite{Jayne-Rogers} showed that, if $X$ is analytic, then $f$ is first level Borel iff it admits a countable partition into continuous functions with locally closed domains. One may show that every function definable in a constructible structure over $\R$ is first level Borel. Theorem~\ref{thm:f-BP} strengthens the partition result of Jayne and Rogers in the case of $\BP$ expansions of $(\R,<,+)$. We now present a version of Theorems~\ref{thm:main-intro} and~\ref{thm:f-intro} that takes the form of a dichotomy given by the definability of a certain ``pathological" first level Borel function. 

\begin{theorem}\label{thm:Baire-class-one}
Let $\Rr$ be an expansion of the real field $(\R,+,\cdot)$, then exactly one holds. 
\begin{enumerate}
    \item The open core $\Ro$ is constructible and every function definable in $\Ro$ is both generically and (finitely) piecewise continuous. 
    \item The open core $\Ro$ defines a first level Borel function that is not (finitely) piecewise continuous.  
\end{enumerate}
\end{theorem}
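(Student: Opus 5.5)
The plan has three parts. First I show that (1) and (2) are mutually exclusive. Then I show that whenever (1) fails, $\Ro$ defines a pathological first level Borel function; the approach is to extract, via Remark~\ref{rem:simple-BP}, a $D_\Sigma$ witness and encode it into a semicontinuous definable function. Exclusivity is immediate: if (1) holds, every function definable in $\Ro$ is finitely piecewise continuous, so (2) cannot hold.

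Assume (1) fails. Note that $\Ro$ is itself a (definably complete, hence definably $\sigma$-compact) uniform structure, and its open core is itself. By Theorems~\ref{thm:main-BP} and~\ref{thm:f-BP}, sharpened by Remark~\ref{rem:simple-BP} in its $D_\Sigma$ form, the failure of (1) gives a failure of the weak condition of that remark. So, in an $\omega_1$-saturated extension, there are a $D_\Sigma$ set $X$ definable in $\Ro$ with witnessing family $\{C_{r,s}\}$ and a countable type $Z\subseteq X$ over $\R$ such that no closed bounded set $C\subseteq X$ has $C\cap Z$ with nonempty interior in $Z$. By the argument of Lemma~\ref{lem:SHBP-Fsigma-cons} and Theorem~\ref{thm:f-BP}, I expect this to yield a definable $D_\Sigma$ set $X$ (over $\R$) that is \emph{not} constructible. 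This is the case where a projection of a constructible set fails to be constructible. If instead the failure is of piecewise continuity, take a graph-projection set from the proof of Theorem~\ref{thm:f-BP}.

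Using the field structure, reparametrize the family so that it is a single increasing family $\{C_t : t>0\}$ of closed bounded sets with $X=\bigcup_t C_t$; for instance take $C_t=C_{t,1/t}$. Define $f:\R^n\to\R$ by $f(x)=1/(1+\inf\{t>0: x\in C_t\})$ for $x\in X$ and $f(x)=0$ otherwise. This is definable in $\Ro$. The superlevel sets $\{f\geq a\}$ are, up to the boundary behaviour of the infimum, intersections of the closed sets $C_t$, so $f$ is upper semicontinuous. Then preimages of open sets are $F_\sigma$. For closed sets, I would check directly, using that $\{C_t\}$ is indexed by reals and has a countable cofinal subfamily, that $\{f\le b\}$ is a countable union of sets of the form $C_{t}\setminus\{f>b'\}$ or their complements within the $C_t$. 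This makes $f$ first level Borel. I expect this verification to be the main obstacle; I would first try to show $f$ is countably piecewise continuous on the locally closed pieces $C_{n+1}\setminus int_X(C_n)$ and invoke Jayne--Rogers.

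Finally, suppose $f$ were finitely piecewise continuous, with pieces that can be taken definable and locally closed (as produced by the $X_n$ construction of Theorem~\ref{thm:f-BP}). Then $X=f^{-1}((0,1])$ would be a finite union of relatively open subsets of locally closed sets, hence constructible. This contradicts the choice of $X$, so (2) holds.
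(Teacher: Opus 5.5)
Your proposal has a genuine gap at the step you flag yourself. The function $f(x)=1/(1+\inf\{t>0 : x\in C_t\})$ is upper semicontinuous, hence Baire class one, but it need not be first level Borel. Take $\Rr=(\R,+,\cdot,\Z)$. Then $\Ro=\Rr$, and $\Q$ is a non-constructible set definable in $\Ro$. It is $D_\Sigma$ via $C_{r,s}=\{p/q : p,q\in\Z,\ 0<|q|\le r,\ |p|\le r\}$, so it is an admissible choice of $X$ in your plan. With $C_t=C_{t,1/t}$, your $f$ is a Thomae-type function: $f(p/q)=1/(1+\max(|p|,|q|))$ in lowest terms, and $f=0$ on $\R\setminus\Q$. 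So $f^{-1}(\{0\})=\R\setminus\Q$ is the preimage of a closed set but is not $F_\sigma$. A countable union of continuous maps on locally closed sets is automatically first level Borel, so your Jayne--Rogers fallback cannot succeed either. More generally, extending by $0$ forces $X$ to be $G_\delta$. Even on $X$, the level sets of $\tau(x)=\inf\{t : x\in C_t\}$ have the form $\{\tau\le s\}\setminus\bigcup_{t<s}C_t$, a closed set minus an $F_\sigma$ set, and nothing makes them $F_\sigma$.

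The missing idea is to discretize the index. Since (1) fails, $\Ro$ is not o-minimal. By \cite[Proposition 2.14]{DMS10} and \cite[Theorem B]{hie-discrete}, it therefore defines an unbounded closed discrete set $D\subseteq\R^{>0}$, and the paper sets $f(x)=\min\{b\in D : x\in C_b\}$. This $f$ is a countable union of constant maps on the locally closed sets $C_b\setminus C_{b'}$, where $b'$ is the predecessor of $b$ in $D$. Hence it is first level Borel, and its graph is constructible, so it is definable in $\Ro$.

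Your argument that $f$ is not piecewise continuous is also incomplete. It rests on $X$ being non-constructible, which you only have when $\Ro$ itself is non-constructible. Suppose instead $\Ro$ is constructible but some $\Ro$-definable function violates (1). Then every set you can extract is constructible, including the projections $\pi(C_b)$ from the proof of Theorem~\ref{thm:f-BP}, and your final contradiction disappears. Your argument also only rules out partitions into locally closed pieces.

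The paper needs neither assumption. From the failure of the condition in Remark~\ref{rem:simple-BP}, it keeps the type-definable $Z\subseteq X^{\Nn}$ on which every $C_b^{\Nn}$ has empty interior. It uses $Z$ to show that all the definable sets $X_{n+1}$ are nonempty, where $X_{n+1}$ consists of the points $x\in X$ approximated by points $y\in X_n$ with $f(y)>f(x)$. Continuity of a $D$-valued function on a piece forces local constancy there. So from any partition of $X$ into $n$ arbitrary continuity pieces, the paper extracts $n+1$ distinct pieces, a contradiction.
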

\begin{proof}
Suppose that statement $(1)$ does not hold. Let $\Nn$ be an $\omega_1$-saturated elementary extension of $\Rr$ and, for every set $A$ definable in $\Rr$, let $A^{\Nn}$ denote its interpretation in $\Nn$. By Remark~\ref{rem:simple-BP} and~\cite[N.B. page 1378]{DMS10}, there exists a definable set $X$, a type-definable set $Z\subseteq X^{\Nn}$, and a definable covering $\{C_b : b\in \R^{>0}\}$ of $X$ by closed and bounded (definably compact by Proposition~\ref{prop:Heine-Borel}) sets, such that $C_b \subseteq C_{b'}$ for every $b<b'$ and moreover $C^{\Nn}_b \cap Z$ has empty interior in $Z$ for every $b\in N^{>0}$. 

Since (1) does not hold, then $\Ro$ is not o-minimal and so, by~\cite[Proposition 2.14]{DMS10}, $\Ro$ defines an infinite discrete subset $D$ of $\R$. By \cite[Theorem B]{hie-discrete}, we may assume that $D$ is closed. Using the group operation we may further assume that $D$ is an unbounded subset of $\R^{>0}$.
Let us consider the definable function $f:X\rightarrow \R$ given by 
\[
f(x)=\min\{ b\in D : x \in C_b\}.
\]
This map is clearly first level Borel, since it is a countable union of constant maps on locally closed sets of the form $C_{b}\setminus C_{b'}$, where $b$ is the successor of $b'$ in $D$. Because $D$ is closed and discrete and the fibers $f^{-1}(b)$, for $b\in D$, are locally closed, it is easy to see that the graph of $f$ is constructible, hence $f$ is definable in $\Ro$. We show that it is not piecewise continuous. 

Let $X_0=X$ and, for every $n$, let $X_{n+1}$ be the points $x\in X$ satisfying that, for every $r\in \R^{>0}$, there exists some $y\in X_n$ with $\|x-y\|_\infty<r$ and $f(y)>f(x)$. Clearly each set $X_n$ is definable. 

\begin{claim}\label{claim:Xn}
For every $n$ it holds that $X_n\neq \emptyset$. 
\end{claim}
\begin{claimproof}
We show by induction on $n$ that $Z\subseteq X_n^{\Nn}$ for every $n$. The base case is trivial. Let $x\in Z$ and let $W$ be an open neighborhood of $x$ in $Z$. Since $C_{f(x)}^{\Nn}$ is nowhere dense in $Z$, there exists some $y\in W \setminus C_{f(x)}^{\Nn}$, which implies that $f^{\Nn}(y)>f^{\Nn}(x)$. By induction hypothesis we also have that $y\in X_n^{\Nn}$, and so it follows that $x\in X_{n+1}^{\Nn}$.
\end{claimproof}

Now, towards a contradiction, let $\Yy$ be a finite partition of $X$ into subsets where the restriction of $f$ is continuous. Fix $n=|\Yy|$ and, by Claim~\ref{claim:Xn}, some $x_0\in X_{n}$. Let $Y_0\in \Yy$ be such that $x_0\in Y_0$. By definition of $f$ and continuity on $Y_0$, note that $f|_{Y_0}$ is locally constant. That is, there exists a neighborhood $W_0$ of $x_0$ in $X$ such that $f(Y_0 \cap W_0)=f(x_0)$. On the other hand, since $x_0\in X_{n}$, there exists some $x_1\in W_0 \cap X_{n-1}$ with $f(x_1)>f(x_0)$. In particular we have that $x_1\notin Y_0$. 

Continuing recursively we may find, for every $i\leq n$, some $x_i\in X$, a set $Y_i\in \Yy$, and an open set $W_i$ in $X$ satisfying that 
\begin{enumerate}
\item $x_i \in X_{n-i} \cap Y_i \cap \bigcap_{j\leq i} W_j$, 
\item $f(Y_i \cap W_i)=f(x_i)$, 
\item $f(x_j)<f(x_i)$ for every $j<i$. 
\end{enumerate}
In particular note that, for every $i\leq n$, we have that $x_i \in Y_i \setminus \bigcup_{j < i} Y_j$, and so $Y_j \neq Y_i$ for every $0\leq j < i\leq n$. This however contradicts that $|\Yy|=n$.
\end{proof} 
%Towards a contradiction, let $\Yy$ be a finite partition of $X$ into sets where the restriction of $f$ is continuous. By expanding the language if necessary we may assume, Without loss of generality, that the sets in $\Yy$ are definable. Let $\Yy^{\Nn}$ be the family of interpretations in $\Nn$ of the sets in $\Yy$. 

%Let $x_0\in Z$, and let $Y_0\in \Yy^{\Nn}$ be such that $x_0\in Y_0$. By definition of $f^{\Nn}$ and continuity on $Y_0$, there exists a neighborhood $U_0$ of $x_0$ in $X^{\Nn}$ such that $f^{\Nn}(Y_0 \cap U_0)=f^{\Nn}(x_0)$, in particular $Y_0 \cap U_0\subseteq C^{\Nn}_{f(x_0)}$. However, since $C^{\Nn}_{f(x_0)}$ has empty interior in $Z$, there is some $y_1 \in  Z \cap U_0$ with $y_1\notin C^{\Nn}_{f(x_0)}$, meaning $f^{\Nn}(y_1)>f^{\Nn}(y_0)$ and $y_1\notin Y_0$. We may now proceed recursively, finding sequences $(x_n)_n$ and $(Y_n)_n$ of distinct points and sets in $Z$ and $\Yy^{\Nn}$ respectively, and a decreasing sequence $(U_n)_n$ of open subsets of $X^{\Nn}$, such that $x_n\in Y_n \cap Z \cap U_n$ and $f^{\Nn}(Y_n \cap U_n)=f^{\Nn}(x_n)$ for every $n$. This, however, contradicts the fact that $\Yy$ is finite. 

We devote the rest of the section to proving facts about constructible definably complete expansions of ordered groups.  
A linearly ordered structure $(R,<,\ldots)$ is \emph{generically locally o-minimal} if, for every definable set $X\subseteq R$, there exists a definable subset $Y\subseteq X$ that is open and dense in $X$ and such that, for every $x\in Y$, there is an interval $I$ containing $x$ satisfying that $I\cap X$ is either $\{x\}$ or an interval. We show that every constructible definably complete structure is generically locally o-minimal. This is essentially a known consequence of the fact that constructibility implies the non-definability of a perfect nowhere dense set (see e.g. Lemma 3.26 in~\cite{for21}). 

\begin{lemma}\label{lem:generic-local-omin}
Let $\Rr$ be a definably complete constructible structure. Then $\Rr$ is generically locally o-minimal.      
\end{lemma}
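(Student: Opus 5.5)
Fix a definable $X\subseteq R$; we need a definable $Y\subseteq X$, open and dense in $X$, at whose points $X$ is locally a singleton or an interval. If the order is discrete the statement is trivial, since every point is isolated. So assume $R$ is densely ordered; by Fact~\ref{fct:DC-doag} it is then a divisible ordered abelian group, although only the order topology and definable completeness should matter. The natural candidate is
\[
Y=int(X)\cup \{x\in X : x \text{ is isolated in } X\},
\]
which is definable and open in $X$, and at whose points $X$ locally is an interval or $\{x\}$. Everything therefore reduces to showing that $Y$ is dense in $X$.

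Suppose not. Then there is an open interval $J$ with $X\cap J\neq\emptyset$ and $Y\cap J=\emptyset$. Set $A=X\cap J$. Every point of $A$ is non-isolated in $A$, so $A$ has no isolated points. Also $A$ has empty interior. By constructibility and Lemma~\ref{lem:cons-noise0}, a set with empty interior cannot be dense in an open interval: otherwise $A$ would be dense and codense in that interval. Hence $cl(A)$ has empty interior, i.e.\ $A$ is nowhere dense. Then $K=cl(A)\cap[a,b]$, for suitable $a<b$ in $J$ with $[a,b]\cap A$ containing a point of $A$ in its interior, is a nonempty definable closed, bounded, nowhere dense set. I will first check that a non-isolated point of $A$ stays non-isolated in $cl(A)$, so that shrinking to an interior subinterval and taking closure loses no isolated-point information. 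Then $K$ contains a point $x$ that is not isolated in $K$.

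Let $P$ be the set of non-isolated points of $cl(A)$ inside $(a,b)$. This is definable, closed in $(a,b)$, and contains $A\cap(a,b)$, since points of $A$ are non-isolated in $A$ and hence in $cl(A)$. Thus $cl(P)\supseteq cl(A)\cap(a,b)$, and we obtain a perfect, nowhere dense, nonempty definable set locally. The key step is to show that a definably complete constructible structure cannot define such a set. For this I will use the gaps of $K$: the set $E$ of left endpoints of the bounded complementary intervals of $K$. These endpoints exist by definable completeness, since each complementary open interval is the maximal interval around a point outside $K$ and its endpoints are given by a sup and an inf.

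By nowhere density, $E$ is dense in $K$. I then want $E$ to be codense in $K$, in order to contradict constructibility via Lemma~\ref{lem:cons-noise0}, applied with the subspace $Z=K$. Given a point of $E$ and a neighborhood of it, the neighborhood must contain a point of $K\setminus E$. Here perfectness is used. A left endpoint $e$ is a limit of $K$ only from the left. Suppose every point of $K$ in $(e-\epsilon,e)$ lies in $E$, i.e.\ is itself a left endpoint of a gap. Then $K$ is locally built from gap endpoints, and I will argue via definable completeness that some point of $K$ in that range is not a left endpoint. Consider the infimum $s$ of the points $t<e$ with $[t,e)\cap K\subseteq E$; a sup/inf argument at $s$ should show that $s$ is a limit from the right of left endpoints, so $s$ cannot be a left endpoint itself, whereas $s\in K$ by closedness.

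This density/codensity verification is the main obstacle. The fallback is to invoke the known fact cited before the lemma (e.g.\ Lemma 3.26 in~\cite{for21}) that constructible definably complete structures define no perfect nowhere dense sets.
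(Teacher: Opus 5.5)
Your reduction matches the paper's: take $Y=int(X)\cup\{\text{isolated points of }X\}$, and if $Y$ is not dense, localize to a nonempty closed nowhere dense set with no isolated points and look for a definable subset that is dense and codense in it. The genuine gap is the step you flag yourself, the codensity of $E$ in $K$, and the infimum argument you sketch does not close it. Suppose $K\cap(e-\epsilon,e)\subseteq E$. Then every $t\in(e-\epsilon,e)$ lies in $T=\{t<e : [t,e)\cap K\subseteq E\}$, so $s=\inf T\le e-\epsilon$ lies outside the neighbourhood you are testing. Even granting $s\in K$ and that $s$ is a right-limit of points of $E$, you only get $s\notin E$, hence $s\notin T$, which is perfectly consistent with $s=\inf T$. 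So no contradiction is produced, and a point outside $(e-\epsilon,e+\epsilon)$ could not witness codensity at $e$ anyway. Falling back on the citation to \cite{for21} just turns the heart of the proof into a black box.

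The missing idea is to use both kinds of gap endpoints. Let $E'$ be the set of points of $K$ isolated from the left, i.e.\ right endpoints of complementary intervals. The same nowhere-density argument that makes $E$ dense also makes $E'$ dense. Moreover $E\cap E'=\emptyset$, because a point isolated from both sides would be isolated in $K$. Hence $K\setminus E\supseteq E'$ is dense in $K$, so $E$ is dense and codense in $K$, contradicting Lemma~\ref{lem:cons-noise0}. This is exactly the paper's argument with $C_1$ and $C_0$. Concretely, at $e\in E$ pick $y\in(e-\epsilon,e)\setminus K$ and put $d=\inf(K\cap[y,e])$. Then $d\in K$ and $(y,d)\cap K=\emptyset$. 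Also $d\neq e$, since otherwise $e$ would be isolated. So $d\in K\cap(e-\epsilon,e)$ is isolated from the left, hence (by perfectness) not from the right, hence $d\notin E$. A smaller point: $K=cl(A)\cap[a,b]$ can acquire an isolated point at $a$ or $b$, which would then lie in $E$ and break codensity there. Either take $a,b\notin cl(A)$, or, as the paper does, drop the truncation and the set $P$ entirely. Indeed $cl(A)$ is already closed, nowhere dense and without isolated points, since an isolated point of $cl(A)$ would be an isolated point of $A$.
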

\begin{proof}
Let $X\subseteq R$ be a definable set. Let $Y$ be the union of the interior and the isolated points in $X$. It suffices to show that $Y$ is dense in $X$. 

Suppose that $Y$ is not dense, then, by passing to $X\setminus cl(Y)$ if necessary, we may assume that $X$ has no interior and no isolated points. Since $\Rr$ is constructible, $cl(X)$ must also have empty interior (Lemma~\ref{lem:cons-noise0}), and so we may assume that $X$ is closed. 

Let $C_0$ be the set of points $y$ in $X$ such that $(y',y)\cap X=\emptyset$ for some $y'\in R$, and $C_1$ be the set of points $z$ in $X$ such that $(z,z')\cap X=\emptyset$ for some $z'\in R$. Clearly both $C_0$ and $C_1$ are definable and disjoint (because $X$ has no isolated points). Furthermore, since $X$ has empty interior and $\Rr$ is definably complete, they are both dense in $X$. Hence, $C_0$ is both dense and codense in $X$, contradicting that $\Rr$ is constructible. 
\end{proof}

The next proposition is a direct consequence of Lemma~\ref{lem:generic-local-omin} and Theorem~\ref{thm:main-intro}. It was established for the open core of $\NIP$ expansions of $(\R,<,+)$ in~\cite[Theorem C]{Wal22}.   

\begin{proposition}\label{prop:generic-local-o-min}
Let $\Rr$ be an $\NTP$ definably complete expansion of an ordered group. Then $\Ro$ is generically locally o-minimal.  
\end{proposition}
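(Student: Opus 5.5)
The plan is to apply Lemma~\ref{lem:generic-local-omin} to the open core $\Ro$. That lemma needs two hypotheses on $\Ro$: it must be definably complete and it must be constructible. Generic local o-minimality is then inherited as stated, since the unary sets definable in $\Ro$ are exactly the ones the property quantifies over.

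\emph{Definable completeness of $\Ro$.} Every set definable in $\Ro$ is definable in $\Rr$. Since $\Rr$ is definably complete, every subset of $R$ definable in $\Ro$ has a supremum in $R\cup\{-\infty,+\infty\}$. The order $<$ and the group operation are definable in $\Ro$: the relation $x<y$ is open in $R^2$, and the graph of $+$ is closed in $R^3$. Hence $\Ro$ is itself a definably complete expansion of an ordered group, and it is a uniform structure with the same topology. This matches the remark in Section~\ref{sec:prelim} that the standing assumption on $\Mo$ is automatic when the uniformity comes from a definable order.

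\emph{Constructibility of $\Ro$.} By Lemma~\ref{lem:intervals-comp}, $\Rr$ is definably $\sigma$-compact. Together with the $\NTP$ hypothesis, Theorem~\ref{thm:main-intro}(1) gives that $\Ro$ is constructible. More explicitly, Proposition~\ref{prop:BCT} says $\Rr$ is $\BP$, and Theorem~\ref{thm:main-BP} then yields constructibility of $\Ro$.

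\emph{Conclusion.} Apply Lemma~\ref{lem:generic-local-omin} to $\Ro$. The only point needing care is that the lemma is applied to $\Ro$ and not to $\Rr$: the sets $Y$ built in its proof (interior points and isolated points of $X$) and the auxiliary sets $C_0, C_1$ are all definable in $\Ro$ whenever $X$ is. So the witness $Y$ produced for each $\Ro$-definable $X\subseteq R$ is $\Ro$-definable, as the definition requires.

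I do not expect a real obstacle. The only step needing a moment's thought is checking that $\Ro$ inherits definable completeness and an ordered-group reduct, and this is immediate because $\Ro$ is a reduct containing $<$ and $+$.
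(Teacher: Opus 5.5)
Your proposal is correct and matches the paper, which derives the proposition directly from Theorem~\ref{thm:main-intro} (constructibility of $\Ro$, with definable $\sigma$-compactness supplied by Lemma~\ref{lem:intervals-comp}) combined with Lemma~\ref{lem:generic-local-omin} applied to $\Ro$. Your check that $\Ro$ is itself a definably complete expansion of an ordered group, because $<$ is open and the graph of $+$ is closed, is the step the paper leaves implicit.
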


Definable choice in constructible definably complete expansions of fields was proved by Fornasiero~\cite[Section 4]{for21}. Miller~\cite{miller06-choice} proved it in d-minimal expansions of groups, and Hieronymi-Nell-Walsberg~\cite{HieNelWal18} in noiseless expansions of the real ordered group that do not define a Cantor set (equivalently, by Theorem 8.1 in~\cite{Wal22}, in strongly noiseless expansions of $(\R,<,+)$). We prove it in strongly noiseless definably complete expansions of groups. 
%In"NOTES ON DEFINABLY COMPLETE LOCALLY O-MINIMAL EXPANSIONS OF ORDERED GROUPS" Fujita claims to prove definable choice in definably complete expansion of an ordered groups, but this is known to now be not true in this generality, and in the proof he seems to be assuming local o-minimality.  

%It would be interesting to know whether or not constructible expansions of the p-adics have DSF. Definable Skolem functions was proved first by van den Dries in ``Algebraic Theories with Definable Skolem Functions" (no explicit formula). See also ``A Note on Definable Skolem Functions" from Scowcroft.

%Does definable choice in constructible structure imply that there are universally axiomatizable? Our definable choice functions are 0-definable? I think yes. But then we get universal axiomatization?? (see Lemma 3.1 in Definable choice for a class of weakly o-minimal theories.) Indeed, every 0-definable set is defined by a single formula, but we need to make them function symbols explicitly. So choice functions (for 0-definable sets) are given by function symbols.

\begin{proposition}[Definable choice]\label{prop:choice}
Let $\Rr$ be a definably complete expansion of an ordered group with a positive constant which we denote $1$. Let $A\subseteq R$, and let $X\subseteq R^{m+n}$ be an $A$-definable set such that, for every $b\in R^m$, the fiber $X_b$ is constructible. Then there exists an $A$-definable map $f:R^m\rightarrow R^n$ satisfying that, for every $b\in \R^m$, if $X_b\neq \emptyset$, then $f(b)\in X_b$, and moreover, for every pair $b,b'\in R^m$, if $X_b = X_{b'}\neq \emptyset$, then $f(b)=f(b')$.  

As a consequence, if $\Rr$ is strongly noiseless, then it has definable choice functions. 
\end{proposition}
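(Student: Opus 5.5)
Fix $b$ with $X_b\neq\emptyset$. The plan is to choose a canonical point of $X_b$ by a recipe that only looks at the set $X_b$ and the parameters $0,1$. This makes it uniformly $A$-definable in $b$ and gives $f(b)=f(b')$ whenever $X_b=X_{b'}$. I will argue by induction on $n$, so first handle $n=1$.

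For $n=1$, let $Y=X_b\subseteq R$, a nonempty constructible set. If $Y$ has an element $y$ with $|y|$ minimal, take it: first check whether $\inf\{|y|:y\in Y\}=s$ is attained, and pick $s$ if $s\in Y$, else $-s$. Definable completeness gives $s$. If $s\notin\pm Y$, then $s$ or $-s$ lies in $cl(Y)\setminus Y=\partial Y$. Here constructibility is used. By Fact~\ref{fct:constructible0}, $\partial^{(k)}Y=\emptyset$ for some $k$, and the sets $\partial^{(j)}Y$ alternate between subsets of $cl(Y)\setminus Y$ and subsets of $Y$. I will work near the point $c\in\{s,-s\}$ (prefer $s$ when both qualify) and analyze the germ of $Y$ at $c$ from the relevant side. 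Since $Y$ is a finite Boolean combination of closed sets, I expect some interval $(c,c+\epsilon)$ or $(c-\epsilon,c)$ on which $Y$ contains an open subinterval adjacent to $c$. Alternatively, the points of $Y$ accumulate at $c$ only through a lower-level frontier, which is handled inductively on the frontier rank.

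A cleaner route, which I would try first, is to choose the least $j$ such that $Y_j:=\partial^{(2j)}Y\setminus cl(\partial^{(2j+1)}Y)$ is nonempty. This is the locally closed part, and it is open in $cl(Y_j)$ and contained in $Y$. One then works with a locally closed set $L=Y_j$. Set $s=\inf\{|y|:y\in L\}$. If $\pm s\in L$, pick it. Otherwise $c=s$ (or $-s$) lies in $cl(L)\setminus L$, which is closed, so we can choose the nearest point of $L$ via an interval. Since $L$ is open in its closure, there is a component-like interval $(c,d)$ with $(c,d)\cap cl(L)\subseteq L$ near $c$, namely $d=\inf\big((cl(L)\setminus L)\cap(c,\infty)\big)$, or $c+1$ if that is empty. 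Then $L\cap(c,d)\neq\emptyset$, because points of $L$ accumulate at $c$ from the right. Now $L\cap[c',d']$ is closed and bounded, hence has a minimum, for suitable canonical $c',d'$ built from $c,d$ using the group, e.g. midpoint. Divisibility comes from Fact~\ref{fct:DC-doag}, and $1$ handles the unbounded case. The main obstacle is this step: producing a canonical closed bounded window meeting $L$. I would try an iterative halving, taking the least $k$ such that $L\cap[c+(d-c)/2^{k},d-(d-c)/2^{k}]$ is nonempty. This is definable because $k$ ranges over a definable discrete set? It is not definable over $\N$ in general. Instead I would use $t=\sup\{x\in(c,d):L\cap(c,x)=\emptyset\}$, which equals $c$, and then pick the minimum of $L\cap[e,d]$ for $e=\sup\{x\in(c,d) : L\cap[x,d)\neq\emptyset\}$-type expressions, adjusting until minima exist.

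For general $n$, project: let $\pi(X_b)\subseteq R$ be the first-coordinate projection. This need not be constructible, which is the subtle point. I would instead apply the $n=1$ case to the constructible set $\pi_1(L)$ for $L$ a locally closed piece, or better, choose coordinates successively inside a definably compact piece. By Proposition~\ref{prop:Heine-Borel}, a closed bounded piece of $L$ projects to a closed set (Fact~\ref{fct:comp-basic}(3)). Take its least first coordinate $a_1$, then recurse on the fiber over $a_1$, which is closed and bounded, hence constructible. For the final sentence, strong noiselessness together with Lemma~\ref{lem:cons-noise1} applied in a saturated model gives that all definable sets are constructible. The fibers $X_b$ are therefore constructible, and the proposition yields choice functions.
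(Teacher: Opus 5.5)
\textbf{Main gap.} The gap sits exactly at the step you flag as the main obstacle. You never produce, uniformly in $b$ and depending only on the set $X_b$, a nonempty closed bounded subset of $X_b$, and the localization you propose cannot produce one.

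After passing to the locally closed $L$, you localize at $c=\pm\inf\{|y|:y\in L\}\in cl(L)\setminus L$ and set $d=\inf\big((cl(L)\setminus L)\cap(c,\infty)\big)$. But the closed set $cl(L)\setminus L$ can accumulate at $c$ from the side where $L$ lies, so $d=c$ and $(c,d)=\emptyset$. A concrete case: $(\R,+,\cdot,2^{\Z})$ is d-minimal, hence constructible. In it, take the definable open set $L=\bigcup_{k\in\Z}(4^k/2,4^k)$. Then $c=0$, and $(cl(L)\setminus L)\cap(0,\infty)=2^{\Z}$ has infimum $0$.

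The halving idea and the ``$\sup$-type expressions, adjusting until minima exist'' are not an argument; as you note yourself, iterating $k$ times is not definable. For $n>1$, the route through $\pi_1(L)$ fails because projections need not be constructible. The coordinatewise-minimum route presupposes the same missing closed bounded piece, although that argument is fine once such a piece is available.

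\textbf{The missing idea.} Move away from the frontier uniformly, rather than localizing at a single boundary point. Write $B(S,r)$ for the open $r$-neighbourhood of $S$ in the sup metric. Let $Z=cl(\partial X_b)$ and $W=X_b\setminus Z$. The set $W$ is nonempty by constructibility, since otherwise $X_b$ is dense and codense in $cl(X_b)$ by Lemma~\ref{lem:fr-interior}. Let $r_1=\sup\{s>0: W\setminus B(Z,s)\neq\emptyset\}$ if this is finite, and $r_1=1$ otherwise, and put $C_b=X_b\setminus B(Z,r_1/2)$.
\begin{itemize}
\item Since $\partial X_b\subseteq Z$, we have $C_b=cl(X_b)\setminus B(Z,r_1/2)$, so $C_b$ is closed.
\item It is nonempty, because the set of admissible $s$ is nonempty and downward closed and $r_1/2<r_1$.
\item It is $Ab$-definable uniformly in $b$.
\end{itemize}
For a nonempty closed $C$, let $r_0=\inf\{s>0:B(0,s)\cap C\neq\emptyset\}$. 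Then take the lexicographic minimum of the closed bounded set $C\cap\overline{B(0,r_0+1)}$, using Proposition~\ref{prop:Heine-Borel} and Fact~\ref{fct:comp-basic}(3). This last step is where your successive-minimum idea correctly applies.

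\textbf{Final sentence.} Your derivation of the last claim is also wrong. Lemma~\ref{lem:cons-noise1} only gives density and codensity in the \emph{type-definable} set $\bigcap_n cl(\partial^{(n)}X)$, and strong noiselessness says nothing about such sets. So you cannot conclude that every definable set is constructible; the paper obtains that equivalence only for strong structures, in Theorem~\ref{thm:strong-noise}.

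The correct reduction uses the set you already wrote down as $Y_0$. Replace $Y_b$ by $Y_b'=Y_b\setminus\partial^{(2)}Y_b=cl(Y_b)\setminus cl(\partial Y_b)$.
\begin{itemize}
\item $Y_b'$ is locally closed, hence constructible.
\item $Y_b'$ is nonempty, since otherwise Lemma~\ref{lem:fr-interior} makes $Y_b$ dense and codense in the definable set $cl(Y_b)$, contradicting strong noiselessness.
\end{itemize}
A choice function for $\{Y_b'\}$ is then a choice function for $\{Y_b\}$.
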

\begin{proof}
We first explain how the second paragraph follows from the first one. Suppose that $\Rr$ is strongly noiseless and let $\{ Y_b : b\in R^m\}$ be a definable family of nonempty sets. For each $b\in R^m$, let $Y'_b = cl(Y_b)\setminus cl(\partial Y_b) = Y_b \setminus \partn{2}Y_b$. By strong noiselessness, for every $b\in R^m$ the constructible set $Y'_b$ is nonempty. The first paragraph of the proposition yields the existence of a definable choice function for $\{ Y'_b : b\in B\}$, which is also a choice function for $\{ Y_b : b\in R^m\}$. 

We now prove the first paragraph. Henceforth, for any $x\in R^n$ and $r\in R^{>0}$, let $B(x,r)$ denote the open ball with center $x$ in the supremum $R$-valued metric, and let $\overline{B(x,r)}$ be the corresponding closed ball. For any set $S\subseteq R^n$, let $B(S,r)=\bigcup_{x\in S} B(x,r)$. Fix $b\in B$ with $X_b\neq \emptyset$. Suppose that $X_b$ is closed. Let $r_0=\inf \{ s\in R^{>0} : B(0,s) \cap X_b \neq \emptyset\}$. We may choose $f(b)$ to be the lexicographic minimum of the closed and bounded set $X_b \cap \overline{B(0,r_0 + 1)}$. Observe that this covers the case where $\Rr$ is discrete, so onward we may assume that it is dense. Now suppose that $X_b$ is not closed. We prove the proposition by showing that there exists a nonempty definable closed subset of $X_b$, which is moreover $Ab$-definable uniformly in $b$. 

Let $Z=cl(\partial X_b)$. Since $X_b$ is not closed, this set is nonempty. Since $X_b$ is constructible, we have that $W=X_b \setminus Z \neq \emptyset$. Let $r_1=\sup\{ s \in R^{>0} : W\setminus B(Z,s) \neq\emptyset\}$ if the supremum is finite, and otherwise let $r_1=1$. Now set $C_b:=X_b \setminus B(Z,r_1/2)$. Note that $C_b = cl(X_b)\setminus B(Z,r_1/2)$, and so $C_b$ is closed. Finally, observe that $C_b$ is $Ab$-definable uniformly in $b$. 
\end{proof}

Proposition~\ref{prop:choice} does not generalize to non definably complete constructible structures. The structure $(\Q, +, <, 1, I)$, where $I=(-\sqrt{2}, \sqrt{2})\cap \Q$, has weakly o-minimal theory, and thus it is dp-minimal (see e.g. \cite{simon:guide-NIP}). Applying either Corollary~\ref{cor:shelah-exp} or the literature in viscerality developed in \cite{sim-wals19}, we get that $(\Q, +, <, I)$ is constructible. However, by~\cite{EHK-choice}, this structure does not have definable Skolem functions. 
% $(\Q, +, <, I)$ is non-valuational.
% For definitions of definable choice vs definable Skolem functions see: DEFINABLE CHOICE FOR A CLASS OF WEAKLY O-MINIMAL THEORIES MICHAEL C. LASKOWSKI \& CHRISTOPHER S. SHAW.
The proposition also fails to generalize to noisy definably complete structures. The structure $(\R,<,+,1,\Q)$ is an example of an $\NIP$ structure with o-minimal open core that fails to have definable Skolem functions \cite[5.4.]{DMS10}. 
%The above sentence is explicitly pointed out in \cite{HieNelWal18}

In his proof of Proposition~\ref{prop:choice} for definably complete expansions of ordered fields, Fornasiero~\cite{for21} shows and uses that every definable constructible set is the projection of a definable closed set. A similar fact was also proved in expansions of the real field by Miller and Speissegger \cite[page 202]{MS99}. We point out, through the following example, that this fact is no longer true if one drops the field assumption.  

\begin{example}
 Consider the real ordered group $\Rr=(\R,<,+)$. It is a well-known fact that this structure has quantifier elimination and thus it is o-minimal and every definable function $f:\R\rightarrow \R$ is piecewise linear. We show that the interval $I=(0,\infty)$ is not the projection of a closed definable set. Towards a contradiction let $C\subseteq \R^n$ be a closed definable set and $\pi:C\rightarrow \R$ be a projection, chosen without loss of generality to the first coordinate, such that $\pi(C)=I$. For each $t\in I$, let $C_t$ be the corresponding nonempty fiber of $C$. Since $C$ is closed and $\pi(C)=I$, observe that $\gamma(t)$ cannot converge in $\R^n$ as $t\rightarrow 0$. By o-minimality, there must exist a coordinate function $\gamma_i:I\rightarrow \R$ of $\gamma$ that converges to either $-\infty$ or to $\infty$ as $t\rightarrow 0$. This however contradicts the linearity of $\Rr$.    
\end{example}

%Using Proposition~\ref{prop:choice} maybe you get that in structures where every definable set is the projection of a constructible set (i.e. interdefinable with open core while open core is existentially closed) there are definable Skolem functions. 

Given a nonempty set $X\subseteq \R^n$, the \emph{naive dimension} of $X$, which we denote henceforth by $\dim(X)$, is the maximum between zero and the largest $0< d \leq n$ such that there exists some projection $\pi:\R^n \rightarrow \R^d$ satisfying that $\pi(X)$ has nonempty interior. By convention $\dim \emptyset = -\infty$.  

The dimension theory for $D_\Sigma$ sets (see Remark~\ref{rem:simple-BP}) was developed in~\cite{comp-set}, and further analyzed in generically locally o-minimal expansions of $(\R,<,+)$ in the preprint version~\cite{wal22-preprint} of~\cite{Wal22}. Through the next proposition we showcase that naive dimension is well-behaved in constructible expansions of $(\R,<,+)$. In particular, by Theorem~\ref{thm:main-intro}, this applies to the open core of every $\NTP$ expansion of $(\R,<,+)$. By~\cite[Section 10]{wal22-preprint}, the proposition holds in the more general setting of strongly noiseless expansions of $(\R,<,+)$. We nevertheless state it in the constructible setting because its proof follows mostly elementarily from~\cite{comp-set}.
%Things that don't hold in our setting: No small boundaries. No "dim A= dim B, for A subset of B, implies that A has nonempty interior in B". Fails even for zero-dimensional sets.   

%Notion of $D_\Sigma$ is used in the next proposition.
\begin{proposition}\label{prop:dim}
Let $\Rr$ be a constructible expansion of $(\R,<,+)$. Let $X, Y\subseteq \R^n$ be definable sets. The following hold. 
\begin{enumerate}
    \item \label{itm:dim-Lebesgue} The naive dimension $\dim(X)$ of 
    $X$ is equal to its classical topological dimension\footnote{That is, small inductive dimension, large inductive dimension, or Lebesgue covering dimension, all of which are identical for subsets of $\R^n$.}.
    \item \label{itm:dim-union} $\dim(X\cup Y)=\max\{ \dim X, \dim Y\}$.
    \item \label{itm:dim-cl} $\dim X = \dim cl(X)$.
    \item \label{itm:dim-f} If $f:X\rightarrow \R^m$ is a definable function, then $\dim f(X) \leq \dim X$. In particular dimension is maintained under definable bijections.
    \item \label{itm:dim-fiber} Let $0<m<n$, and let $Z\subseteq \R^m$ denote the projection of $X$ to some $m$ coordinates. For every $a\in Z$, let $X_a$ denote the corresponding fiber of $X$. For every $0\leq d \leq n-m$, let $S(d)=\{ a\in Z : \dim X_a =d\}$. Then each set $S(d)$ is definable and 
    \[
    \dim X = \max_{0\leq d \leq n-m} \dim S(d) + d.  
    \]
\end{enumerate}
\end{proposition}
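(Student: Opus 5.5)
The plan is to reduce everything to the dimension theory for $D_\Sigma$ sets developed in~\cite{comp-set}. In a constructible expansion of $(\R,<,+)$, every definable set is constructible, hence $D_\Sigma$ by~\cite[1.10]{DMS10} (see Remark~\ref{rem:simple-BP}). I will take from~\cite{comp-set} the results that, for $D_\Sigma$ sets, naive dimension agrees with topological dimension, is subadditive under finite unions, does not increase under definable continuous maps, and satisfies the fiber formula. The main work is to check that the hypotheses of those results hold for arbitrary definable sets and functions in our setting. The natural tool for this is constructibility combined with generic continuity, Theorem~\ref{thm:f-BP}.

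\textbf{Items (1) and (2).} Item~(\ref{itm:dim-Lebesgue}) should follow directly from the corresponding result for $D_\Sigma$ sets. Alternatively, one can argue by hand: a $D_\Sigma$ set is $F_\sigma$, so it is a countable union of compact sets. By the countable sum theorem for topological dimension together with the classical fact that a subset of $\R^d$ has dimension $d$ iff it has interior, the two notions coincide once we know $\dim$ is the maximum over the compact pieces. For item~(\ref{itm:dim-union}), write $X\cup Y$ as a countable union of closed sets. If $\pi(X\cup Y)$ has interior, the Baire category theorem applied to the countably many closed sets $\pi(C_b\cap K)$ gives that $\pi(X)$ or $\pi(Y)$ has interior. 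This uses that projections of compact sets are compact.

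\textbf{Item (3).} Write $cl(X)=X\cup\partial X$. Since $X$ is constructible, $\partial X$ has empty interior in $cl(X)$ by Lemma~\ref{lem:cons-noise0}; in fact $\partial X$ is nowhere dense in $cl(X)$. Apply item~(\ref{itm:dim-union}) together with an induction on $\partn{k}X$, which terminates by Fact~\ref{fct:constructible0}. The step needing care is that nowhere density in $cl(X)$ does not by itself bound naive dimension. I will instead argue with $\pi(cl(X))\subseteq cl(\pi(X))$: if $\pi(cl X)$ has interior, then $\pi(X)$ is somewhere dense, and $\pi(X)$ is constructible, so $\pi(X)$ has interior.

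\textbf{Items (4) and (5).} For item~(\ref{itm:dim-f}), use Theorem~\ref{thm:f-BP}, which applies because the open core is $\Rr$ itself and is $\BP$ by constructibility plus Lemma~\ref{lem:SHBP-Fsigma-cons} via the $D_\Sigma$ route. This partitions $X$ into finitely many constructible pieces on which $f$ is continuous. By item~(\ref{itm:dim-union}), it then suffices to treat continuous $f$, which is the $D_\Sigma$ result of~\cite{comp-set}. Item~(\ref{itm:dim-fiber}) is the fiber formula for $D_\Sigma$ sets. Definability of $S(d)$ is immediate, since ``$\pi(X_a)$ has interior'' is first order.

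I expect the main obstacle to be verifying that the hypotheses of the results in~\cite{comp-set} (type A, or closedness of the families involved) are satisfied. If they are not, I would reprove the fiber inequality $\ge$ directly using Baire category in $\R^m$.
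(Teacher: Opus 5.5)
Your treatment of item~(4) has a genuine gap. You obtain a finite partition of $X$ into pieces on which $f$ is continuous from Theorem~\ref{thm:f-BP}, claiming that $\Rr$ is \BP{} ``by constructibility plus Lemma~\ref{lem:SHBP-Fsigma-cons}''. That lemma runs the other way: \BP{} implies that $\dF$ sets are constructible. Nothing in the paper derives \BP{} from constructibility, nor even the weaker property of Remark~\ref{rem:simple-BP}. \BP{} is obtained only from $\NTP$ (Proposition~\ref{prop:BCT}), and in Theorem~\ref{thm:Baire-class-one} finite piecewise continuity is a requirement in addition to constructibility of the open core.

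Generic continuity alone, which constructibility over $\R$ does give via Baire class one, does not rescue the argument. The chain $X_0\supseteq X_1\supseteq\cdots$ need not terminate, and a nowhere dense subset need not have smaller naive dimension (e.g.\ $\{0\}\subseteq\{0\}\cup\{1/k:k\geq 1\}$), so item~(2) cannot be iterated. The paper avoids continuity altogether by applying \cite[Theorem E]{comp-set} directly, since the graph of $f$ is constructible, hence $D_\Sigma$.

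If you want to keep your reduction to the continuous case, use a countable rather than a finite decomposition:
\begin{itemize}
\item Write $\Gamma(f)=\bigcup_k C_k$ with $C_k$ compact, using the cofinal indices $(k,1/k)$ of the $D_\Sigma$ family.
\item $f$ is continuous on $A_k=\pi(C_k)$, since its graph there is compact (first paragraph of the proof of Theorem~\ref{thm:f-BP}).
\item If a coordinate projection of $f(X)=\bigcup_k f(A_k)$ has interior, the Baire category theorem gives some $k$ for which that projection of $f(A_k)$ has interior. The continuous case then bounds this by $\dim A_k\leq\dim X$.
\end{itemize}

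There are three further points.
\begin{itemize}
\item The type~A hypothesis you list as the main obstacle is immediate, and this is how the paper checks it. A definable dense $\omega$-order $(D,\prec)$ has $D$ countable and somewhere dense, hence dense and codense in an open set, which Lemma~\ref{lem:cons-noise0} forbids.
\item For item~(5), your fallback covers only the inequality $\geq$. The substantive direction is $\leq$: you must rule out that fibers of small dimension over a small parameter set sweep out a set of larger dimension. The paper does not take this from \cite{comp-set}. It uses \cite[Theorem 10.16]{wal22-preprint} for generically locally o-minimal expansions, via Lemma~\ref{lem:generic-local-omin}, after a coordinate permutation justified by~(4).
\item Your by-hand alternative for item~(1) does not work. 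The reduction to compact pieces is fine, but on a compact piece naive and topological dimension can differ in general. For $C$ the middle-thirds Cantor set, $\{(x+y,x):x,y\in C\}$ is homeomorphic to $C\times C$ yet projects onto $[0,2]$. So you genuinely need \cite[Proposition 5.7]{comp-set}.
\end{itemize}

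Items~(2) and~(3) are fine. Item~(3) is the paper's argument. For~(2), the paper argues more directly: if $\pi(X)$ and $\pi(Y)$ both had empty interior, one of them would be dense and codense in an open subset of $\pi(X\cup Y)$. In your Baire argument, decompose $X$ and $Y$ separately rather than $X\cup Y$, so that each closed piece lies in one of them.
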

\begin{proof}
By~\cite[1.10]{DMS10} every constructible definable set is $D_\Sigma$ (see Remark~\ref{rem:simple-BP}). Furthermore, by Lemma~\ref{lem:cons-noise0}, $\Rr$ cannot define a dense $\omega$-order in the sense of~\cite{comp-set}. It follows that statement~\eqref{itm:dim-Lebesgue} is given by~\cite[Proposition 5.7]{comp-set}, and statement~\eqref{itm:dim-f} follows from~\cite[Theorem E]{comp-set}, where the results are achieved for $D_\Sigma$ sets in expansions of $(\R,<,+)$ that do not define a dense $\omega$-order. 

While statements~\eqref{itm:dim-union} and~\eqref{itm:dim-cl} also follow from~\cite{comp-set}, we include the proofs, which are straightforward in our context. For~\eqref{itm:dim-union}, note that clearly $\dim(X\cup Y) \geq \max\{ \dim X, \dim Y\}$. For the other inequality, let $\pi$ be a projection such that $\pi(X\cup Y)=\pi(X)\cup\pi(Y)$ has nonempty interior $U$. By constructibility, either $\pi(X)$ or $\pi(Y)$ must have nonempty interior, since otherwise both sets would be dense and codense in $U$, so $\dim(X\cup Y) \leq \max\{ \dim X, \dim Y\}$. For~\eqref{itm:dim-cl}, we again prove the non-trivial implication, namely $\dim cl(X) \leq \dim X$. Let $\pi$ be a projection such that $\pi(cl(X))$ has nonempty interior. Then, by constructibility and the fact that $\pi(X)$ is dense in $\pi(cl(X))$, we get that $\pi(X)$ has nonempty interior too. 

For statement \eqref{itm:dim-fiber} note that, by \eqref{itm:dim-f}, we may substitute $X$ by any coordinate permutation, and so we may assume that $Z$ is the projection to the first $m$ coordinates. Under this assumption, statement \eqref{itm:dim-fiber} was proved for generically locally o-minimal expansions of $(\R,<,+)$ in~\cite[Theorem 10.16]{wal22-preprint}, and so it follows by Lemma~\ref{lem:generic-local-omin}.
\end{proof}

\section{Strong uniform structures} \label{sec:strong}

In this section we strengthen the $\NTP$ condition to the property of being strong. Among $\NIP$ structures, being strong is equivalent to being strongly dependent. Following the treatment in~\cite{Wal22}, we characterize all strong expansions of $(\R,<,+)$. More generally, we show that every strong expansion of a definably complete group has locally o-minimal open core (Theorem~\ref{thm:strong-intro}). In the wider context of strong uniform structures, we begin the section by showing that constructibility is equivalent to strong noiselessness, and that having constructible open core is maintained under elementary equivalence. 

%\begin{proposition}
%Let $(X,\tau)$ be a definable topological space and $Y$ be a definable subset with %$\taurk(Y)<\omega$. Exactly one of the following holds:
%\begin{enumerate}
%    \item $Y$ is constructible. 
%    \item There exists some $n$ such that $\partial^{(m)}Y = \partial^{(m+2)}Y \neq \emptyset$ for every $m\geq n$. In particular, $\partial^{(n)} X$ is dense a codense in $cl(\partial^{(n)} X)$. 
%\end{enumerate}
%\end{proposition}
%\begin{proof}  %the proof is not exactly the statement
%We assume that $Y$ is not constructible. Since $\taurk(Y)
%<\omega$ and the family of sets $\partial^{(2i)}Y$ is a decreasing chain, there must exists %some even number $n$ such that $\taurk(\partial^{(n)} Y)= \taurk(\partial^{(n+2)} Y)$. %Recall that the $\tau$-rank of a set is always the same as the $\tau$-rank of its closure, %so $\taurk(cl(\partial^{(n)} Y))= \taurk(cl(\partial^{(n+1)} Y))$. By Lemma~\ref{lem:C-rk-%interior}, $cl(\partial^{(n+1)} Y)$ has nonempty interior in $cl(\partial^{(n)} Y)$. Let %$U$ be a definable open set such that $Z=U\cap cl(\partial^{(n)} Y) \subseteq %cl(\partial^{(n+1)} Y)$. We show that $Y\cap Z$ is dense a codense in $Z$. 

%Since $Z$ is open in $cl(\partial^{(n)} Y)$ any $x\in Z\cap \partial^{(n+1)} Y$ is clearly %in the closure of $Z \cap \partial^{(n)} Y= Z \cap \partial^{(n+1)} Y$. The converse is %analogous.   
%\end{proof}

In the proof of the next lemma, we recover the notation $(U\setminus V)[Y]=U[Y]\setminus V[Y]$ used in Section~\ref{sec:ntp}.

%Localized inp-pattern appears here:
\begin{lemma}\label{lem:closed-chain}
Let $\Mm$ be a uniform structure. Let $\kappa>1$ be a cardinal and $\{ \X{\alpha} : \alpha < \kappa\}$ be a family of nonempty definable subsets of $M^d$ satisfying that, for all $\alpha < \beta < \kappa$,
\begin{enumerate}
    \item $\X{\beta} \subseteq \X{\alpha}$,
    \item $\X{\beta}$ is nowhere dense in $\X{\alpha}$ (in the subspace topology).
\end{enumerate}
Then $\X{0}$ admits an inp-pattern of depth $\kappa-1$. 
\end{lemma}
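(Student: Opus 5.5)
We build the inp-pattern on the model of the proof of Lemma~\ref{lem:noise-NTP2}, now with one row per level of the chain. For each $\alpha$ with $1\leq\alpha<\kappa$ we use the formula $\varphi_\alpha(x;U,V)$ defining $(U\setminus V)[\X{\alpha}]=U[\X{\alpha}]\setminus V[\X{\alpha}]$, where $U,V$ range over the definable basis $\BU$ of the uniformity on $M^d$, and we take $k_\alpha=2$. These $\kappa-1$ rows give the required depth. Fix finitely many indices $\alpha_1<\cdots<\alpha_n$ (all $\geq 1$) and some $m$. As before, in row $i$ we choose a decreasing chain $U_{i,1}\supseteq U_{i,2}\supseteq\cdots\supseteq U_{i,m+1}$ in $\BU$. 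The annuli $(U_{i,j}\setminus U_{i,j+1})[\X{\alpha_i}]$, $j\leq m$, are then pairwise disjoint, so $2$-inconsistency is automatic. The real work is the consistency condition, relativized to $\X{0}$. We prove the stronger statement that for every $\eta$ the intersection $\bigcap_i (U_{i,\eta(i)}\setminus U_{i,\eta(i)+1})[\X{\alpha_i}]\cap \X{0}$ has nonempty interior in $\X{0}$.

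The construction is by recursion, but the order matters. Since $\X{\beta}\subseteq\X{\alpha}$ for $\alpha<\beta$, the annuli must be placed so that the ones around \emph{smaller} sets $\X{\alpha_n}$ can still be used near points of the larger sets. So we choose the rows from the deepest level outward, handling $i=n$ first and $i=1$ last. We carry the following invariant. After rows $n,\dots,i+1$ have been built, for each partial $\eta$ the set
\[
W_\eta=\mathrm{int}_{\X{\alpha_i}}\Bigl(\X{\alpha_i}\cap\bigcap_{l>i}(U_{l,\eta(l)}\setminus U_{l,\eta(l)+1})[\X{\alpha_l}]\Bigr)
\]
is nonempty. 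Here the interior is taken in $\X{\alpha_i}$, or in $\X{0}$ at the final stage. The base case is $W_\emptyset=\X{\alpha_n}\neq\emptyset$.

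For the inductive step from level $\alpha_{i+1}$ to level $\alpha_i$, we need nonempty sets open in $\X{\alpha_i}$ that lie inside every annulus chosen so far. Such a set $W_\eta$, open in $\X{\alpha_{i+1}}$, is $G\cap\X{\alpha_{i+1}}$ for some open $G$. The sets $(U\setminus V)[\X{\alpha_l}]$ themselves are not open. However, as in the proof of Lemma~\ref{lem:noise-NTP2}, each point of $W_\eta$ can be taken to be an interior point, in $M^d$, of the annuli containing it. We therefore strengthen the invariant to track open subsets $G_\eta\subseteq M^d$ with $G_\eta\cap\X{\alpha_{i+1}}\neq\emptyset$ and $G_\eta\cap \X{0}\subseteq$ all current annuli. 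With this, $G_\eta\cap\X{\alpha_i}$ is a nonempty open subset of $\X{\alpha_i}$.

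Hypothesis (2) says $\X{\alpha_{i+1}}$ is nowhere dense in $\X{\alpha_i}$, and $\X{\alpha_i}$ is itself nowhere dense in the larger level. Using this, we pick $U_{i,1}$ arbitrary and then recursively $U_{i,j+1}\subseteq U_{i,j}$ so that, for every $\eta$ and $j$, some point of $G_\eta\cap\X{\alpha_{i-1}}$ (or of $\X{0}$ when $i=1$) lies outside $cl(\X{\alpha_i})$, close to it but at positive distance. The choice is the same as in Lemma~\ref{lem:noise-NTP2}: we use $V\circ V\subseteq$ a neighbourhood missing the point, so that a small uniform neighbourhood of that point lies in $(U_{i,j}\setminus U_{i,j+1})[\X{\alpha_i}]$. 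We then shrink $G_\eta$ to this neighbourhood. Only finitely many constraints arise at each stage, so downward directedness of $\BU$ lets us choose one $U_{i,j+1}$ that works for all $\eta$ at once.

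The main obstacle is bookkeeping the levels correctly. The witness points for row $i$ must lie in the next larger set $\X{\alpha_{i-1}}$ but off $cl(\X{\alpha_i})$, and they must remain near $\X{\alpha_i}$ while staying inside the open sets inherited from the deeper rows. We secure this by choosing witnesses inside $G_\eta$ and invoking hypothesis (2), nowhere density of $\X{\alpha_i}$ in $\X{\alpha_{i-1}}$, on the nonempty open set $G_\eta\cap\X{\alpha_{i-1}}$. Such a set meets $\X{\alpha_i}$ because $\X{\alpha_{i+1}}\subseteq\X{\alpha_i}$. Hence it contains points of $\X{\alpha_{i-1}}$ that are arbitrarily close to $\X{\alpha_i}$ but not in its closure. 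Once this invariant is set up, the remaining verifications are routine entourage manipulations.
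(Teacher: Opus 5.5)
Your strategy is the paper's. You use the formulas defining $(U\setminus V)[\X{\alpha}]$ and decreasing entourages in each row, which makes the annuli pairwise disjoint. You build the rows from the deepest level outward. You separate witness points in $\X{\alpha_{i-1}}\setminus cl(\X{\alpha_i})$ from $\X{\alpha_i}$ by a $V\circ V$ argument; this is Claim~\ref{claim:UV}, and choosing witnesses off $cl(\X{\alpha_i})$ replaces the paper's reduction to sets closed in $\X{0}$.

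However, your strengthened invariant is off by one at exactly the step you flag as the main obstacle, and the justification you give there is false. When row $i$ is built, the inherited open set satisfies $G_\eta\cap \X{0}\subseteq (U_{i+1,\eta(i+1)}\setminus U_{i+1,\eta(i+1)+1})[\X{\alpha_{i+1}}]$. This annulus is disjoint from $\X{\alpha_{i+1}}$: every entourage contains the diagonal, so $V[\X{\alpha_{i+1}}]\supseteq \X{\alpha_{i+1}}$. Hence $G_\eta\cap\X{\alpha_{i+1}}=\emptyset$. So ``$G_\eta\cap \X{\alpha_{i+1}}\neq\emptyset$'' cannot hold once row $i+1$ is in place, and the inference ``$G_\eta\cap\X{\alpha_{i-1}}$ meets $\X{\alpha_i}$ because $\X{\alpha_{i+1}}\subseteq \X{\alpha_i}$'' does not go through. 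That $G_\eta$ meets $\X{\alpha_i}$ is genuinely needed. It is what supplies witnesses of $\X{\alpha_{i-1}}\setminus cl(\X{\alpha_i})$ inside $G_\eta\cap U_{i,j}[\X{\alpha_i}]$ for arbitrarily small $U_{i,j}$.

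The correct invariant is the paper's $(ii)_n$. After rows $i+1,\dots,n$ are built, each $G_\eta$ is open, lies inside the chosen annuli, and meets the next \emph{larger} level $\X{\alpha_i}$, not the deeper one $\X{\alpha_{i+1}}$. It holds because the row-$(i+1)$ witness was taken in $\X{\alpha_i}$ and $G_\eta$ was shrunk to a neighbourhood of that witness. In the base case, $G_\emptyset=M^d$ meets $\X{\alpha_n}\neq\emptyset$. Your first formulation (nonempty interior relative to $\X{\alpha_i}$ after rows $n,\dots,i+1$) already has the right index, and your construction of witnesses and shrinking produces exactly this invariant. With the strengthened invariant and the closing justification re-indexed, the argument goes through and coincides with the paper's proof.
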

\begin{proof}
Let $\BU$ be a definable basis for the product uniformity on $M^d$. Let $\{ \X{\alpha} : \alpha < \kappa\}$ be as in the lemma. By passing to the closures if necessary, we may assume that every set $\X{\alpha}$, for $0<\alpha<\kappa$, is closed in $\X{0}$. For each $0<\alpha<\kappa$, let $\varphi_\alpha$ be a (partitioned) formula that defines the family of sets of the form $(U\setminus V)[\X{\alpha}]$, for $U,V\in \BU$. We prove the existence of an inp-pattern of depth $\kappa-1$ witnessed by the formulas $\varphi_\alpha$. Specifically, we describe, for every $m>1$ and $0<\alpha_1<\cdots\alpha_m < \kappa$, entourages $U_{i,j}$, for $1\leq i\leq m$ and $1\leq j\leq m+1$, such that the family of sets $\{ (U_{i,j} \setminus U_{i,j+1})[\X{\alpha_i}] : i, j\leq m\}$ forms an inp-pattern of depth $m$ and length $m$ in $\X{0}$ (namely conditions (1) and (2) in the definition of inp-pattern in Section~\ref{sec:prelim} are satisfied, and moreover the families described in condition (2) are consistent with $\X{0}$). For simplicity of notation, and since there will be no room for confusion, we set $\alpha_i=i$ for every $i\leq m$.

Let us fix $m>1$. For any $n\leq m$, suppose that we have defined entourages $U_{i,j}$ for $n\leq i \leq m$ and $j\leq m+1$, and consider the following two statements.
\begin{description}
    \item[$(i)_n$] Each family $\{ (U_{i,j} \setminus U_{i,j+1})[\X{i}] : j\leq m\}$, for $n\leq i\leq m$, is pairwise disjoint. 
    \item[$(ii)_n$] For every map $\eta:\{n,\ldots, m\}\rightarrow \{1,\ldots, m\}$, the intersection of the family $\{ (U_{i,\eta(i)} \setminus U_{i,\eta(i)+1})[\X{i}] : n\leq i\leq m \}$ has nonempty interior which intersects $\X{n-1}$.
\end{description}
We prove the lemma by proving $(i)_1$ and $(ii)_1$. We proceed by reverse induction on $n\leq m$. We will use a simple claim. 

\begin{claim}\label{claim:UV}
Let $\Ff$ be a finite family of nonempty open sets in $M^d$, each intersecting $\X{n}$. For any entourage $U\in \BU$, there exists another entourage $V\in \BU$ such that, for all $W\in \Ff$, the \textbf{interior} of the set $W \cap (U\setminus V)[\X{n}]$ intersects the set $\X{n-1}$.
\end{claim}
\begin{claimproof}
Since $\X{n}$ has empty interior in $\X{n-1}$, for each $W\in \Ff$ there exists a point $x_W \in W\cap U[\X{n}] \cap  \X{n-1} \setminus \X{n}$. Since $\X{n}$ is closed, there exists $V \in \BU$ such that $\{x_W : W\in \Ff\}$ is disjoint from the closure of $V[\X{n}]$. The claim follows. 
\end{claimproof}

Let $U_{m,1}\in \BU$ be any entourage. By repeated application of Claim~\ref{claim:UV} (with $\Ff=\{M^d\}$) there exists a decreasing sequence of entourages $U_{m,j}$, for $j\leq m+1$, satisfying that the interior of each set $(U_{m,j} \setminus U_{m,j+1})[\X{m}]$ intersects the set $\X{m-1}$. In particular condition $(ii)_m$ is clearly satisfied. Since the sequence of entourages is decreasing, condition $(i)_m$ is also immediate. 

We now fix $n<m$, and by induction hypothesis we assume that both $(i)_{n+1}$ and $(ii)_{n+1}$ hold, witnessed by entourages $U_{i,j}$, for $n<i\leq m$ and $j\leq m+1$. For each map $\eta:\{n+1,\ldots, m\}\rightarrow \{1,\ldots, m\}$, let us denote the interior of the intersection $\bigcap \{(U_{i,\eta(i)} \setminus U_{i,\eta(i)+1})[\X{i}] : n< i\leq m \}$ by $W_\eta$. By condition $(ii)_{n+1}$, every such open set $W_\eta$ intersects $\X{n}$. Let $U_{n,1}\in \BU$ be any entourage. By repeated application of Claim~\ref{claim:UV}, we may find a decreasing sequence of entourages $U_{n,j}$, for $j\leq m+1$, satisfying that, for any $\eta:\{n+1,\ldots, m\}\rightarrow \{1,\ldots, m\}$ and $j\leq m$, the interior of the set $W_\eta \cap (U_{n,j} \setminus U_{n,j+1})[\X{n}]$ intersects the set $\X{n-1}$. Condition $(ii)_n$ clearly follows. Furthermore, since once again the entourages $U_{n,j}$ are decreasing, condition $(i)_n$ is immediate. 
\end{proof}

Lemma~\ref{lem:closed-chain} yields that strong uniform structures have a property known as the ``closed chain condition". For an exploration of the topological consequences of this property, the interested reader is directed to~\cite{wal22-preprint}. 

We now present two short lemmas that will be used in proving Theorem~\ref{thm:strong-noise}.

\begin{lemma}\label{lem:fr-interior}
Let $X$ be a subset of a topological space. If $cl(\partial X)$ has nonempty interior $Z$ in $cl(X)$, then $X$ is dense and codense in $Z$. In particular $X$ is not constructible.  
\end{lemma}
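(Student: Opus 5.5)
Let $Z$ denote the interior of $cl(\partial X)$ relative to $cl(X)$, assumed nonempty. I will show directly that $X$ is dense and codense in $Z$, and then obtain non-constructibility from Lemma~\ref{lem:cons-noise0}.

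For density: since $Z\subseteq cl(X)$, every point $z\in Z$ is a limit of points of $X$. The point to check is that these points can be taken inside $Z$, since density in $Z$ means $Z\subseteq cl(X\cap Z)$. Write $Z=O\cap cl(X)$ with $O$ open in the ambient space. Let $z\in Z$ and let $W$ be an open neighbourhood of $z$. Then $W\cap O$ is also an open neighbourhood of $z$ and $z\in cl(X)$, so $W\cap O$ meets $X$. Any point of $W\cap O\cap X$ lies in $O\cap cl(X)=Z$, hence $W$ meets $X\cap Z$, and therefore $z\in cl(X\cap Z)$.

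For codensity, I need $Z\subseteq cl(Z\setminus X)$. Take $z\in Z$ and $W$, $O$ as above. Since $z\in cl(\partial X)$ and $W\cap O$ is an open neighbourhood of $z$, there is a point $y\in W\cap O\cap\partial X$. Because $\partial X=cl(X)\setminus X$, we have $y\in cl(X)\cap O=Z$ and $y\notin X$. So $W$ meets $Z\setminus X$, and $z\in cl(Z\setminus X)$. In fact only $z\in cl(\partial X)$ was used here, so codensity holds for every point of $O\cap cl(\partial X)$.

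Since $Z\neq\emptyset$ and $X$ is dense and codense in $Z$, Lemma~\ref{lem:cons-noise0} shows that $X$ is not constructible. The only delicate point is to handle the relative interior by intersecting with the open set $O$, so that the witnesses we produce land in $Z$ itself and not merely in $cl(X)$. Beyond that, no obstacle is expected.
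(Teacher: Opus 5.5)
Your proof is correct and is essentially the paper's argument. The paper also writes $Z=W\cap cl(X)$ with $W$ open, and uses $cl(X)=X\cup\partial X$ and $X\cap cl(\partial X)=\partial^{(2)}X$ to obtain $W\cap\partial X\subseteq cl(W\cap X)$ and $W\cap X\subseteq cl(W\cap\partial X)$; this is your neighbourhood argument, split according to whether a point of $Z$ lies in $X$ or in $\partial X$, and it concludes via Lemma~\ref{lem:cons-noise0} just as you do.
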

\begin{proof}
Let $W$ be a nonempty open set such that $W\cap cl(X) = Z \subseteq cl(\partial X)$. Note that $cl(X)= X \cup \partial X$ and $cl(\partial X)=\partial X \cup \partn{2}X$, where the unions are disjoint and $X\cap cl(\partial X)=\partn{2}X$. Hence $W\cap X \subseteq \partn{2}X$. So $W\cap X \subseteq cl(W\cap \partial X)$. Since $W\cap \partial X \subseteq cl(W\cap X)$, it follows that $X$ is dense and codense in $W\cap cl(X) = Z$. The last sentence in the lemma follows from Lemma~\ref{lem:cons-noise0}.
\end{proof}

\begin{lemma}\label{lem:strong-noise}
Let $\Mm$ be a uniform structure and $X$ be a definable set that is strong (i.e. $X$ does not admit an inp-pattern of depth $\omega$). Exactly one of the following holds.
\begin{enumerate}
    \item $X$ is constructible. 
    \item There exists a definable set $Z\subseteq cl(X)$ such that $X$ is dense and codense in $Z$. 
\end{enumerate}
\end{lemma}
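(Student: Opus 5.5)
Exclusivity is immediate: if (2) holds with $Z\neq\emptyset$, Lemma~\ref{lem:cons-noise0} says $X$ is not constructible. (Here $Z$ must be nonempty, as in the definition of strong noiselessness.) So the real work is to assume $X$ is not constructible and produce a \emph{definable} set $Z\subseteq cl(X)$ in which $X$ is dense and codense. Lemma~\ref{lem:cons-noise1} only gives a type-definable such set, so I will not use it. Instead I will combine Lemma~\ref{lem:fr-interior} with Lemma~\ref{lem:closed-chain}, using the strongness of $X$.

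First, consider the decreasing chain of definable closed sets $Y_n = cl(\partial^{(n)}X)$. It is decreasing because $\partial^{(n+1)}X\subseteq cl(\partial^{(n)}X)$. Since $X$ is not constructible, Fact~\ref{fct:constructible0} makes every $Y_n$ nonempty. Next, suppose that for some $n$ the set $Y_{n+1}$ has nonempty interior in $Y_n$. Then Lemma~\ref{lem:fr-interior}, applied to the set $\partial^{(n)}X$, gives that $\partial^{(n)}X$ is dense and codense in the relative interior $Z'$ of $cl(\partial^{(n+1)}X)$ in $cl(\partial^{(n)}X)$. This $Z'$ is definable, since interiors of definable sets are definable using the definable basis, and $Z'\subseteq Y_n\subseteq cl(X)$.

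It remains to transfer density and codensity from $\partial^{(n)}X$ back to $X$ itself. I expect this step to need some care. For $n$ even, $\partial^{(n)}X\subseteq X$ and $\partial^{(n+1)}X\cap X=\emptyset$. On $Z'$ the sets $\partial^{(n)}X$ and $\partial^{(n+1)}X=Z'\setminus\partial^{(n)}X$ (within $Y_n$) are both dense. Points of $Z'\cap X$ lie in $\partial^{(n)}X$ by the partition $cl(\partial^{(k)}X)=\partial^{(k)}X\sqcup\partial^{(k+1)}X$. Points of $Z'\setminus X$ lie in $\partial^{(n+1)}X$. Hence $X\cap Z'=\partial^{(n)}X\cap Z'$, and $X$ is dense and codense in $Z'$. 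For $n$ odd the roles swap: $X\cap Z'=\partial^{(n+1)}X\cap Z'$, which is again dense and codense. So $Z=Z'$ works and (2) holds.

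Otherwise, $Y_{n+1}$ is nowhere dense in $Y_n$ for every $n$. This is because $Y_{n+1}$ is closed, so having empty interior in $Y_n$ is the same as being nowhere dense there. I would also note that $Y_{m}$ is nowhere dense in $Y_n$ for all $n<m$, since $Y_m\subseteq Y_{n+1}$. The family $\{Y_n : n<\omega\}$ then satisfies the hypotheses of Lemma~\ref{lem:closed-chain} with $\kappa=\omega$. So $Y_0=cl(X)$ admits an inp-pattern of depth $\omega$.

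The main obstacle is that strongness is assumed for $X$, not for $cl(X)$. I would handle this by inspecting the proof of Lemma~\ref{lem:closed-chain}. Shifting indices, I would apply it to the chain $Y_1\supseteq Y_2\supseteq\cdots$ and work inside $Y_0$. The witnesses built there are nonempty open sets meeting $Y_{n-1}$, and $Y_0=cl(X)$. Any open set meeting $cl(X)$ meets $X$. Therefore the consistent intersections in condition (2) of the inp-pattern are in fact consistent with $X$. This yields an inp-pattern of depth $\omega$ admitted by $X$, contradicting strongness.
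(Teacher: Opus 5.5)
Your proposal is correct and follows the paper's proof: the decreasing chain $cl(\partial^{(n)}X)$, Lemma~\ref{lem:closed-chain} plus strongness to find an $n$ where $cl(\partial^{(n+1)}X)$ has nonempty interior $Z$ in $cl(\partial^{(n)}X)$, then Lemma~\ref{lem:fr-interior}, and finally the parity observation that $X\cap cl(\partial^{(n)}X)$ equals $\partial^{(n)}X$ or $\partial^{(n+1)}X$. Your extra check that the inp-pattern built inside $cl(X)$ is actually admitted by $X$ fills in a point the paper passes over silently: the witnessing intersections have interior meeting $cl(X)$, hence meeting $X$.
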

\begin{proof}
Statements $(1)$ and $(2)$ are mutually exclusive by Lemma~\ref{lem:cons-noise0}. Let $X$ be a non-constructible definable set. Consider the decreasing family of nonempty (Fact~\ref{fct:constructible0}) definable closed sets of the form $cl(\partn{n}X)$. Using that $X$ is strong and applying Lemma~\ref{lem:closed-chain}, we get that there exists some $n$ such that $cl(\partn{n+1}X)$ has nonempty interior $Z$ in $cl(\partn{n}X)$. By Lemma~\ref{lem:fr-interior}, the set $\partn{n}X$ is then dense and codense in $Z$. Since the intersection $X\cap cl(\partn{n}X)$ is either $\partn{n}X$ or $\partn{n+1}X$,  we conclude that $X$ is dense and codense in $Z$.
\end{proof}

%By the above lemma it also holds that, in uniform structures, constructibility is definable in families. 

%Question: can the above lemma be improved by changing (2) to the property that $\partn{2n}Y=\partn{2n+2}Y \neq \emptyset$ for some $n$? 

\begin{theorem}\label{thm:strong-noise}
Let $\Mm$ be a strong uniform structure. Then $\Mm$ is strongly noiseless if and only if it is constructible. If $\Mm$ is an expansion of $(\R,<,+)$, then these are equivalent to $\Mm$ being noiseless.  
\end{theorem}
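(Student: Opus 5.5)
The first equivalence follows from Lemma~\ref{lem:strong-noise}. Constructible implies strongly noiseless is the chain of implications after Lemma~\ref{lem:cons-noise1}, which rests on Lemma~\ref{lem:cons-noise0}. For the converse, let $\Mm$ be strongly noiseless and strong, and let $X$ be any definable set. Since $\Mm$ is strong, $X$ admits no inp-pattern of depth $\omega$, so Lemma~\ref{lem:strong-noise} applies. If $X$ were not constructible, alternative (2) would give a definable $Z\subseteq cl(X)$ in which $X$ is dense and codense, contradicting strong noiselessness. Hence $X$ is constructible, and so is $\Mm$.

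For expansions of $(\R,<,+)$, the chain shows strongly noiseless implies noiseless. It remains to show that a noiseless strong expansion $\Mm$ of $(\R,<,+)$ is strongly noiseless. I would try to reduce to a Euclidean statement via dimension. Suppose a definable $Y$ is dense and codense in a definable nonempty $Z$. Among all such pairs, choose one with $d=\dim Z$ minimal and, subject to that, $Z$ in minimal ambient dimension.

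If $d=n$ (the ambient dimension), I would like to find an open box where $Z$ is dense, contradicting noiselessness. This does not follow for arbitrary $Z$, so I would first use Lemma~\ref{lem:closed-chain}. Applied to the chain $cl(\partn{k}Z)$ in a strong structure, it gives some $k$ such that $cl(\partn{k+1}Z)$ has nonempty interior in $cl(\partn{k}Z)$. By Lemma~\ref{lem:fr-interior}, this yields a set with interior in which a frontier is dense and codense. One then has to check that $Y$ restricted there is still dense and codense.

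If $d<n$, I would choose a coordinate projection $\pi$ onto $\R^d$ with $\pi(Z)$ having interior. I would then try to transfer the noise to $\R^d$ using a definable set such as $\{t : Y\cap Z_t \text{ is dense and codense in } Z_t\}$ together with the fiber sets. This uses known results for expansions of $(\R,<,+)$: noiseless structures that define no Cantor set are strongly noiseless (Hieronymi--Nell--Walsberg, cf.\ \cite[Theorem 8.1]{Wal22}). So the plan is to show that a strong noiseless expansion cannot define a Cantor set, meaning a compact nowhere dense set without isolated points.

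Suppose $K\subseteq\R$ were a definable Cantor set. Let $E\subseteq K$ be the set of left endpoints of complementary intervals of $K$. Then $E$ is dense and codense in $K$, but $K$ is not open, so noiselessness alone gives no contradiction. I would instead use strength. The sets $K$, $E$, the endpoints of the complementary gaps of $E$, and so on, should form, via a definable rescaling inside $K$ (a function $g$ taking each gap to its length), an infinite chain of nowhere dense subsets. By Lemma~\ref{lem:closed-chain} this gives an inp-pattern of depth $\omega$, contradicting strength. Alternatively, one could quote the known fact that strong expansions of $(\R,<,+)$ define no Cantor set \cite{Wal22}.

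The main obstacle is this last step, that a strong structure defines no Cantor set. If a direct argument via Lemma~\ref{lem:closed-chain} fails, I would cite Walsberg's result for it.
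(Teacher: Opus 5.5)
Your final argument is the paper's. The first equivalence comes from Lemma~\ref{lem:strong-noise}. For expansions of $(\R,<,+)$ you combine \cite[Theorem 8.1]{Wal22} (strongly noiseless iff noiseless and no definable Cantor set) with the fact that a strong expansion defines no Cantor set. The paper gets that fact from \cite[Theorem B]{HW18}, rather than from \cite{Wal22}: a definable Cantor set yields a copy of $(\mathcal{P}(\N),\N,\in,+1)$, which is not strong.

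The dimension induction and the direct closed-chain attempt are unnecessary. The closed-chain attempt would also fail as sketched, because the endpoint set $E$ is dense in $K$ rather than nowhere dense in it, so $K\supseteq E\supseteq\cdots$ is not a chain as required by Lemma~\ref{lem:closed-chain}.
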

\begin{proof}
The equivalence between strong noiselessness and constructibility follows directly from Lemma~\ref{lem:strong-noise}. For the last sentence, Walsberg proves in~\cite[Theorem 8.1]{Wal22} that an expansion $\Mm$ of the real ordered group is strongly noiseless if and only if it is noiseless and it does not definable a Cantor set, however, by~\cite[Theorem B]{HW18}, if $\Mm$ defines a Cantor set then it defines a copy of the two sorted structure $(\mathcal{P}(\N), \N,\in, +1)$, and in particular $\Mm$ cannot be strong. 
\end{proof}
 
In general, if a topological structure does not have constructible open core, then there exists a definable constructible set $X$ and some projection $\pi(X)$ such that $\partn{n}(\pi(X)) \neq \emptyset$ for every $n$. From this it can be derived that a theory has constructible open core if and only if any (every) $\omega_1$-saturated model has constructible open core. Notwithstanding Remark~\ref{rem:T-open-core}, it is open whether having constructible open core is closed under elementary equivalence within $\NTP$ uniform structures. We provide a positive answer for strong structures. 

\begin{proposition}\label{prop:cons-OC}
Let $\Mm$ be a strong uniform structure. If $\Nn\equiv \Mm$ and $\Mm$ has constructible open core, then $\Nn$ has constructible open core. 
\end{proposition}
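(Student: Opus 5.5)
The plan is to argue by contraposition and transfer a first-order witness of non-constructibility from $\Nn$ back to $\Mm$. Suppose $\Nn\equiv\Mm$ and $\No$ is not constructible. As in the proof of Theorem~\ref{thm:main-BP}, quantifier elimination in $\No$ reduces to eliminating one existential quantifier. So there is a constructible set $Y\subseteq N^{k}$ definable in $\Nn$, and a coordinate projection $\pi$, such that $\pi(Y)$ is not constructible. We then look for a \emph{definable} witness of this failure that can be written down by a single sentence.

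First, by Fact~\ref{fct:constructible1}, we fix finitely many formulas $\psi_1(x,y),\dots,\psi_r(x,y)$ and a Boolean combination $\beta$ of them, together with parameters $c$, such that:
\begin{enumerate}
    \item each $\psi_i(N,c)$ is closed;
    \item $Y=\beta(N,c)$.
\end{enumerate}
Second, since $\Nn\equiv\Mm$ and being strong is a property of the theory, $\Nn$ is strong, so in particular $\pi(Y)$ is strong. Lemma~\ref{lem:strong-noise} then gives a definable set $Z=\theta(N,d)\neq\emptyset$ such that $\pi(Y)$ is dense and codense in $Z$. This is the essential use of strongness. For a merely $\NTP$ structure, Lemma~\ref{lem:cons-noise1} only supplies a type-definable $Z$, and such a witness does not transfer by a single sentence.

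Third, we write a sentence $\sigma$ saying that there exist $y,z$ such that:
\begin{enumerate}
    \item each $\psi_i(x,y)$ defines a closed set;
    \item $\theta(x,z)$ defines a nonempty set;
    \item $\exists w\,\beta(x,w,y)$, i.e.\ the projection of $\beta(\cdot,y)$, is dense and codense in $\theta(\cdot,z)$.
\end{enumerate}
Closedness, density and codensity are first-order expressible in the language of $\Mm$. This uses the fixed formula defining the basis $\BU$ of the uniformity, together with its product basis on $M^{n}$, which has the same defining formula in every model of the theory. For example, ``$A$ is closed'' becomes: for every $a\notin A$ there is $U\in\BU$ with $U[a]\cap A=\emptyset$. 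Density and codensity are expressed similarly. Then $\Nn\models\sigma$, hence $\Mm\models\sigma$.

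Fourth, we read $\sigma$ in $\Mm$. Its witnesses give a set $\beta(M,c')$ that is a Boolean combination of definable closed sets, hence constructible and definable in $\Mo$. Its projection is dense and codense in a nonempty set, so by Lemma~\ref{lem:cons-noise0} the projection is not constructible. But the projection is definable in $\Mo$, so this contradicts constructibility of $\Mo$.

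The main obstacle is the second step, namely producing a \emph{definable} $Z$, and this is exactly what Lemma~\ref{lem:strong-noise} (via Lemma~\ref{lem:closed-chain}) provides in strong structures. The only remaining care is in the third step: fixing the formulas $\psi_i$, $\beta$ and $\theta$ before quantifying over their parameters, so that the whole witness is captured by a single sentence.
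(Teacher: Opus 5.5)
Your proposal is correct and is essentially the paper's proof. You pass to a constructible $\Nn$-definable set with non-constructible projection, use strongness via Lemma~\ref{lem:strong-noise} to get a \emph{definable} nonempty $Z$ in which the projection is dense and codense, and transfer this configuration to $\Mm$ by a single sentence; the only cosmetic difference is that you express constructibility via Fact~\ref{fct:constructible1} as a Boolean combination of formulas defining closed sets, whereas the paper uses the first-order condition $\partn{n}X=\emptyset$ from Fact~\ref{fct:constructible0}.
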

\begin{proof}
Suppose that $\Nn$ does not have constructible open core. Then there exists a constructible definable set $X$ and a projection $Y=\pi(X)$ of $X$ that is not constructible. By Lemma~\ref{lem:strong-noise}, there exists a definable set $Z$ such that $Y$ is dense and codense in $Z$. Let $n$ be such that $\partn{n}X =\emptyset$ (see Fact~\ref{fct:constructible0}). Let $\varphi(x,y)$ and $\psi(z,w)$ be formulas such that $X=\varphi(N,b)$ and $Z=\psi(N,c)$, for some parameters $b$ and $c$. The theory of $\Nn$ contains the formula stating that there exists some $y$ and $w$ such that $\varphi(x,y)$ defines a set $X'$ with $\partn{n}X'=\emptyset$ and $\psi(z,w)$ defines a set $Z'$ such that $Y'=\pi(X')$ is dense and codense in $Z'$. It follows that every model of $Th(\Nn)$ fails to have constructible open core.   
\end{proof}

\begin{question}
Let $\Mm$ be an $\NTP$ uniform structure whose open core $\Mo$ is constructible. If $\Nn\equiv \Mm$, is $\No$ constructible? 
\end{question}

We now begin the proof of Theorem~\ref{thm:strong-intro}, which we divide into Proposition~\ref{prop:local-o-min} and Corollary~\ref{cor:wal}. We will need some facts from~\cite{DG17, Wal22}. 

\begin{fact}[\cite{DG17}, Corollary 2.13] \label{fct:DG-acc-point}
Let $\Mm$ be a monster model of a strong definably complete theory expanding densely ordered abelian groups. Then:
\begin{enumerate}
    \item Every unary definable discrete set has no accumulation points. 
    \item Every unary definable nowhere dense set is discrete. 
\end{enumerate}
\end{fact}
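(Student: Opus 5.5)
The plan is to reduce everything to Lemma~\ref{lem:closed-chain}. Since $\Mm$ is strong, Lemma~\ref{lem:closed-chain} forbids an infinite strictly descending chain $X^{(0)}\supseteq X^{(1)}\supseteq\cdots$ of nonempty definable sets, each nowhere dense in the previous one. Both items will follow by producing such a chain from a hypothetical counterexample. I would prove (1) first and derive (2) from it.

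For (1), suppose $D\subseteq M$ is definable, discrete, and has an accumulation point. Let $D'=cl(D)\setminus D$, which is the set of accumulation points of $D$ outside $D$. Since $D$ is discrete it is open in $cl(D)$, so $D'$ is closed and nowhere dense in $cl(D)$, and $D'\neq\emptyset$. I would then iterate by a Cantor--Bendixson type derivative: let $D^{(1)}$ be the isolated points of $D'$, and let $D^{(k+1)}$ be the set of accumulation points of $D^{(k)}$ in $cl(D^{(k)})\setminus D^{(k)}$. The closures of these sets form a descending chain, each term nowhere dense in the previous one. The key point to establish is that the chain never becomes empty. The tool for this is the group structure together with saturation: if $a$ is an accumulation point of $D$, approached say from the right, then the sets $D\cap(a,a+\varepsilon)$ are uniformly definable. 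Translating and rescaling is not available without multiplication, but addition lets me form sums $d_1+d_2$ of elements chosen at successively smaller scales. By definable completeness, the infimum of $D\cap(d,\infty)$ exists, so each $d\in D$ near $a$ has a well-defined ``gap'' to the next point; these gaps tend to $0$ as $d\to a$. Using these gaps, the set $D+D$, restricted near $2a$, should again be discrete on a dense open piece and have accumulation points of one higher ``rank''. Iterating finite sums $D+\cdots+D$ then gives a sequence of definable sets whose closures form a chain as in Lemma~\ref{lem:closed-chain} of every finite length. By compactness in the monster model, this yields an inp-pattern of depth $\omega$, which contradicts strength.

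For (2), let $X\subseteq M$ be definable and nowhere dense, and suppose it is not discrete. Let $Y$ be the set of isolated points of $X$. If $Y$ is not dense in $X$, then on some open interval $X$ has no isolated points. Taking the definable sets $C_0$ and $C_1$ of left and right endpoints of complementary gaps, as in the proof of Lemma~\ref{lem:generic-local-omin}, gives a definable set dense and codense in $cl(X)$ near that interval. I would then obtain an inp-pattern via Lemma~\ref{lem:noise-NTP2}, or via Lemma~\ref{lem:strong-noise} together with the chain argument, contradicting strength. If instead $Y$ is dense in $X$, then $Y$ is a definable discrete set. Since $X$ is not discrete, some point of $X$ is not isolated in $X$, and that point is an accumulation point of $Y$, which contradicts (1).

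The main obstacle is the step in (1) showing that the derived sets stay nonempty forever, that is, manufacturing accumulation points of every finite rank from a single accumulation point using only addition and definable completeness. I would try first the iterated sums $D+\cdots+D$ near $na$, checking carefully that each new level is nowhere dense in the previous one. If that fails, the fallback is to build the inp-pattern directly: row $i$ would consist of the formulas ``$x$ lies in a small ball around $d_{i,j}+(\text{earlier coordinates})$'' for points $d_{i,j}\in D$ at mutually separated scales $\varepsilon_i\gg\varepsilon_{i+1}$. Pairwise disjointness within a row would come from discreteness at that scale, and consistency of each path from the accumulation at $a$.
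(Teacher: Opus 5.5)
The paper does not prove this statement; it is quoted from \cite{DG17}. Judged on its own, your argument for (1) has a genuine gap, exactly at the step you flag as the main obstacle.

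The Cantor--Bendixson chain of $D$ by itself can stop at once: for $D=\{2^{-n}:n\in\N\}$ it is just $cl(D)\supseteq\{0\}$. Its replacement by iterated sums is only asserted. After translating $a$ to $0$, you would need the closure of the $k$-fold sums of $D\cup\{0\}$ to be nowhere dense in the closure of the $(k+1)$-fold sums. That requires control over coincidences among sums, for instance a dense set of $k$-fold sums having one-sided gaps among the $k$-fold sums. Discreteness of $D$ alone does not give this.

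The compactness step is also invalid as stated. Chains of each finite length built from different sets yield patterns whose row formulas change with the length, and compactness cannot merge them. Strength only excludes a single pattern with infinitely many rows. If the sums did form one nested chain with the required nowhere density, Lemma~\ref{lem:closed-chain} would apply directly, but that is precisely what is unproved.

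Your fallback is not an inp-pattern either. The parameters of row $i$ may not depend on the path, so ``a ball around $d_{i,j}$ plus the earlier coordinates'' is not a legitimate row.

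The missing idea is that the earlier coordinates can be read off definably from $x$.
\begin{enumerate}
\item Translate and reflect so that $D\cap(0,e)$ is infinite for every $e>0$.
\item For $d\in D$ let $g(d)=\inf(D\cap(d,\infty))-d$. This is positive (possibly $+\infty$) because $d$ is isolated in $D$.
\item Let $\rho(x)=\sup(D\cap(-\infty,x])$, which exists by definable completeness. Put $\delta_1=\rho$ and $\delta_{i+1}(x)=\rho(x-\delta_1(x)-\cdots-\delta_i(x))$.
\item If $b_1,\ldots,b_n\in D\cap(0,\infty)$ satisfy $b_{i+1}+\cdots+b_n<g(b_i)$ for every $i$, then $\delta_i(b_1+\cdots+b_n)=b_i$ for every $i$.
\item Choose the $m$ distinct parameters of row $i$ inside $D\cap(0,e_i)$, where $ne_i<g(b_{k,j})$ for all $k<i$ and $j\leq m$. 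Then every path is realized by the corresponding sum, and each row $\{\delta_i(x)=b_{i,j}:j\leq m\}$ is $2$-inconsistent.
\item Restricting these patterns on rows $1,\ldots,N$ to arbitrary finite sets of rows gives an inp-pattern of depth $\omega$, contradicting strength.
\end{enumerate}

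For (2), your second case is correct. In the first case, however, a definable set dense and codense in $cl(X)\cap I$ is not contradictory by itself, since $(\R,<,+,\Q)$ is strong. So the route through Lemma~\ref{lem:strong-noise} fails.

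Lemma~\ref{lem:noise-NTP2} does work there. Take $\Cc$ to be the family, indexed by $\varepsilon$, of the sets of points of $cl(X)\cap I$ that are left endpoints of complementary intervals of length at least $\varepsilon$. Each such set is closed and discrete, hence codense in $cl(X)\cap I$. For small $\varepsilon$ it meets any finitely many nonempty open pieces.

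More simply, take the midpoints of the bounded complementary intervals of $cl(X)$, which exist by Fact~\ref{fct:DC-doag}. They form a definable discrete set accumulating at every non-isolated point of $X$. This reduces (2) to (1) with no case distinction.
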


A linearly ordered structure $(M,<,\ldots)$ is \emph{locally o-minimal} if, for every definable set $X\subseteq M$ and every $a\in X$, there exists an interval $I$ containing $a$ such that $I\cap X$ is either $\{x\}$ or an interval. 

%In the next proposition discrete case is implicit and trivial. 
\begin{proposition}\label{prop:local-o-min}
Let $\Mm$ be a strong definably complete expansion of an ordered group. Then the open core $\Mo$ is locally o-minimal.
\end{proposition}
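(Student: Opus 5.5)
We have to show that every unary set $X$ definable in $\Mo$ is, locally around each of its points, either a singleton or an interval.

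\textbf{Reduction.} Strong theories are $\NTP$, so Theorem~\ref{thm:main-intro} gives that $\Mo$ is constructible. Strength, definable completeness and the group structure all transfer to elementary extensions. Local o-minimality of $\Mo$ is expressed by first-order sentences, one for each formula of the open-core language. Hence we may pass to a monster model $\Mm$ of $Th(\Mm)$. Since $\Mo$ is a reduct of $\Mm$, it is also strong. The discrete case is trivial, so by Fact~\ref{fct:DC-doag} we assume $M$ is a divisible densely ordered abelian group. Fix $X\subseteq M$ definable in $\Mo$ and a point $a\in M$.

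\textbf{Splitting $X$.} Because $X$ is constructible, $\partial X$ has empty interior: otherwise $X$ would be dense and codense in an open set, contradicting Lemma~\ref{lem:cons-noise0}. So $\partial X$ is a definable nowhere dense unary set. By Fact~\ref{fct:DG-acc-point}(2) it is discrete, and by Fact~\ref{fct:DG-acc-point}(1) it has no accumulation points. The same argument applies to the boundary of $int(X)$ and to the set of isolated points of $X$. The latter is discrete by definition, so it too has no accumulation points.

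Now let $N = X\setminus int(X)$. Its interior is empty, since any open subset of $N$ lies in $int(X)$. So $N$ is nowhere dense, hence discrete with no accumulation points. The same holds for $bd(int(X)) = cl(int(X))\setminus int(X)$, which has empty interior.

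\textbf{Local picture.} Let $D = N\cup bd(int(X))\cup\{a\}$. This is a finite union of sets without accumulation points, together with one point, so $a$ is not an accumulation point of $D$. Choose an open interval $I\ni a$ with $I\cap D=\{a\}$. On each component of $I\setminus\{a\}$:
\begin{itemize}
\item the points of $X$ all lie in $int(X)$, because $I\cap N\subseteq\{a\}$;
\item $int(X)$ has no boundary points.
\end{itemize}
By definable completeness an interval is definably connected. So $int(X)$ either contains the whole component or misses it.

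Therefore $I\cap X$ is a union of some of the two half-intervals $(c,a)$ and $(a,d)$, possibly together with $\{a\}$. Shrinking $I$ does not change this description. The cases are:
\begin{itemize}
\item if $a\in X$ and both halves are missed, $I\cap X=\{a\}$;
\item if $a\in X$ and at least one half is included, $I\cap X$ is an interval containing $a$;
\item if $a\notin X$, then $I\cap X$ is empty, a half-interval, or $I\setminus\{a\}$, which is the union of two intervals.
\end{itemize}
For $a\notin X$ this is the standard consequence of the definition and is harmless for the stated property, which only concerns points of $X$.

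\textbf{Main obstacle.} The delicate step is justifying that Fact~\ref{fct:DG-acc-point} applies to sets definable in $\Mo$. The fact needs a monster model of a strong definably complete theory. Definable completeness of $\Mo$ is inherited from $\Mm$ because every $\Mo$-definable set is $\Mm$-definable. Strength passes to reducts. Working throughout in a monster model, as in the reduction step, handles the saturation requirement.

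One could instead apply Fact~\ref{fct:DG-acc-point} directly in $\Mm$, since every set definable in $\Mo$ is definable in $\Mm$. This gives the same conclusion without any concern about reducts, and I would take that route.
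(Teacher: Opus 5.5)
Your proof is correct and is essentially the paper's argument. Constructibility of the open core (Theorem~\ref{thm:main-intro}) forces $cl(X)\setminus int(X)$, which contains both $cl(N)$ and $cl(int(X))\setminus int(X)$, to have empty interior; this is what actually justifies your passage from ``$N$ has empty interior'' to ``$N$ is nowhere dense''. Fact~\ref{fct:DG-acc-point} in a monster model then makes these sets discrete with no accumulation points. The only difference is the last step: the paper argues by contradiction, using $z=\inf([x,y]\setminus X)$ to find boundary points in every interval around $a$, while you argue directly via definable connectedness of the two half-intervals of $I\setminus\{a\}$, which is the same definable-completeness argument in another form.
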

\begin{proof} 
Let $X\subseteq M$ be a set definable in $\Mo$ and $a\in X$ be a point such that, for every interval $I$ containing $a$, the intersection $I\cap X$ is not finite nor an interval. By Theorem~\ref{thm:main-intro}, $\Mo$ is constructible, and in particular noiseless. It follows that $Y=cl(X)\setminus int(X)$ has empty interior. We may assume that $\Mm$ is a monster model. Hence, by Fact~\ref{fct:DG-acc-point}, $Y$ is discrete. We show that $a$ is an accumulation point of $Y$, from where it follows, by Fact~\ref{fct:DG-acc-point}, that $\Mm$ is not strong.

Let us fix an interval $I\ni a$. Observe that either $I \cap X \cap (a,+\infty)$ or $I \cap X \cap (-\infty, a)$ must not be finite nor an interval. We assume the former, being the latter case analogous. Hence there exist $x, y\in X\cap I$, with $a<x<y$, such that the interval $[x,y]$ is not a subset of $X$. By definable completeness, let $z =\inf \, \left( [x,y]\setminus X \right)$. Then $z \in Y \cap I$, completing the proof.         
\end{proof}

The following fact can be extracted from~\cite{Wal22} (see the proof of Theorem 9.15), where it is shown for strongly dependent structures but using results from~\cite{DG17} for strong structures. 

\begin{fact}[\cite{Wal22}]\label{fct:strong-expansion}
Let $\Mm$ be an expansion of $(\R,<,+)$. The following are equivalent. 
\begin{enumerate} 
    \item $\Mm$ is strong and noiseless.
    \item $\Mm$ is either o-minimal or locally o-minimal and interdefinable with $(\R,<,+,\Bb, \alpha\Z)$, for some $\alpha>0$, where $\Bb$ is a collection of bounded sets satisfying that $(\R,<,+,\Bb)$ is o-minimal.
\end{enumerate}
\end{fact}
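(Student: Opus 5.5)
We plan to prove the two implications separately, leaning on the results of this section for $(1)\Rightarrow(2)$ and on classical tame-topology facts for $(2)\Rightarrow(1)$.

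For $(2)\Rightarrow(1)$, the o-minimal case is standard: o-minimal expansions of groups are dp-minimal, hence strong, and they are clearly noiseless. For $\Mm=(\R,<,+,\Bb,\alpha\Z)$, noiselessness comes from local o-minimality. Locally o-minimal expansions of $(\R,<,+)$ are d-minimal, so they are constructible by \cite[Theorem 3.2]{miller05-tame} (as in the proof of Corollary~\ref{cor:shelah-exp}), hence noiseless. For strength, we would decompose $\R$ definably as $[0,\alpha)\times\alpha\Z$ via $x\mapsto(x-\alpha\lfloor x/\alpha\rfloor,\alpha\lfloor x/\alpha\rfloor)$. We would then check, via a quantifier-reduction argument, that every definable set is a finite Boolean combination of sets coming from the o-minimal structure $([0,\alpha),\Bb,+ \bmod \alpha)$ and from Presburger arithmetic on $\alpha\Z$. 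Both of these are dp-minimal, so $\Mm$ has dp-rank at most $2$ and in particular is strong.

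For $(1)\Rightarrow(2)$, assume $\Mm$ is strong and noiseless. By Theorem~\ref{thm:strong-noise}, $\Mm$ is constructible, so $\Mm=\Mo$. Proposition~\ref{prop:local-o-min} then gives that $\Mm$ is locally o-minimal. Suppose $\Mm$ is not o-minimal. By \cite[Proposition 2.14]{DMS10} and \cite[Theorem B]{hie-discrete}, $\Mm$ defines an infinite closed discrete set $D\subseteq\R^{>0}$, necessarily unbounded by local o-minimality and compactness of closed intervals. Fact~\ref{fct:DG-acc-point}, applied in a monster model, says that no definable discrete set has accumulation points even nonstandardly. We would use this on the definable gap function $d\mapsto s(d)-d$, where $s(d)$ is the successor of $d$ in $D$, to show that this function takes only finitely many values beyond some point; otherwise strongness fails through an inp-pattern built from sets of differences. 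From eventual periodicity of $D$ we then extract a definable set $\alpha\Z$. Next, by local o-minimality and compactness of $[0,\alpha]$, every definable bounded unary set is a finite union of points and intervals. Collecting all bounded definable sets into $\Bb$ therefore gives an o-minimal structure $(\R,<,+,\Bb)$, by a standard argument about definable sets in bounded boxes.

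The main obstacle is the final interdefinability: showing that $\Mm$ defines nothing beyond $(\R,<,+,\Bb,\alpha\Z)$. Concretely, we must show that the structure induced on $\alpha\Z$ is a strong expansion of $(\Z,<,+)$ that is interdefinable with Presburger arithmetic, and that there is no definable interaction between the bounded part and the discrete part beyond what the decomposition gives. For the first point we would use strongness to rule out proper expansions of $(\Z,<,+)$ that define sparse or non-periodic sets, arguing via an inp-pattern of unbounded depth built from such sets. For the second point, local o-minimality of definable families should let us show that each definable set, restricted to translates $[k\alpha,(k+1)\alpha)$, is uniformly given by finitely many $\Bb$-definable pieces indexed by Presburger-definable conditions on $k$.
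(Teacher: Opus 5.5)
Your reduction at the start of $(1)\Rightarrow(2)$ is sound. Theorem~\ref{thm:strong-noise} makes $\Mm$ constructible, hence interdefinable with $\Mo$. Proposition~\ref{prop:local-o-min} then makes $\Mm$ locally o-minimal, and producing an unbounded closed discrete $D$ is fine.

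Fact~\ref{fct:DG-acc-point} is not what controls the gaps of $D$. Local o-minimality already forces the countable set of gap values to be closed, discrete and bounded away from $0$. Unbounded gaps must be excluded by a separate inp-pattern, for example iterated greedy remainders $x\mapsto x-\max(D\cap(-\infty,x])$ taken at elements with huge gaps and widely separated scales. That step is doable, but you only gesture at it.

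The genuine gap is the next sentence: finitely many gap values does not give eventual periodicity. Take a closed discrete set whose consecutive gaps are $1$ and $\sqrt2$ (or $1$ and $2$), arranged along a non-eventually-periodic sequence. It has every property you have derived for $D$, yet it has no period, so nothing lets you extract $\alpha\Z$. Excluding such sets needs a further, substantive use of strength, and the proposal does not supply it.

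The rest of $(1)\Rightarrow(2)$, which you call the main obstacle, is left as a plan. It is exactly the content of Walsberg's classification (proof of Theorem 9.15 in \cite{Wal22}, built on \cite{DG17}), which the paper cites rather than reproves.

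\begin{itemize}
\item You cannot build an inp-pattern directly from an arbitrary non-periodic subset of $\Z$. To show that a strong expansion of $(\Z,<,+)$ defines only Presburger sets, you first need a Cobham--Semenov/Michaux--Villemaire-type reduction that produces a definable set whose gaps tend to infinity.
\item The ``no interaction'' claim requires showing that every definable family of subsets of $[0,\alpha)^n$ indexed by $(\alpha\Z)^n$ has only finitely many distinct members, with Presburger-definable level sets.
\item The o-minimality of $(\R,<,+,\Bb)$ does not follow from bounded unary definable sets being finite unions of points and intervals. It concerns unbounded sets definable from $\Bb$, which your ``standard argument'' does not address.
\end{itemize}

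In $(2)\Rightarrow(1)$, the decomposition of definable sets into an o-minimal part on $[0,\alpha)$ and a Presburger part on $\alpha\Z$ is also asserted rather than proved; the paper gets the dp-rank bound from \cite{wal22-preprint}. As it stands, your proposal reduces the statement to the same hard core that the cited result handles, but does not prove that core.
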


The next corollary was proved independently by Hieronymi and Walsberg in unpublished work. 

\begin{corollary}\label{cor:wal}
Let $\Mm$ be an expansion of $(\R,<,+)$. The following are equivalent. 
\begin{enumerate}
    \item $\Mm$ is a strong expansion by constructible sets. 
    \item $\Mm$ is constructible and has dp-rank at most $2$.
    \item $\Mm$ is either o-minimal or locally o-minimal and interdefinable with $(\R,<,+,\Bb, \alpha\Z)$, for some $\alpha>0$, where $\Bb$ is a collection of bounded sets satisfying that $(\R,<,+,\Bb)$ is o-minimal.
\end{enumerate}
\end{corollary}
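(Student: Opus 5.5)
We will prove the cycle $(1)\Rightarrow(3)\Rightarrow(2)\Rightarrow(1)$, relying mainly on Fact~\ref{fct:strong-expansion} and Theorem~\ref{thm:strong-noise}.

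\emph{$(1)\Rightarrow(3)$.} Suppose $\Mm$ is a strong expansion of $(\R,<,+)$ by constructible sets. Then $\Mm$ is interdefinable with its open core $\Mo$. Since strong structures are $\NTP$, Theorem~\ref{thm:main-intro} (via Proposition~\ref{prop:BCT} and Corollary~\ref{cor:real-sigma-comp}) gives that $\Mo$, and hence $\Mm$, is constructible. Constructible structures are in particular noiseless. So $\Mm$ is strong and noiseless, and Fact~\ref{fct:strong-expansion} yields the dichotomy in $(3)$.

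\emph{$(3)\Rightarrow(2)$.} If $\Mm$ is o-minimal, it is constructible by the classical cell decomposition (or by Corollary~\ref{cor:shelah-exp}(1)), and it is dp-minimal, so its dp-rank is at most $1$. Otherwise $\Mm$ is interdefinable with $(\R,<,+,\Bb,\alpha\Z)$. By Fact~\ref{fct:strong-expansion}, this structure is strong and noiseless. By Theorem~\ref{thm:strong-noise}, it is then constructible.

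It remains to bound the dp-rank by $2$. The plan is to use the standard decomposition of such a structure: it is interpretable in (a dp-sum of) the o-minimal structure on $[0,\alpha)$ induced by $\Bb$, together with $(\Z,+,<)$. The key map is $x\mapsto(\lfloor x/\alpha\rfloor, x-\alpha\lfloor x/\alpha\rfloor)$. Every definable set is then a finite union of sets given by Presburger-definable conditions on the integer part and o-minimal conditions on the fractional part, uniformly. Since dp-rank is subadditive and both factors are dp-minimal, the dp-rank is at most $2$. This is the step I expect to be the main technical obstacle. I would try to cite it from Walsberg~\cite{Wal22}, where the dp-rank of these structures is computed; the proof of Fact~\ref{fct:strong-expansion} there already contains this analysis.

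\emph{$(2)\Rightarrow(1)$.} Finite dp-rank implies strong dependence (indeed $\NIP$ with finite burden), and hence strong. A constructible structure is interdefinable with its open core, so $\Mm$ is an expansion by constructible sets (namely its definable closed sets). This closes the cycle.
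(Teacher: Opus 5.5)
Your proof is correct and follows the paper's argument. You prove $(1)\Rightarrow(3)$ via Theorem~\ref{thm:main-intro} and Fact~\ref{fct:strong-expansion}, and $(2)\Rightarrow(1)$ is immediate; for the dp-rank bound in $(3)\Rightarrow(2)$ the paper simply cites the proof of Corollary 11.10 in \cite{wal22-preprint}, and your sketch (interpret the structure in $(\Z,+,<)$ and the induced o-minimal structure on $[0,\alpha)$, then use subadditivity of dp-rank) is a valid way to see it. The only cosmetic difference is that you get constructibility of $(\R,<,+,\Bb,\alpha\Z)$ from Fact~\ref{fct:strong-expansion} plus Theorem~\ref{thm:strong-noise}, whereas the paper invokes Theorem~\ref{thm:main-intro} or the local o-minimality literature.
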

\begin{proof}
We first observe that $(3)\Rightarrow(2)$. It is well known that o-minimal structures have dp-rank $1$ and are constructible. In the proof of \cite[Corollary 11.10]{wal22-preprint}, it is shown that the locally o-minimal structure $(\R,<,+,\Bb, \alpha\Z)$ described in $(3)$ has dp-rank $2$. By either Theorem~\ref{thm:main-intro} or the literature on local o-minimality, it is also constructible. 

$(2)\Rightarrow(1)$ is obvious. For $(1)\Rightarrow (3)$, if $\Mm$ is a strong expansion by constructible sets then, by Theorem~\ref{thm:main-intro}, it is constructible, and in particular noiseless. Hence $(3)$ is given by Fact~\ref{fct:strong-expansion}. 
\end{proof}

%QUESTION: In strong definably complete expansions $\Mm$ of ordered groups $(M,<,+)$ (they are locally o-minimal), could it be the case that the $\Mm$ is always $(M,<,+,S)$-minimal, where $S$ is ANY infinite discrete definable set? 

The structures $(\R,+,<,\Q)$ and $(\R,+,<,\Z)$ both have dp-rank $2$ (see Section 3.1 in \cite{DG17}). If we drop the constructibility assumption in Corollary~\ref{cor:wal} and impose a bound of $2$ on the dp-rank, then these structures exemplify the only two possibilities described by the next proposition.  

% For the above results see Lemma 3.5 and the comment at the end of Section 3.1 in \cite{DG17}.

%They also have o-minimal and $d$-minimal open core respectively

%In~\cite[Proposition 3.14]{DG17} it is shown that $(R,+,<,\R)$, where $R$ is the universe of an elementary extension of the real ordered group, has dp-rank $2$. 

\begin{proposition}\label{prop:dprk2}
Let $\Rr$ be an expansion of $(\R,<,+)$ of dp-rank at most $2$. Exactly one of the following holds. 
\begin{enumerate}
    \item The open core $\Ro$ is o-minimal. 
    \item $\Rr$ is interdefinable with $(\R,+,<,\Bb, \alpha\Z)$, for some $\alpha\in \R^{>0}$ and some collection $\Bb$ of bounded sets such that $(\R, +, <,\Bb)$ is o-minimal. In particular $\Rr$ is interdefinable with its open core.
\end{enumerate}
\end{proposition}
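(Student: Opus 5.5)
Since dp-rank at most $2$ implies strong (indeed strongly dependent), the plan is to split on whether $\Ro$ is o-minimal. If it is, we are in case (1). Mutual exclusivity is immediate: in case (2) the set $\alpha\Z$ is definable in $\Ro$ (it is closed), and it is infinite and discrete, so $\Ro$ is not o-minimal. So the work is to show that if $\Ro$ is not o-minimal, then $\Rr$ itself is as in (2).

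Assume $\Ro$ is not o-minimal. By Theorem~\ref{thm:main-intro} (or Corollary~\ref{cor:wal} applied to $\Ro$, which is a strong expansion of $(\R,<,+)$ by constructible sets), $\Ro$ is interdefinable with some $(\R,<,+,\Bb,\alpha\Z)$ of the form in Corollary~\ref{cor:wal}(3), and it has dp-rank $2$. In particular $\Rr$ defines the closed discrete set $\alpha\Z$.

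The key step is to show that $\Rr$ is noiseless. Then Fact~\ref{fct:strong-expansion} applies to $\Rr$ directly: $\Rr$ is strong and noiseless, hence o-minimal or of the form (2). It cannot be o-minimal, since $\Ro$ is not, so we land in (2), and interdefinability with the open core follows.

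To prove noiselessness, suppose $\Rr$ defines a set $D$ dense and codense in some open set. I would aim to find an inp-pattern of depth $3$, contradicting dp-rank at most $2$. The two ingredients are:
\begin{enumerate}
\item the discrete set $\alpha\Z$, which yields a pattern using the formulas $x\in(k\alpha,(k+1)\alpha)$ across ``integer parts'';
\item the dense codense set, which yields a pattern of depth $2$ locally in every interval, as in $(\R,<,+,\Q)$, via the order and membership/translates of $D$ (one row from the dense-codense noise and one from the order).
\end{enumerate}
Combining these should give depth $3$. Concretely, I would use the integer part $\lfloor x/\alpha\rfloor$ to choose which copy one is in, together with the fractional part, to which the local noise pattern is applied.

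The main obstacle is making this combination rigorous, since the noise need not be periodic. I would first reduce to $D\subseteq(0,\alpha)$ and take $D'=\{x: x-\alpha\lfloor x/\alpha\rfloor\in D\}$, which is definable and periodic. Then I would use the first row $\{x\in\alpha\cdot[k,k+1)\}$, with a subtle choice of which $D$-interval each row selects, arranging the rows independently. If that fails, the fallback is to cite the dp-rank $2$ analysis in~\cite{DG17,Wal22}, namely that noise plus an infinite discrete set forces dp-rank at least $3$.
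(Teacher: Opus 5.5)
Your reduction is sound and has the same skeleton as the paper's proof. Mutual exclusivity works as you say. Getting $\alpha\Z$ from Corollary~\ref{cor:wal} applied to the strong reduct $\Ro$ is a legitimate substitute for the paper's use of constructibility plus \cite[Lemma 2.5]{DMS10}. Once $\Rr$ is known to be noiseless, Fact~\ref{fct:strong-expansion} finishes exactly as you describe.

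The gap is the step that carries all the content: noiselessness of $\Rr$. The paper does not prove this directly. It obtains it from Dolich--Goodrick, \cite[Theorem 2.9]{DG-burden2}: an expansion of dp-rank at most $2$ that defines an infinite discrete set is noiseless. Your fallback states the right fact, but the reference you need is \cite[Theorem 2.9]{DG-burden2}. With that citation in place, your argument becomes the paper's.

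Your primary plan, a direct inp-pattern of depth $3$, does not work as described, and the failure is in ingredient (2). In $(\R,<,+,\Q)$ the noise row exists only because $\Q$ is a subgroup. Translates by representatives of distinct cosets are then pairwise disjoint, which gives the $k$-inconsistent row. For an arbitrary dense codense $D$, the translates $D-c_j$ are not $k$-inconsistent. Periodizing $D$ via $\lfloor x/\alpha\rfloor$ does nothing about this, since the obstacle is inconsistency, not periodicity.

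In fact, no argument using only the order, the group, and a dense codense set can produce even a depth-$2$ pattern. By the generic-predicate example in Section~\ref{sec:prelim}, an o-minimal group expanded by a generic predicate is inp-minimal, even though the predicate is dense and codense in every interval. Similarly, the remark following this proposition records a burden-$2$ definably complete structure, with the independence property, that defines both an infinite discrete set and a dense codense set.

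So combining the discrete set with the noise must use $\NIP$ essentially, that is, dp-rank rather than mere burden. Your sketch never uses it, and supplying exactly that is the content of the Dolich--Goodrick theorem.
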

\begin{proof}
By Theorem~\ref{thm:main-intro}, the open core $\Ro$ is constructible. If $\Rr$ does not define an infinite discrete set then, by Lemma 2.5 in~\cite{DMS10}, $\Ro$ is o-minimal. 
Suppose that $\Rr$ defines an infinite discrete set. By Theorem 2.9 in \cite{DG-burden2}, $\Rr$ is noiseless. Statement (2) then follows from Theorem D in~\cite{Wal22}.
\end{proof}

By \cite[Proposition 3.1]{DG17}, the structure $(\R, +, <, \Z, \Q)$ has dp-rank $3$, and so the dp-rank bound in Proposition~\ref{prop:dprk2} is tight. If $\Rr$ in the proposition is dp-minimal, then it is o-minimal by \cite[Corollary 3.7]{simon-dp-min-order}, and if it defines multiplication, then $\Ro$ is o-minimal by~\cite[Corollary 2.4.]{DG17}. Proposition~\ref{prop:dprk2} fails to generalize to structures of burden $2$. In~\cite[Section 3]{dolich-good22}, an expansion of a definably complete ordered group is constructed which has burden $2$, the independence property, and which defines both an infinite discrete set and a dense and codense set. 

\section{D-minimal structures}\label{sec:d-minimal}

%Usually derived set is defined taking all limit points in the space. Perfect if it is closed and has no isolated points. Cantor–Bendixson theorem states that \underline{closed} sets of a Polish space X can be written uniquely as the disjoint union of a perfect set and a countable set.

A definably complete expansion $\Rr$ of a dense linear order $(R,<)$ is \emph{d-minimal} (short for \emph{discrete-minimal}) if, for every definable family $\Cc$ of subsets of $R$, there exists some $n$ such that every set in $\Cc$ is the union of an open set and at most $n$ discrete sets\footnote{In some texts this definition is called ``strong d-minimality". Meanwhile, ``d-minimality" is reserved for the version of the definition where the $n$ is not necessarily fixed in definable families. In this sense, recent work of Farris and Hieronymi shows that d-minimal expansions of ordered fields are strongly d-minimal.}. This notion originated from the work of Miller~\cite{miller05-tame}. Known examples of $\NIP$ d-minimal structures include $(\R,<,+,\Z)$ and $(\R,+,\cdot,2^\Z)$.  

It is an open question whether every $\NTP$ --or even just $\NIP$-- expansion of $(\R,<,+)$ has d-minimal open core (Question~\ref{Q:d-minimal}). More generally, every constructible expansion of $(\R,<,+)$ known to the author is d-minimal. The role of d-minimality in characterizing neostable structures over the reals is thus conspicuously open. In this section we approach this subject by proving Theorem~\ref{thm:d-minimal-intro} (through Propositions~\ref{prop:shelah-d-min} and~\ref{prop:omega}). 

We fix some useful terminology. Given an expansion $\Rr$ of a linear order $(R,<)$ and a definable set $X\subseteq R$, let $X^{[n]}$ denote the $n$-th Cantor-Bendixson derivative of $X$. That is, $X^{[0]}=X$ and, for every $n$, the set $X^{[n+1]}$ denotes the non-isolated points in $X^{[n]}$. If $\Rr$ is $\omega_1$-saturated, we call $Z=\bigcap_{n<\omega} X^{[n]}$ the \emph{type-definable perfect component of $X$}\footnote{In spite of the chosen terminology, observe that the set $Z$ might not be closed.}. A straightforward application of saturation shows that $Z$ does not contain isolated points.  
%since we are not assuming that X is closed, we are changing slightly the definition of derived set that appears in some sources, which takes all limit points in the space, not just X. 

\begin{remark}\label{rem:discrete-partition}
Given a linear order $(R,<)$, it is well known that a set $X\subseteq R$ is a finite union of at most $n$ discrete sets iff $X^{[n]}=\emptyset$ (see 1.2 and 1.3 in~\cite{FM05-fast-sequences}). In particular, if $X^{[n]}=\emptyset$, then $X$ is partitioned by the discrete sets of the form $X^{[i]} \setminus X^{[i+1]}$, for $i < n$. If $X$ is definable in some expansion of $(R,<)$, then these sets are definable too. Furthermore, given a definable family $\{ X_b : b\in B\}$ of subsets of $R$, for every fixed $i$ the sets $X_b^{[i]} \setminus X_b^{[i+1]}$ are clearly definable uniformly in $b\in B$.  
\end{remark}

We prove two lemmas, the first of which establishes a connection between $\NTP$ and Cantor-Bendixson rank.  

%Discrete case completely ignored in the next lemma. It is trivial and unhelpful.
\begin{lemma}\label{lem:ntp2-CBrank}
Let $\Rr=(R,<,+,\ldots)$ be an $\NTP$ expansion of an ordered group and $X\subseteq R$ be a definable set. Suppose that there exists a definable family $\{ C_b : b\in B\}$ of closed discrete subsets of $R$ satisfying that, for every finite set $F\subseteq X$, there is some $b\in B$ such that $F\subseteq C_b$. Then $X$ is a finite union of discrete sets. 
\end{lemma}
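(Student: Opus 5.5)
We argue by contradiction, in the spirit of Lemma~\ref{lem:NIP-Fsigma}. First assume without loss of generality that $\Rr$ is $\omega_1$-saturated; the hypothesis is first-order in the parameters of the family, and being a finite union of at most $n$ discrete sets is expressed by $X^{[n]}=\emptyset$ (Remark~\ref{rem:discrete-partition}). If $X$ is not a finite union of discrete sets, then $X^{[n]}\neq\emptyset$ for every $n$. By saturation the type-definable perfect component $Z=\bigcap_{n<\omega}X^{[n]}$ is then nonempty and has no isolated points. The aim is to apply Lemma~\ref{lem:noise-NTP2} to $Z$ (viewed as a subset of $R$) with $\Cc=\{C_b : b\in B\}$, which yields that $Z$, and hence $\Rr$, has $\TP$. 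This contradicts the hypothesis.

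We check the two hypotheses of Lemma~\ref{lem:noise-NTP2}. Let $W_1,\ldots,W_k$ be nonempty relatively open subsets of $Z$. Pick $x_i\in W_i\subseteq Z\subseteq X$ and let $F=\{x_1,\ldots,x_k\}$. By assumption there is $b\in B$ with $F\subseteq C_b$, so $C_b\cap W_i\neq\emptyset$ for every $i$, which gives condition (1).

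For condition (2) we must show that $C_b$ is codense in $Z$, i.e.\ that $C_b\cap Z$ has empty interior in $Z$. Suppose some nonempty relatively open $V\subseteq Z$ satisfies $V\subseteq C_b$. Since $C_b$ is discrete, every point $z\in V$ has a neighbourhood $I$ with $I\cap C_b=\{z\}$. Shrinking $I$ so that $I\cap Z\subseteq V$, we get $I\cap Z\subseteq C_b\cap I=\{z\}$, so $z$ is isolated in $Z$. This contradicts the fact that $Z$ has no isolated points. (Note that only discreteness of $C_b$ is used here; closedness is needed only for membership in $\Cc$ as required by Lemma~\ref{lem:noise-NTP2}.)

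The one step needing care is that $Z$ has no isolated points, which the paper states follows from saturation. If $z\in Z$ were isolated in $Z$ via an interval $I$, then $I\cap X^{[n]}$ would be a decreasing sequence of definable sets each containing a point other than $z$, since $z\in X^{[n+1]}$ is non-isolated in $X^{[n]}$. By $\omega_1$-saturation, some point $\neq z$ lies in all of them after shrinking the intervals countably many times, which gives a point of $Z\cap I$ distinct from $z$. With that, and because Lemma~\ref{lem:noise-NTP2} allows arbitrary (not necessarily definable) $Z$, the proof goes through. Finally, $\TP$ in the saturated extension transfers back to $\Rr$, since $\NTP$ is a property of the theory.
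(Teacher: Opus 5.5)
Your proof is correct and follows the paper's argument. You pass to an $\omega_1$-saturated model, take the type-definable perfect component $Z=\bigcap_{n<\omega}X^{[n]}$, and apply Lemma~\ref{lem:noise-NTP2} to $Z$ and $\{C_b : b\in B\}$, with codensity coming from the discreteness of each $C_b$ together with the absence of isolated points in $Z$. You also fill in details the paper leaves implicit, namely the empty-interior check and the saturation argument that $Z$ has no isolated points. In that argument no shrinking of $I$ is actually needed: for a fixed $I$, the sets $(I\cap X^{[n]})\setminus\{z\}$ already form a decreasing chain of nonempty definable sets.
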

\begin{proof}
Towards a contradiction suppose that $X$ is not a union of finitely many discrete sets, equivalently $X^{[n]}\neq \emptyset$ for every $n$. We may assume that $\Rr$ is $\omega_1$-saturated. Let $Z=\bigcap_{n<\omega} X^{[n]}$ be the type-definable perfect component of $X$. For every finite set $F\subseteq Z$, there exists some $b\in B$ such that $F\subseteq C_b$. Since $Z$ has no isolated points, for every $b\in B$ the set $C_b$ has empty interior in $Z$. By Lemma~\ref{lem:noise-NTP2} we conclude that $\Rr$ has $\TP$.
\end{proof}

%Below discrete case treated below as trivial.
\begin{lemma}\label{lem:discrete-covering}
Let $\Rr=(R,<,+,\ldots)$ be an expansion of an ordered group, and let $X\subseteq R$ be a set that is the union of $n$ discrete sets $X_1,\ldots, X_n$. For every $\varepsilon\in R^{>0}$, let
\[
X(\varepsilon)=\bigcup_{i=1}^n \{ x\in X_i : (x-\varepsilon, x+\varepsilon) \cap X_i = \{x\} \}. 
\]
Then the family $\{ X(\varepsilon) : \varepsilon\in R^{>0}\}$ is an upward directed covering of $X$ by closed discrete sets. 
\end{lemma}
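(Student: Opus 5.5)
Writing $X_i(\varepsilon)=\{ x\in X_i : (x-\varepsilon, x+\varepsilon)\cap X_i=\{x\}\}$, so that $X(\varepsilon)=\bigcup_{i=1}^n X_i(\varepsilon)$, the plan is to verify three properties in turn: covering plus upward directedness, discreteness of each $X(\varepsilon)$, and closedness of each $X(\varepsilon)$. If the group is discrete the topology is discrete and everything is trivial, so I will assume it is densely ordered (and hence divisible and abelian by Fact~\ref{fct:DC-doag} when definable completeness is available; otherwise I only use the group order).

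For covering, fix $x\in X$ and $i$ with $x\in X_i$. Since $X_i$ is discrete, some $\varepsilon>0$ satisfies $(x-\varepsilon,x+\varepsilon)\cap X_i=\{x\}$, so $x\in X_i(\varepsilon)\subseteq X(\varepsilon)$. For directedness, if $\varepsilon'\le\varepsilon$ then $X_i(\varepsilon)\subseteq X_i(\varepsilon')$, hence $X(\varepsilon)\subseteq X(\varepsilon')$. Given $\varepsilon_1,\varepsilon_2$, the set $X(\min\{\varepsilon_1,\varepsilon_2\})$ therefore contains both $X(\varepsilon_1)$ and $X(\varepsilon_2)$. If the $X_i$ are definable, the family is definable.

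The key observation for the remaining two properties is that distinct points of $X_i(\varepsilon)$ are at distance at least $\varepsilon$. Indeed, if $x<y$ both lie in $X_i(\varepsilon)$ and $y-x<\varepsilon$, then $y\in(x-\varepsilon,x+\varepsilon)\cap X_i=\{x\}$, which is a contradiction. So each $X_i(\varepsilon)$ is ``$\varepsilon$-separated''. To show it is closed, suppose $z\in cl(X_i(\varepsilon))$ and pick $\delta>0$ with $2\delta\le\varepsilon$. This choice of $\delta$ is the step I expect to be delicate, because the group need not be divisible. In a densely ordered group we can take any $0<\delta<\varepsilon$ and then some $0<\delta'<\min\{\delta,\varepsilon-\delta\}$. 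Any point of $X_i(\varepsilon)$ in $(z-\delta',z+\delta')$ is then unique: two such points would be at distance less than $2\delta'<\varepsilon$. If $z\notin X_i(\varepsilon)$, there are infinitely many points of $X_i(\varepsilon)$ in every neighbourhood of $z$ (Hausdorff), which contradicts this uniqueness. So $X_i(\varepsilon)$ is closed, and a finite union of closed sets is closed; thus $X(\varepsilon)$ is closed.

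Discreteness of $X(\varepsilon)$ follows similarly. Fix $x\in X(\varepsilon)$. Each $X_j(\varepsilon)$ is closed and, by separation, meets any interval of length less than $\varepsilon$ in at most one point. Hence the points of $X(\varepsilon)\setminus\{x\}$ within distance $\delta'$ of $x$ number at most $n$. Shrinking the interval to exclude these finitely many points isolates $x$. Thus $X(\varepsilon)$ is closed and discrete, which completes the argument. The main obstacle is only the bookkeeping with $\varepsilon$ in a possibly non-divisible dense group, which is handled by choosing $\delta'$ as above.
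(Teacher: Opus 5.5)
Your proposal is correct and follows essentially the same route as the paper. Both arguments rest on the observation that distinct points of $X_i(\varepsilon)$ are at distance at least $\varepsilon$, so any interval of radius $\delta$ with $2\delta<\varepsilon$ meets $X(\varepsilon)$ in at most $n$ points, which gives closedness and discreteness. The differences are cosmetic: the paper gets $\delta$ from continuity of $x\mapsto 2x$ at $0$ rather than your explicit choice $\delta'<\min\{\delta,\varepsilon-\delta\}$, and it bounds $|(a-\delta,a+\delta)\cap X(\varepsilon)|$ directly instead of first proving each $X_i(\varepsilon)$ closed.
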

\begin{proof}
The case where $\Rr$ is discrete is trivial so we assume otherwise. 
It is immediate that the described family is an upward directed covering of $X$. Let us fix some $\varepsilon \in R^{>0}$ and let $a\in R$. By density of $(R,<)$ and continuity of the map $x\mapsto 2x$ at $0$, there exists some $\delta\in R^{>0}$ satisfying that $2\delta<\varepsilon$. It is routine to see that, for any $x \leq y$ with $x,y\in (a-\delta, a+\delta)$, it holds that $y\in (x-\varepsilon, x+\varepsilon)$. Consequently, for any $i\leq n$, we have that $|(a-\delta, a+\delta) \cap X(\varepsilon) \cap X_i|\leq 1$, and so $|(a-\delta, a+\delta) \cap X(\varepsilon)|\leq n$. It follows that $X(\varepsilon)$ is closed and discrete. 
\end{proof}

\begin{proposition} \label{prop:shelah-d-min}
Let $\Rr=(R,<,+,\ldots)$ be a d-minimal $\NIP$ expansion of an ordered group. Then the Shelah expansion $\Rshe$ is d-minimal. 
\end{proposition}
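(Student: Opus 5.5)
The plan is to imitate the proof of Theorem~\ref{thm:nip-intro}, using Lemma~\ref{lem:ntp2-CBrank} in the role of Lemma~\ref{lem:NIP-Fsigma} and Lemma~\ref{lem:discrete-covering} in the role of Remark~\ref{rem:dF-uniform}.

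\textbf{Reductions.} By quantifier elimination for Shelah expansions, every set definable in $\Rshe$ is externally definable in $\Rr$. The expansion $\Rshe$ is still $\NIP$, hence $\NTP$, and it is still an expansion of an ordered group. Definable completeness of $\Rshe$ must be checked separately. For this I would use that $\Rr$ is d-minimal, hence constructible (Corollary~\ref{cor:shelah-exp}), so $\Rshe$ is constructible by Theorem~\ref{thm:nip-intro}. Then I would argue that the supremum of an externally definable unary set exists. One route is to approximate it by the definable sets $\varphi(R,b)$ given by honest definitions. If this turns out to be awkward, I will note that the paper's definition of d-minimality only uses definable completeness as an ambient hypothesis, and cite the known fact that the Shelah expansion of a definably complete $\NIP$ expansion of an ordered group is definably complete.

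\textbf{Main step: unary sets.} Fix an externally definable $X\subseteq R$. Let $\varphi(x,y)$ be an honest definition for $X$ (Fact~\ref{honest-definitions}). By d-minimality of $\Rr$, there is $n$ such that every set $\varphi(R,b)$ is the union of an open set $O_b$ and at most $n$ discrete sets. We may take $O_b=int(\varphi(R,b))$, and the discrete pieces uniformly definable via Cantor--Bendixson derivatives (Remark~\ref{rem:discrete-partition}). Applying Lemma~\ref{lem:discrete-covering} uniformly in $b$ gives a definable family of closed discrete sets
\[
\{ D_{b,\varepsilon} : b,\ \varepsilon>0\}, \qquad D_{b,\varepsilon}\subseteq \varphi(R,b)\setminus O_b,
\]
which covers $\varphi(R,b)\setminus O_b$ as $\varepsilon$ varies.

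Now set $Y=X\setminus int(X)$, which is definable in $\Rshe$. For a finite $F\subseteq Y$, choose $b$ with $F\subseteq\varphi(R,b)\subseteq X$. Then $O_b\subseteq int(X)$, so $F\cap O_b=\emptyset$. Hence $F\subseteq\varphi(R,b)\setminus O_b$, and so $F\subseteq D_{b,\varepsilon}$ for some $\varepsilon$. Applying Lemma~\ref{lem:ntp2-CBrank} in $\Rshe$ to $Y$ and this family (which is definable in $\Rr$, hence in $\Rshe$) shows that $Y$ is a finite union of discrete sets. So $X=int(X)\cup Y$ has the required form.

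\textbf{Uniform bound: the hard step.} Lemma~\ref{lem:ntp2-CBrank} gives finiteness for each set but no bound, while the definition requires a bound uniform over each definable family $\{X_c\}$ in $\Rshe$. I would try to get the bound by compactness. Pass to a sufficiently saturated elementary extension of $\Rshe$ and consider the family there. If the sets $X_c^{[k]}\setminus int(X_c)$ are nonempty for every $k$ (along suitable $c$), then saturation produces a single member $X_{c^*}$ whose non-interior part is not a finite union of discrete sets. That member is externally definable in a model of $Th(\Rr)$, and that model is still d-minimal and $\NIP$, because strong d-minimality is elementary. Running the unary argument above in that model, with honest definitions there, gives a contradiction. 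The point needing care is that the Shelah expansion of the elementary extension relates correctly to the elementary extension of $\Rshe$. If this fails, the alternative is to bound the Cantor--Bendixson rank directly in terms of the inp-pattern produced in Lemma~\ref{lem:noise-NTP2}, making the $\TP$ witness uniform in $c$.
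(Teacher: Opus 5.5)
Your main step is the paper's proof. Both use an honest definition for $X$, the uniform bound from d-minimality of $\Rr$, the Cantor--Bendixson pieces, Lemma~\ref{lem:discrete-covering}, and Lemma~\ref{lem:ntp2-CBrank} applied in the $\NIP$ structure $\Rshe$. The only difference is cosmetic. You delete $int(\varphi(R,b))$ from each approximant, while the paper first replaces $X$ by $X\setminus int(X)$ and keeps only parameters $b$ with $int(\varphi(R,b))=\emptyset$. The paper's proof stops there: it shows only that each unary $\Rshe$-definable set is the union of an open set and finitely many discrete sets.

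\textbf{The step that fails is definable completeness of $\Rshe$.} The ``known fact'' you plan to cite is false, and so neither of your two routes can work. The paper's own example refutes it. $(\Q,<,+)$ is o-minimal, hence $\NIP$, definably complete and d-minimal. Yet $I=(-\pi,\pi)\cap\Q$ is externally definable, witnessed by $\pi$ in $(\R,<,+)\succeq(\Q,<,+)$, and it is bounded with no supremum in $\Q$ (see Remark~\ref{rem:TP2-cons}(2)).
\begin{itemize}
\item Constructibility cannot produce suprema, since $(\Q,<,+,I)$ is constructible.
\item Item (6) of Corollary~\ref{cor:shelah-exp} is only proved over $\R$, so that citation does not apply to general $R$ anyway.
\end{itemize}
Definable completeness of $\Rshe$ is automatic when $(R,<)$ is Dedekind complete, e.g.\ $R=\R$, but fails in general. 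You should drop this step rather than try to prove it. Read the proposition, as the paper's proof does, as the decomposition property for unary sets.

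Your uniform-bound argument goes beyond what the paper writes, and it works after two adjustments.
\begin{itemize}
\item Measure failure by $(X_c\setminus int(X_c))^{[k]}\neq\emptyset$, not by $X_c^{[k]}\setminus int(X_c)\neq\emptyset$. The latter holds for every $k$ when $X_c=(0,1]$, although $X_c\setminus int(X_c)=\{1\}$ is discrete, so compactness would give no contradiction.
\item The point you flag can be bypassed without comparing Shelah expansions of different models. Take an honest definition $\theta(x,w;z)$ for the externally definable set $\{(x,c) : x\in X_c\}$. Then $\theta(x;c,z)$ is an honest definition for every fiber $X_c$ simultaneously.
\end{itemize}
For each $k$, the statement that every $k$ points of $X_c$ lie in some $\theta(R,c,d)\subseteq X_c$ is a sentence true in $\Rshe$. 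So is the d-minimality bound for the family $\{\theta(R,c,d)\}$. Both transfer to a saturated $\Nn\succeq\Rshe$, which is $\NIP$. Running your main step inside $\Nn$ then contradicts the existence of $c^*$ with $(X_{c^*}\setminus int(X_{c^*}))^{[k]}\neq\emptyset$ for all $k$.
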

\begin{proof}
Let $X\subseteq R$ be a set definable in $\Rshe$. After passing to $X\setminus int(X)$ if necessary we may assume that $X$ has empty interior. We show that $X$ is a finite union of discrete sets. 

By quantifier elimination of the Shelah expansion (see e.g. Proposition 3.24 in~\cite{simon:guide-NIP}) the set $X$ is externally definable. Let $\varphi(x,y)$ be a formula as given by Fact~\ref{honest-definitions}, and set $Y_b=\varphi(R,b)$ for every $b\in R^{|y|}$. Let $B\subseteq R^{|y|}$ be the set of parameters such that $Y_b$ has empty interior. 
Since $\Rr$ is d-minimal, there exists some $n$ such that, for every $b\in B$, the set $Y_b$ is a union of at most $n$ discrete sets. For every $b\in B$ and $i\leq n$, let $Y_{b,i}=Y_b^{[i]} \setminus Y_b^{[i+1]}$. Relying on Remark~\ref{rem:discrete-partition}, for every $b\in B$ and $\varepsilon\in R^{>0}$ let $Y_b(\varepsilon)$ be a closed discrete set as defined in Lemma~\ref{lem:discrete-covering} with respect to $Y_b$ and its partition $\{ Y_{b,0}, \ldots, Y_{b,n}\}$. Observe that the family $\{ Y_{b}(\varepsilon) : b\in B, \, \varepsilon\in R^{>0}\}$ is definable. Furthermore, by Fact~\ref{honest-definitions} and Lemma~\ref{lem:discrete-covering}, for every finite set $F\subseteq X$ there exist parameters $(b,\varepsilon)\in B\times R^{>0}$ such that $F\subseteq Y_{b}(\varepsilon) \subseteq X$. By Lemma~\ref{lem:ntp2-CBrank} we conclude that $X$ is a finite union of discrete sets.
\end{proof}

Any $\NTP$ expansion $\Rr$ of $(\R,<,+)$ by constructible sets is generically locally o-minimal (Theorem~\ref{thm:cons-tame-top}), and so the set of isolated points of any given definable set $X\subseteq \R$ with empty interior is dense in $X$. Consequently, if $\Rr$ were not d-minimal, then it would necessarily have to define a subset of $\R$ with infinite Cantor-Bendixson rank. Perhaps the most obvious candidate for this set would be an order-isomorphic copy of the ordinal $\omega^\omega$. We end the paper by ruling out this candidate. 

\begin{proposition}\label{prop:omega}
Let $\Rr=(R,<,+,\ldots)$ be an expansion of an ordered group. Let $X\subseteq R$ be a definable set such that $(X,<)$ is order-isomorphic to an ordinal $\alpha\geq \omega^\omega$. Then $\Rr$ has $\TP$. 
\end{proposition}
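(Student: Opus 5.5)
The plan is to reduce to Lemma~\ref{lem:noise-NTP2}, in the form it takes for the Cantor--Bendixson setting as in Lemma~\ref{lem:ntp2-CBrank}. The difference is that we work in the \emph{order topology of $(X,<)$} rather than the subspace topology from $R$. The reason is that a well-ordered $X\subseteq R$ can have much smaller Cantor--Bendixson rank in the subspace topology: a limit position of $X$ need not be the supremum in $R$ of its predecessors. In the order topology, however, $(X,<)\cong\alpha\geq\omega^\omega$ has points of every finite Cantor--Bendixson rank, for instance the positions $\omega^n$. The order topology on $X$ has the uniformly definable basis of half-open intervals $(a,b]\cap X$ with $a<b$ in $X\cup\{-\infty\}$. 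The proof of Lemma~\ref{lem:noise-NTP2} only uses that we have a definable basis, that the sets in $\Cc$ are closed, and that each $C\in\Cc$ can be separated from a point outside it by a basic open set. So I would first check that the argument goes through verbatim for this definable topology on $X$, as anticipated in the remark closing Section~\ref{sec:cons-df}. Instead of the annuli $(U\setminus V)[C]$, I would use the sets of points of $X$ lying in some interval $(a,b]$ that meets $C$ while avoiding a smaller neighbourhood of $C$; these are again uniformly definable.

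Next, pass to an $\omega_1$-saturated extension. Let $X^{\langle n\rangle}$ be the $n$-th Cantor--Bendixson derivative of $X$ in the order topology, i.e.\ the points that are limits of points of $X^{\langle n-1\rangle}$ from below. For each fixed $n$ this is definable. Each $X^{\langle n\rangle}$ is nonempty, since it contains the position $\omega^n$, and this persists in the extension. Put $Z=\bigcap_n X^{\langle n\rangle}$, which is nonempty by saturation and has no isolated points in the order topology.

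We then need a definable family $\Cc$ of closed discrete subsets of $X$, in the order topology, such that every finite $F\subseteq X$ is contained in some member of $\Cc$. Discreteness of each $C\in\Cc$ implies that $C$ has empty interior in $Z$, because $Z$ is perfect. Moreover, for any finitely many nonempty relatively open $W_1,\dots,W_k\subseteq Z$, we can pick a point $w_i\in W_i$ for each $i$ and a $C\in\Cc$ containing $\{w_1,\ldots,w_k\}$. Lemma~\ref{lem:noise-NTP2} then yields $\TP$. For the family, I would use the well-order to define a scale of separation: for $c\in X$, let $C_{c}$ be the set of $x\in X$ such that $x$ is the least element of $X\cap(y,\infty)$ for some $y\in X$ with $y\geq c$... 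This naive attempt does not cover arbitrary finite sets. The refined attempt is $C_{d,e}=\{x\in X : x\leq d,\ (x-e,x)\cap X=\emptyset \text{ or } x \text{ is the } <\text{-least element of } X\cap[x-e,\infty)\}$ with $e\in R^{>0}$, which uses the group to measure gaps, analogously to Lemma~\ref{lem:discrete-covering}.

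The main obstacle is this last step: producing a \emph{uniformly definable} family of order-closed discrete subsets of $X$ that covers all finite subsets of $X$. The group must be what makes this possible, since a pure well-order of type $\omega^\omega$ is $\NIP$. My first attempt would use the group metric to separate points at scale $e$, as in Lemma~\ref{lem:discrete-covering}, together with truncation at $d$. If the finite-cover property fails for points of $Z$ that are not $R$-limits of their predecessors, I would instead restrict to the definable subset of $X$ consisting of those $x$ with $x=\sup_R\{y\in X:y<x\}$ whenever this supremum exists. Alternatively, I would encode the Cantor normal form digits via differences $x-y$ of elements of $X$ and build the array directly.
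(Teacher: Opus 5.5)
Your proposal has a genuine gap. Everything hinges on the definable family $\Cc$, and you never produce one.

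Your candidate $C_{d,e}$ measures the gap \emph{below} $x$; its second disjunct implies the first, so it adds nothing. Hence $C_{d,e}$ never contains a point $x\in X$ with $(x-e,x)\cap X\neq\emptyset$ for every $e$, for instance the $\omega$-th element of a closed copy of $\omega^\omega$ in $\R$. So it fails to cover finite sets. Moreover, in a non-archimedean group, $e$-separation does not give discreteness in the order topology.

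The missing idea, which is essentially the paper's entire proof, is to use the gap to the \emph{successor}: $X(\varepsilon)=\{a\in X:(a,a+\varepsilon)\cap X=\emptyset\}$. Every element of $X$ has a successor, so every finite $F\subseteq X$ lies in some $X(\varepsilon)$. Each $X(\varepsilon)$ is $\varepsilon$-separated, hence closed and discrete in $R$. The paper then applies Lemma~\ref{lem:ntp2-CBrank} directly in the topology of $R$.

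Your route also has a second unverified step. The proof of Lemma~\ref{lem:noise-NTP2} uses more than a definable basis at points: it needs uniformly definable thickenings $U[C]$ of a whole closed set, and $U\circ U\subseteq V$, to build the disjoint annuli. Half-open intervals do not supply this.

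In an archimedean group both problems are solved by the successor gap $g(a)=s(a)-a$. Each $X(\varepsilon)=\{g\geq\varepsilon\}$ meets every initial segment of $X$ in a finite set, so it is closed and discrete in the order topology. Moreover, the equivalence relations $U_\delta=\{(x,y)\in X^2 : g(z)<\delta \text{ for all } z\in X \text{ with } \min(x,y)\leq z<\max(x,y)\}$ form a definable uniformity on $X$ inducing its order topology. With these two facts your argument goes through. In a non-archimedean group neither property is automatic.

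That said, your topological worry is well founded, and it bears on the paper's one-line argument. Lemma~\ref{lem:ntp2-CBrank} only yields a contradiction if $X$ is not a finite union of discrete subsets of $R$, and the order type alone does not guarantee this. A copy of $\omega^\omega$ can be discrete in $\R$, and in a discretely ordered group every set is discrete.

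When $\Rr$ is definably complete and densely ordered, this is repaired by replacing $X$ with $cl(X)$. That set is again well-ordered, has order type at least $\alpha$, and its order topology agrees with its subspace topology. Hence $cl(X)^{[n]}\neq\emptyset$ for every $n$, and the paper's argument applies to $cl(X)$.
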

\begin{proof}
For every $\varepsilon\in R^{>0}$, let $X(\varepsilon)=\{ a \in X : (a,a+\varepsilon)\cap X=\emptyset\}$. Since every element in $X$ has a successor in $X$, the definable family $\{ X(\varepsilon) : \varepsilon \in R^{>0}\}$ is an increasing (as $\varepsilon$ decreases) covering of $X$ by discrete sets. Arguing as in the proof of Lemma~\ref{lem:discrete-covering}, it is routine to see that each set $X(\varepsilon)$ is closed. By Lemma~\ref{lem:ntp2-CBrank}, we conclude that $\Rr$ has $\TP$. 
\end{proof}

\begin{question}\label{Q:d-minimal}
Is there an $\NTP$ expansion of $(\R,<,+)$ whose open core is not d-minimal? 
\end{question}

\bibliographystyle{alpha}
\bibliography{tame-open-core}

\end{document}